\let\pa=\partial
\let\p=\partial
\let\ve=\varepsilon
\let\e=\varepsilon
\let\al=\alpha
\let\f=\frac
\let\D=\Delta
\let\wt=\widetilde
\let\oo=\infty
\def\no{\noindent}
\def\na{\nabla}
\def\th{\theta}
\def\dive{\mathop{\rm div}\nolimits}
\def\diveh{\mathop{\rm div}_\h\nolimits}
\def\cA{{\mathcal A}}
\def\cB{{\mathcal B}}
\def\cG{{\mathcal G}}
\def\cH{{\mathcal H}}
\def\Om{\Omega}
\def\eqdef{\buildrel\hbox{\footnotesize def}\over =}
\def\eqdefa{\buildrel\hbox{\footnotesize def}\over =}
\def\N{\mathop{\mathbb N\kern 0pt}\nolimits}
\def\Q{\mathop{\mathbb Q\kern 0pt}\nolimits}
\def\R{\mathop{\mathbb R\kern 0pt}\nolimits}
\def\Z{\mathop{\mathbb Z\kern 0pt}\nolimits}
\newcommand{\beq}{\begin{equation}}
\newcommand{\eeq}{\end{equation}}
\newcommand{\ben}{\begin{eqnarray}}
\newcommand{\een}{\end{eqnarray}}
\newcommand{\beno}{\begin{eqnarray*}}
\newcommand{\eeno}{\end{eqnarray*}}
\newcommand{\andf}{\quad\hbox{and}\quad}
\newcommand{\with}{\quad\hbox{with}\quad}
\newtheorem{thm}{Theorem}[section]
\newtheorem{lem}{Lemma}[section]
\newtheorem{rmk}{Remark}[section]
\newtheorem{cor}{Corollary}[section]
\newtheorem{prop}{Proposition}[section]
\newcommand{\vv}[1]{\boldsymbol{#1}}
\def\ur{u^r}
\def\ut{u^\theta}
\def\uz{u^z}
\def\wr{w^r}
\def\wwt{w^\theta}
\def\wz{w^z}
\def\vpr{\varpi^r}
\def\vpt{\varpi^\theta}
\def\vpz{\varpi^z}
\def\vr{v^r}
\def\vt{v^\theta}
\def\vz{v^z}
\def\fr{f^r}
\def\ft{f^\theta}
\def\fz{f^z}
\def\gr{g^r}
\def\gt{g^\theta}
\def\gz{g^z}
\def\h{{\rm h}}
\def\v{{\rm v}}
\def\nablah{\nabla_\h}
\numberwithin{equation}{section}
\begin{document}
\title[Global stability of large Fourier mode for 3-D N-S]
{Global stability of large Fourier mode for 3-D anisotropic Navier-Stokes equations in cylindrical domain}

\author[N. Liu]{Ning Liu}
\address[N. Liu]
{Academy of Mathematics $\&$ Systems Science, Chinese Academy of Sciences, Beijing 100190, CHINA.}
\email{liuning16@mails.ucas.ac.cn}

\author[Y. Liu]{Yanlin Liu}
\address[Y. Liu]{School of Mathematical Sciences,
Laboratory of Mathematics and Complex Systems,
MOE, Beijing Normal University, 100875 Beijing, China.}
\email{liuyanlin@bnu.edu.cn}

\author[P. Zhang]{Ping Zhang}
\address[P. Zhang]{Academy of Mathematics $\&$ Systems Science
	and Hua Loo-Keng Center for Mathematical Sciences, Chinese Academy of
	Sciences, Beijing 100190, CHINA, and School of Mathematical Sciences,
	University of Chinese Academy of Sciences, Beijing 100049, China.} \email{zp@amss.ac.cn}

\date{\today}

\maketitle

\begin{abstract} In this paper, we first establish the global existence and stability of solutions to 3-D classical Navier-Stokes equations
$(NS)$ in an
infinite cylindrical domain with large Fourier mode initial data. Then we extend similar result for 3-D anisotropic Navier-Stokes equations
$(ANS).$ We remark that due to the loss of vertical viscosity  in $(ANS),$ the construction of the energy functionals
for $(ANS)$ is much more subtle than that of $(NS).$
Compared with our previous paper \cite{LZ6} for $(NS)$, we improve the polynomial decay in $k$ for the Fourier coefficients of the solution
 to be exponential decay in
$k$ here.
\end{abstract}

Keywords: Navier-Stokes equations, anisotropic, cylindrical domain, global well-posedness, global stability, large Fourier mode.

\vskip 0.2cm
\noindent {\sl AMS Subject Classification (2000):} 35Q30, 76D03  \


\setcounter{equation}{0}
\section{Introduction}

 In this paper, we  investigate the global well-posedness and stability of the
 following   3-D incompressible (anisotropic) Navier-Stokes equations
  in the infinite cylindrical domain $\Omega\eqdef\left\{x=(x_1,x_2,x_3)\in \R^3: x_1^2+x_2^2< 1,\ x_3\in\R\  \right\}$ with large Fourier mode initial data:
\beq\label{NS}
\left\{
\begin{aligned}
&\p_t u+ u\cdot\nabla u-(\Delta_\h+\nu^2\pa_3^2) u+\nabla P= 0,
\qquad(t,x)\in\R^+\times\Omega,\\
&\dive u=0,\\
&u|_{\pa\Omega}=0,
\quad u|_{t=0} =u_{\rm in},
\end{aligned}
\right.
\eeq
where $\Delta_\h\eqdefa\pa_{x_1}^2+\pa_{x_2}^2$ denotes the horizontal Laplacian operator, $u$ and $P$
 stands for the velocity and the scalar pressure function of a viscous incompressible fluid respectively,  and $\nu\geq0$ designates the vertical viscous coefficient.

Equations of this type appear in geophysical fluid dynamics (see \cite{CDGGbook, Pedlovsky} for instance),
where meteorologists often simulate turbulent diffusion by putting
viscosity of the form: $-\mu_\h\Delta_\h-\mu_3\pa_3^2$,
and the empirical constant $\mu_3$ is usually much smaller than  the empirical constant $\mu_\h$.
We refer to the book of Pedlovsky \cite{Pedlovsky},
Chapter $4$ for a  complete discussion about this model. Here for simplicity, we just take $\mu_\h=1$ and $\mu_3=\nu,$
so that the system \eqref{NS} restores the classical (resp. anisotropic) Navier-Stokes equations, in short $(NS)$ (resp. $(ANS)$), in the case $\nu=1$ (resp. $\nu=0$).

\smallskip

For $(NS)$, Leray proved in the seminal paper \cite{lerayns} that for any solenoidal vector field $u_{\rm in}$ in $H^1(\R^3)$,
 the system $(NS)$ has a unique local strong solution $u\in C([0,T]; H^1(\R^3))\cap L^2(]0,T[,\dot H^2(\R^3))$, which
  becomes  analytic \cite{FT,HS}, for any time $t\in]0,T]$. Later, Kiselev and Ladyzhenskaya \cite{KL57} established the same results in bounded domain. However, whether or not this local smooth solution blows up in finite time is still a fundamental open problem in the field of mathematical fluid mechanics (see \cite{CC88, Fe06, Tao} for instance).

Yet for initial data with some special structures, there are affirmative answers to the above question. The most widely studied case is perhaps the axially symmetric solutions of $(NS),$ which reads
$$
u(t,x)=u^r(t,r,z)\vv e_r+u^\theta(t,r,z)\vv e_\theta+u^z(t,r,z)\vv e_z,
$$
where $(r,\theta,z)$ denotes the cylindrical coordinates in $\R^3$
so that $x=(r\cos\theta,r\sin\theta,z)$, and
$$\vv e_r\eqdefa (\cos\theta,\sin\theta,0),\ \vv e_\theta\eqdefa (-\sin\theta,\cos\theta,0),\
\vv e_z\eqdefa (0,0,1),\ r\eqdefa \sqrt{x_1^2+x_2^2}.$$
We call such a solution $u$ of $(NS)$ to be
axially symmetric without swirl  if in addition $\ut=0$.

For the axially symmetric initial data without swirl, Ladyzhenskaya \cite{La} and independently Ukhovskii and Yudovich \cite{UY} proved the global existence of strong solution to $(NS)$, which keeps this symmetric property. Later Leonardi et al. \cite{LMNP} gave a simplified proof to this result.
 Nevertheless, the global existence of strong solutions to $(NS)$ with general axially symmetric data is still open except for the case when $u^\th_{\rm in}$ is sufficiently small. The strategy of the proof is based on a perturbation argument for the  swirl free case.
There are numerous works concerning this topic,
here we only list the references \cite{CL02, Yau1, Yau2, KNSS, Lei, Liu, LZ}.

\smallskip

 Whether or not we can  use some  geometric symmetry property of $(NS)$ to study
 the general solutions of $(NS)$ seems to be an interesting question. Let us write the solution $u$ of $(NS)$
 in cylindrical coordinates as
\begin{equation}\label{ucylin}
u(t,x)=u^r(t,r,\th,z)\vv e_r+u^\theta(t,r,\th,z)\vv e_\theta+u^z(t,r,\th,z)\vv e_z.
\end{equation}
Different from the axially symmetric case, here $\ur,~\ut$ and $\uz$ may vary with respect to the
$\th$ variable. Notice that in cylindrical coordinates, one has
\beq\label{coordinate}
\na_x=\vv e_r\p_r+\vv e_\theta\f{\pa_\th}{r}+\vv e_z\p_z,
\quad \D_x=\p_r^2+\f{\p_r}{r}+\f{\p_\theta^2}{r^2}+\p_z^2,
\eeq
so that we can equivalently reformulate the system \eqref{NS} in cylindrical coordinates as
\begin{equation}\label{eqtu}
\left\{
\begin{aligned}
& D_t u^r-\bigl(\pa_r^2+\nu^2\pa_z^2+\f{\pa_r}{r}
+\f{\pa_\th^2}{r^2}-\f1{r^2}\bigr)u^r
-\f{\left(u^\theta\right)^2}{r}
+2\f{\pa_\th u^\theta}{r^2}+\pa_r P=0,\\
& D_t u^\theta-\bigl(\pa_r^2+\nu^2\pa_z^2+\f{\pa_r}{r}
+\f{\pa_\th^2}{r^2}-\f1{r^2}\bigr)u^\theta
+\f{u^r u^\theta}{r}-2\f{\pa_\th u^r}{r^2}
+\f{\pa_\th P}{r}=0,\\
& D_t u^z-\bigl(\pa_r^2+\nu^2\pa_z^2+\f{\pa_r}{r}
+\f{\pa_\th^2}{r^2}\bigr) u^z+\pa_z P=0,\\
& \pa_r u^r+\frac{u^r}{r}+\pa_z u^z
+\f{\pa_\th u^\theta}{r}=0,\quad (t,r,z)\in\R^+\times\wt\Omega,\\
&(u^r,u^\theta,u^z)|_{r=1}=0,
\quad(u^r,u^\theta,u^z)|_{t=0} =(u_{\rm in}^r,u^\theta_{\rm in},u^z_{\rm in}),
\end{aligned}
\right.
\end{equation}
where $D_t\eqdef\pa_t+ u\cdot\nabla_x=\pa_t+\bigl(u^r\pa_r+\f{u^\theta}{r}\pa_\th
+u^z\pa_z\bigr)$ denotes the material derivative and $\wt\Om\eqdefa  \bigl\{ (r,z)\in [0,1[\times\R \bigr\}.$

Concerning this question, lately the last two authors of this paper made some progresses  in \cite{LZ6}.
 In particular, the authors of \cite{LZ6}  investigate $(NS)$ in the whole space $\R^3$ and with initial data
\begin{equation}\label{1.5}
	\left\{
	\begin{split}
& \ur_{\rm in}(x)=a^r(r,z)\cos N\th+b^r(r,z)\sin N\th,\\
& \ut_{\rm in}(x)=N^{-1}\bigl(a^\th(r,z)\cos N\th
+b^\th(r,z)\sin N\th\bigr),\\
& \uz_{\rm in}(x)=a^z(r,z)\cos N\th+b^z(r,z)\sin N\th,
	\end{split}
	\right.
\end{equation}
where the profiles $\vv\al\eqdef(a^r,a^\th,a^z,b^r,b^\th,b^z)$ are axially symmetric functions satisfying
\begin{equation}\label{1.6}
\vv\al\in\bigl\{f(r,z):\
\bigl\|r^{\f12}f\bigr\|_{L^6(\R^3)}+\|f\|_{L^2(\R^3)}
+\bigl\|(\pa_rf,\pa_zf)\bigr\|_{L^2(\R^3)}
+\bigl\|r^{-1}f\bigr\|_{L^2(\R^3)}<\infty\bigr\},
\end{equation}
and the divergence-free condition for $u_{\rm in},$ which writes in terms of the $\vv\al,$ is
\beq\label{S1eq1}
\pa_r a^r+\f{a^r}{r}+\pa_z a^z
	+\f{b^\th}{r}=0,\quad\pa_r b^r
	+\f{b^r}{r}+\pa_z b^z-\f{a^\th}{r}=0.\eeq
Then there exists an integer $N_0,$ which depends only on the initial data, so that for $N\geq N_0,$
the system $(NS)$ has a unique global solution of the form:
\begin{equation}\label{solgeneral}
\left\{
\begin{split}
\ur(t,x)=&\ur_{0}(t,r,z)
+\sum_{k=1}^\infty\left(\ur_{k}(t,r,z)\cos kN\th
+\vr_{k}(t,r,z)\sin kN\th\right),\\
\ut(t,x)=&\ut_{0}(t,r,z)+\sum_{k=1}^\infty
\bigl(\ut_{k}(t,r,z)\cos kN\th
+\vt_{k}(t,r,z)\sin kN\th\bigr),\\
\uz(t,x)=&\uz_{0}(t,r,z)
+\sum_{k=1}^\infty\left(\uz_{k}(t,r,z)\cos kN\th
+\vz_{k}(t,r,z)\sin kN\th\right).
\end{split}
\right.
\end{equation}
Furthermore, for any positive time $t,$ this solution almost concentrates on the same frequency $N$ as the initial data \eqref{1.5}.
 Precisely, for any $p\in ]5,6[$ and $ 1<\beta_p<\f{p-1}4,~
0<\al_p<\f{p-3-2\beta_p}4$, the Fourier coefficients in \eqref{solgeneral} satisfy for $\varpi_k\eqdef\bigl(\ur_k,\vr_k,\uz_k,\vz_k, \sqrt{kN}\ut_k,\sqrt{kN}\vt_k\bigr)$ that
\begin{equation}\label{1.8}
\|u_0\|_{L^\infty(\R^+;L^3)}\lesssim N^{-\f15},
\andf\sum_{k=2}^\infty (kN)^{2\beta_p}
\bigl\|r^{1-\f3p}\varpi_{k}\bigr\|_{L^\infty(\R^+;L^p)}^p
\lesssim N^{-2\al_p}.
\end{equation}

The aim of this paper is first to extend the result in \cite{LZ6} to the cylindrical domain case,
the  result states as follows:

\begin{thm}\label{thm1}
{\sl Let $\delta\in [0,\f14[$ and $\vv\al\eqdef(a^r,a^\th,a^z,b^r,b^\th,b^z)\in H^{0,1}$
be axially symmetric functions satisfying \eqref{S1eq1}.
\footnote{In this paper, $f\in H^{0,m}$ means that both $f$ and $\pa_z^m f$ belong to $L^2$. And the norms in this paper are always taken on $\Omega$ for spacial variables without specially mentioning.}
Let the   initial data be given by
\begin{equation}\label{initialN}
	\left\{
	\begin{split}
& \ur_{\rm in}(x)=N^{\delta}\bigl(
a^r(r,z)\cos N\th+b^r(r,z)\sin N\th\bigr),\\
& \ut_{\rm in}(x)=N^{\delta-1}\bigl(
a^\th(r,z)\cos N\th+b^\th(r,z)\sin N\th\bigr),\\
& \uz_{\rm in}(x)=N^{\delta}\bigl(
a^z(r,z)\cos N\th+b^z(r,z)\sin N\th\bigr).
	\end{split}
	\right.
\end{equation}
Then
for any $\eta\in[0,\f12-\delta[$\,, if $N\in\N^+$ is so large that
\begin{equation}\label{smallness}
\bigl(N^{-(\f14-\delta)}+
N^{-(\f12-\delta-\eta)}\bigr)
\|\vv\al\|_{L^2}^{\f12}
\|\p_z\vv\al\|_{L^2}^{\f12}\leq \e
\end{equation}
 for some $\e>0$ being sufficiently   small, the system $(NS)$ has a unique global strong solution $u\in C([0,\infty[; H^{0,1})$ with $\nabla u\in L^2(\R^+;H^{0,1})$, which is of the form  \eqref{solgeneral} and with the Fourier
 coefficients of which, $\vv u_0=(\ur_0,\ut_0,\uz_0)$ and $\vv u_k=\bigl(\ur_k,\vr_k,\uz_k,\vz_k,\ut_k, \vt_k\bigr),$ satisfy
\begin{equation}\begin{split}\label{S1eqw}
\|\p_z^j \vv u_0(t)\|_{L^2}
\leq C N^{-\f14+\delta}
\|\p_z^j\vv\al\|_{L^2},\quad
\|\p_z^j\vv u_k(t)\|_{L^2}
\leq C k^{-1}N^{-\eta(k-1)+\delta}
\|\pa_z^j\vv\al\|_{L^2},
\end{split}\end{equation} for any $k\in\N^+,  t>0$ and $j\in\{0,1\}.$
}\end{thm}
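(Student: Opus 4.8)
The plan is to exploit the fact that the quadratic nonlinearity $u\cdot\na u$ couples angular Fourier modes only additively, so an initial datum supported on the single mode $N$ forces the solution to live on the lattice $\{kN:k\in\Z\}$, which is exactly the ansatz \eqref{solgeneral}. First I would project the system \eqref{eqtu} onto each angular frequency $kN$ to derive the coupled infinite system governing $\vv u_0$ and $\{\vv u_k\}_{k\geq1}$: the zero mode $\vv u_0$ obeys a two–dimensional axisymmetric Navier–Stokes system driven by the quadratic self–interactions of the nonzero modes, while each $\vv u_k$ solves a linear Stokes–type equation in $\wt\Om$ whose dissipation operator contains the coercive term $\f{k^2N^2}{r^2}$ coming from $\f{\p_\th^2}{r^2}$, perturbed by the transport by $\vv u_0$ and by the convolution $\sum_{k_1+k_2=k}\vv u_{k_1}\otimes\vv u_{k_2}$. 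The rescaling hidden in $\varpi_k$ (the weight $\sqrt{kN}$ on the swirl component) is introduced precisely so that all six components sit at the same size and the antisymmetric off–diagonal zeroth–order couplings $\pm\f{2\pa_\th}{r^2}$, $\pm\f{\ur\ut}{r}$ cancel in the energy identity.

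Then I would set up a continuity (bootstrap) argument: assume \eqref{S1eqw} holds on a maximal interval $[0,T^\ast[$ with the constant $C$ replaced by $2C$, and show that the energy estimates in fact return the bound with $C$, forcing $T^\ast=\infty$. To capture the \emph{exponential} decay in $k$ rather than the polynomial decay of \cite{LZ6}, the decisive idea is to measure the nonzero modes in an analytic–type norm carrying the weight $N^{\eta k}$, i.e.\ to control a functional comparable to $\sum_{k\geq1}N^{2\eta k}\bigl(\|\vv u_k\|_{L^2}^2+\|\p_z\vv u_k\|_{L^2}^2\bigr)$ together with its dissipation. The crucial structural fact is that this weight is submultiplicative along the convolution, $N^{\eta k}\leq N^{\eta k_1}N^{\eta k_2}$ for $k=k_1+k_2$, so that the weighted norm of the nonlinear interaction factorizes into a product of weighted norms of the two interacting modes; this is what turns the convolution inequality into a closed quadratic inequality for a single quantity.

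For the individual estimates I would first treat $\vv u_0$, testing its equation against $\vv u_0$ and against $-\p_z^2\vv u_0$: the lower–order singular terms combine with the radial part of the Laplacian into a nonnegative quadratic form, so that the Dirichlet condition at $r=1$ yields a Hardy–type control of $\bigl\|\tfrac{\vv u_0}r\bigr\|_{L^2}$ as well, and one obtains $\|\vv u_0\|_{L^\infty(L^2)}+\|\na\vv u_0\|_{L^2(L^2)}$ and their $\p_z$ analogues bounded by the initial size $N^{-\f14+\delta}\|\vv\al\|_{L^2}$ plus a contribution from the higher modes that is quadratically small by the bootstrap hypothesis. For the nonzero modes I would run the weighted energy estimate, using the coercivity $\f{k^2N^2}{r^2}$ to gain the large factor $kN$ when dividing the transport and nonlinear terms by the dissipation rate; the gain of one power of $N^{-\eta}$ per convolution, against the loss $N^{\eta}$ inherent in $N^{-\eta(k_1-1)}N^{-\eta(k_2-1)}=N^{\eta}N^{-\eta(k-1)}$, is balanced exactly by the smallness \eqref{smallness} and the constraint $\eta<\f12-\delta$, which is why that range of $\eta$ appears. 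The pressure is eliminated beforehand using the mode–$k$ divergence–free constraint (the fourth line of \eqref{eqtu}), expressing $P_k$ through a second–order elliptic problem in $(r,z)$ whose solution operator is bounded uniformly in $k$ after the $\sqrt{kN}$ rescaling.

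The step I expect to be the main obstacle is closing the nonlinear estimate with the exponential weight while simultaneously keeping the anisotropic $r$–weights correct. Concretely, one must bound trilinear terms of the form $\int\vv u_{k_1}\,\na\vv u_{k_2}\,\vv u_k\,r\,dr\,dz$ by the product of the dissipative norms of the three factors with a constant independent of $k,k_1,k_2$; this requires sharp anisotropic Gagliardo–Nirenberg inequalities on the cylinder $\wt\Om$ in which the $L^\infty_z$ direction is absorbed by $\p_z$ and the horizontal direction by the coercive operator, and the summation over the convolution must not lose any $k$–dependent constant. Verifying that the resulting quadratic inequality for the weighted energy is genuinely contractive under \eqref{smallness}, and that the $k^{-1}$ prefactor in \eqref{S1eqw} survives the summation $\sum_{k_1+k_2=k}\f1{k_1k_2}\lesssim\f{\log k}{k}$, is where the delicate bookkeeping lies; everything else is a standard parabolic continuation once these uniform bilinear bounds are in hand.
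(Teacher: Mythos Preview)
Your overall architecture---Fourier decomposition onto the lattice $\{kN\}$, exponential weight $N^{\eta k}$ on the nonzero modes, and a continuity argument---matches the paper's. But the obstacle you flag at the end is real, and you are missing the specific tool the paper uses to overcome it.

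The paper's energy functional (their $E_j$) uses a \emph{supremum} over $k$ with weight $k^2N^{2\eta(k-2)}$ rather than your sum, but that is not the essential point. What actually closes the convolution estimate is an anisotropic Sobolev inequality \emph{in polar coordinates}: for a function carrying angular frequency $kN$, one has
\[
\|f\|_{L^4_\h}\lesssim\|f\|_{L^2_\h}^{1/2}\Bigl(\|\pa_r f\|_{L^2_\h}^{1/4}+\bigl\|\tfrac{f}{r}\bigr\|_{L^2_\h}^{1/4}\Bigr)\bigl\|\tfrac{f}{r}\bigr\|_{L^2_\h}^{1/4},
\]
so that when the dissipation supplies $kN\|f/r\|_{L^2}$, the $L^4_t(L^4_\h L^2_\v)$ norm of $\vv u_k$ carries an \emph{extra} factor $(kN)^{-1/4}$ beyond what the isotropic 2-D Sobolev embedding would give. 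This upgrades the decay of each factor in the trilinear term from $k^{-1}$ to $k^{-5/4}$, and then
\[
\sum_{k_1+k_2=k}k_1^{-5/4}k_2^{-5/4}\lesssim k^{-5/4}
\]
closes without any logarithmic loss. Your proposed bookkeeping with $\sum_{k_1+k_2=k}\tfrac{1}{k_1k_2}\lesssim\tfrac{\log k}{k}$ would not close, and you correctly sensed this; the resolution is not a more careful summation but a sharper product estimate coming from the angular direction. The paper isolates this as a separate section of lemmas and explicitly remarks that the isotropic inequality is insufficient.

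Two minor points: the pressure requires no elliptic solve---testing against $\vv u_k$ and using the mode-$k$ divergence constraint makes it vanish by integration by parts, exactly as in the standard energy identity. And the $\sqrt{kN}$ rescaling of the swirl component you mention is from the earlier result \cite{LZ6}; in the present paper the energy is taken on the unweighted $\vv u_k$, and the bad off-diagonal terms $\pm\tfrac{2kN}{r^2}$ are simply absorbed by the dominant diagonal $\tfrac{k^2N^2}{r^2}$ for $N\geq2$, not cancelled.
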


We emphasize that the proof of Theorem  \ref{thm1} works for $(NS)$ in the whole space case,
which is different from Theorem \ref{thm2} for $(ANS)$ (see Remark \ref{S1rmk3} below).

\begin{rmk}
{\sl Compared with the previous result in \cite{LZ6},
 Theorem \ref{thm1} has the following improvements:
 \begin{enumerate}

\item[(1)] It is well-known that the decay property of the Fourier coefficients to a function is related to its regularity.
 In \eqref{S1eqw}, the $kN$-th Fourier coefficient $\vv u_k$ decays exponentially as $N^{-\eta(k-1)+\delta}$, whereas in \eqref{1.8},
  the Fourier coefficients  only have polynomial decay in $k$.

\item[(2)] Here the profile of the initial data, $\vv\al$, belongs to $H^{0,1}$, which is much weaker than the regularity assumptions in \cite{LZ6} listed in \eqref{1.6}.

\item[(3)] Comparing \eqref{initialN} with \eqref{1.5}, it is easy to observe that the size of the initial data in Theorem \ref{thm1} is $N^\delta$ times that in \cite{LZ6}. We are wondering
    \end{enumerate}

    \smallskip
\noindent{\bf Open problem:} How large can this $\delta$ be? Can we improve Theorem \ref{thm1} to the case when $\delta=1$?
\smallskip

 This problem seems very  challenging and interesting. Indeed
 as we mentioned before,  so far we can only prove the global well-posedness of $(NS)$ with axially symmetric data
  with swirl part of which is sufficiently small. If one can improve
    Theorem \ref{thm1} to the case for $\delta=1,$ then one has example of initial data  which generates a unique global soution
    of $(NS)$ and which has large swirl part.
}\end{rmk}

\smallskip

Next let us consider  anisotropic Navier-Stokes equations $(ANS).$
Due to the anisotropic property of the system $(ANS),$ it is natural to work the well-posedness of
the system  with initial data in
anisotropic type functional spaces.
In fact, Chemin et al. \cite{CDGG} first
 proved that the system $(ANS)$ has a local solution
with initial data in ${H}^{0,\f12+\varepsilon}$ for some $\varepsilon>0$,
and the solution can be extended globally in time  if in addition
\begin{equation}\label{smallCDGG}
\|u_0\|_{L^2}^\varepsilon
\|u_0\|_{\dot{H}^{0,\f12+\varepsilon}}^{1-\varepsilon}\leq c \with \|u_0\|_{\dot{H}^{0,s}}
\eqdefa \||\xi_3|^{s}\widehat{u_0}(\xi)\|_{L^2},
\end{equation}
for some sufficiently small constant $c$. The uniqueness part was proved in \cite{Iftimie}.
This well-posedness result of $(ANS)$ in \cite{CDGG} was improved to the critical spaces in \cite{Pa02} for positive index case
and \cite{CZ07} for the negative index case. One may check \cite{LPZ,PZ1,Zhang10} for further improvements.

Our second result concerns the global well-posedness of the system $(ANS)$ with initial data of the form \eqref{initialN}. Precisely

\begin{thm}\label{thm2}
{\sl Let $\delta\in\bigl[0,\f14\bigr[$ and $m\geq3$ be an integer. Let $\vv{\al}\in H^{0,m+1}$ be axially symmetric functions satisfying \eqref{S1eq1}.
 Then there exists some small generic  positive constant $\e$ such that for any $\eta\in[\f14,\f12-\delta[$\,, if $N\in\N^+$ is so large that
\begin{equation}\label{smallANScyl}
\bigl(N^{-\left(\f14-\delta\right)}
+N^{-\f1m(1-\eta)}\bigr)\sum_{j=0}^{m+1}
\|\pa_z^j\vv\al\|_{L^2}\leq\e,\quad
N^{-\left(\f12-\delta-\eta\right)}
\|\vv\al\|_{L^2}^{\f12}
\|\p_z\vv\al\|_{L^2}^{\f12}\leq \e,
\end{equation}
 the system $(ANS)$ with initial data \eqref{initialN} has a unique global strong solution $u$ of the  form \eqref{solgeneral},
 and $u\in C([0,\infty[; H^{0,m})$ with $\nablah u\in L^2(\R^+; H^{0,m})$, which satisfies
\begin{equation}\label{HmANScyl}
\|u\|_{L^\infty(\R^+;H^{0,m})}^2
+\|\nablah u\|_{L^2(\R^+;H^{0,m})}^2
\leq N^{2\delta}\|\vv\al\|_{H^{0,m}}^2
+N^{-4(\f14-\delta)}
\|\vv\al\|_{H^{0,m+1}}^4.
\end{equation}
 Moreover, for any $\sigma\in\bigl]\f1{2m-3},\f12\bigr[$\,, there exists some constant $C_\sigma$ depending only on $\sigma$, so that the Fourier coefficients of $u$ satisfy for $j\in\{0,1\}$ and $t>0$:
\begin{equation}\label{asymthm1.2}\begin{split}
&\|\p_z^j \vv u_0(t)\|_{L^2}
\leq C_\sigma N^{-\f14+\delta}
\|\p_z^j\vv\al\|_{L^2},\\
&\|\p_z^j\vv u_k(t)\|_{L^2}
\leq C_\sigma k^{-\sigma(m-j)}N^{-\eta(k-1)+\delta}
\|\pa_z^j\vv\al\|_{L^2}\quad\text{for}\quad
1\leq k\leq A_j,\\
&\|\p_z^j\vv u_k(t)\|_{L^2}
\leq C_\sigma k^{-\sigma(m-j)}
N^{-\left(\f12-\eta-\delta\right)(m-j)-\eta+\delta}
\|\pa_z^j\vv\al\|_{L^2}\quad\text{for}\quad
k>A_j,
\end{split}\end{equation}
where $A_j\eqdefa\bigl[\f{(\f12-\eta-\delta)(m-j)}\eta\bigr]+2$.
}\end{thm}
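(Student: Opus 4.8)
The plan is to reduce $(ANS)$ to an infinite coupled system for the Fourier coefficients and then close a carefully weighted anisotropic energy estimate in which the horizontal dissipation alone must absorb all nonlinear interactions. First I would substitute the ansatz \eqref{solgeneral} into the cylindrical formulation \eqref{eqtu} with $\nu=0$ and project onto each angular frequency $kN$; this produces evolution equations for the mean flow $\vv u_0$ and the modes $\vv u_k$ ($k\geq1$) on the reduced domain $\wt\Om$, whose only dissipation is horizontal, namely the operator $\pa_r^2+\frac{\pa_r}r-\frac{(kN)^2}{r^2}$ (with the extra Hardy term $-\frac1{r^2}$ for the $r$ and $\th$ components). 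The decisive structural feature is that this operator supplies a coercive damping of order $(kN)^2$ for the $k$-th mode, which has to compensate entirely for the absent vertical viscosity.

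Next I would analyze the convolution structure of the nonlinearity: projecting $u\cdot\na u$ onto the frequency $kN$ couples pairs $(k_1,k_2)$ with $k_1+k_2=k$ or $|k_1-k_2|=k$, while the angular derivative $\frac{\pa_\th}r$ contributes factors $k_iN/r$. Isolating the advection of each mode by the mean flow $\vv u_0$ from the genuine high-mode self-interactions, the smallness hypothesis \eqref{smallANScyl} should force each mode to inherit an extra factor $N^{-\eta}$ relative to its lower-order parents, which is the mechanism behind the exponential factor $N^{-\eta(k-1)}$ in \eqref{asymthm1.2}. Since there is no vertical viscosity, the vertical derivatives appearing in these products cannot be controlled by dissipation; I would instead apply $\pa_z^j$ for $0\leq j\leq m$ and estimate the products with anisotropic inequalities of the type $\|f\|_{L^\infty_z L^2_\h}\lesssim\|f\|_{L^2}^{1/2}\|\pa_z f\|_{L^2}^{1/2}$, trading vertical regularity for $L^\infty_z$ control. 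This is where $m\geq3$ enters: enough vertical derivatives must be available to absorb the worst product terms into the horizontal dissipation without accumulating any net loss of vertical regularity across the infinitely many coupled modes.

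The heart of the argument, and what I expect to be the main obstacle, is the construction of a weighted energy functional of the form $\cE(t)=\|\vv u_0(t)\|_{H^{0,m}}^2+\sum_{k\geq1}c_k\|\vv u_k(t)\|_{H^{0,m}}^2$. The weights $c_k$ should grow roughly like $N^{2\eta k}$, so that $c_k\|\vv u_k\|_{H^{0,m}}^2$ is normalized to $O(1)$ across all modes; with this choice the cubic coupling $c_k\langle\vv u_{k_1}\cdot\na\vv u_{k_2},\vv u_k\rangle$ becomes scale-balanced, since the excess weight $N^{2\eta k}$ exactly cancels the expected smallness $N^{-\eta k_1}N^{-\eta k_2}N^{-\eta k}$ of the three factors when $k_1+k_2=k$. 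The subtlety flagged in the abstract is precisely that, absent vertical dissipation, these weights cannot be chosen on purely geometric grounds but must be tuned simultaneously against the anisotropic estimates and the $(kN)^2$ damping so that the off-diagonal coupling is dominated by the diagonal dissipation $\sum_k c_k(kN)^2\|\tfrac{\vv u_k}r\|_{H^{0,m}}^2$. Once this balance is achieved, \eqref{smallANScyl} yields a differential inequality $\frac{d}{dt}\cE+c\,\cD\leq0$; combined with the local theory for $(ANS)$ in $H^{0,m}$ and a continuation argument, this gives the global solution and the energy bound \eqref{HmANScyl}.

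Finally, to extract the two-regime decay \eqref{asymthm1.2} I would run the mode estimates iteratively. For very large $k$ the $H^{0,m}$ regularity alone, interpolated in the mode index, produces the polynomial factor $k^{-\sigma(m-j)}$ together with the fixed $N$-power $N^{-(\f12-\eta-\delta)(m-j)-\eta+\delta}$; for moderate $k$, bootstrapping the nonlinear coupling sharpens this to the exponential bound $N^{-\eta(k-1)+\delta}$. The two estimates cross when $\eta(k-1)=(\f12-\eta-\delta)(m-j)+\eta$, which is exactly the threshold $A_j=\bigl[\frac{(\f12-\eta-\delta)(m-j)}\eta\bigr]+2$, producing the stated dichotomy.
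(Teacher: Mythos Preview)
Your outline captures the convolution structure, the $N^{-\eta}$-per-mode gain mechanism, and the role of the anisotropic product estimates correctly, and your identification of the threshold $A_j$ is right. But the central step---the design of the weighted functional---has a gap that would prevent the argument from closing as written.

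You propose $\cE(t)=\|\vv u_0\|_{H^{0,m}}^2+\sum_{k\geq1}c_k\|\vv u_k\|_{H^{0,m}}^2$ with $c_k\sim N^{2\eta k}$. The problem is that with $\nu=0$ the operator $\wt\nabla=\vv e_r\pa_r+\vv e_z\pa_z$ still appears in the nonlinearity, and to bound $\|\pa_z\pa_z^j\vv u_k\|_{L^2_t(L^2)}$ you have no choice but to interpolate against the top level $\pa_z^m$, schematically $\|\pa_z^{j+1}\vv u_k\|\lesssim\|\pa_z^j\vv u_k\|^{1-\f1{m-j}}\|\pa_z^m\vv u_k\|^{\f1{m-j}}$. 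The $\pa_z^m$ factor carries no exponential weight (you only control it at the bare $H^{0,m}$ level), so once you multiply by $c_k\sim N^{2\eta k}$ the resulting term grows like $N^{\f{2\eta k}{m-j}}$ in $k$ and cannot be absorbed by the horizontal dissipation for $k$ large. In other words, the unbounded exponential weights are incompatible with the interpolation forced by the missing vertical viscosity; this is exactly the ``subtlety'' you allude to but do not resolve.

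The paper handles this by a two-stage argument that differs structurally from yours. First (Proposition~\ref{propANScyl}) it obtains the global solution and the unweighted bound \eqref{HmANScyl} by writing $u=u_L+w$ with $u_L$ the anisotropic Stokes flow and performing a direct $H^{0,\ell}$ energy estimate on $w$ for $0\leq\ell\leq m$; no mode weights enter here. Second (Proposition~\ref{propANScylE}) it introduces functionals $D_j$ only at levels $j=0,1$, defined as a \emph{supremum} over $k$ with weights $k^{2\sigma(m-j)}N^{2\min\{\eta(k-2),\,(\f12-\eta-\delta)(m-j)\}}$. The exponential growth is \emph{capped} at the plateau $(\f12-\eta-\delta)(m-j)$, which is precisely the largest exponent compatible with the interpolation against the top-level quantity $\|r^{-1}\pa_z^m\vv u_k\|_{L^2_t(L^2)}$ supplied by the first stage. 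Thus the two-regime decay \eqref{asymthm1.2} is not extracted \emph{a posteriori} by bootstrapping, as you suggest, but is hard-wired into the weight from the outset; the crossover at $A_j$ is where $\eta(k-2)$ meets the cap. If you want to salvage your single-functional approach, you would need to replace $c_k\sim N^{2\eta k}$ by this truncated weight and feed in the bare $H^{0,m}$ control as an external input---at which point you have essentially reproduced the paper's scheme.
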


\begin{rmk} \begin{itemize}
\item[(1)] Let us compare the decay-in-$k$ estimates \eqref{asymthm1.2} with \eqref{S1eqw}.

For $k\leq A_j$, $\vv u_k$ in \eqref{asymthm1.2} decays exponentially as $N^{-\eta(k-1)+\delta}$, which  coincides with \eqref{S1eqw}. Noticing that $A_j$ grows to infinity as $m\rightarrow\infty$, which indicates that the more regular $u_{\rm in}$  in $x_3$ variable, the faster $\vv u_k$
  decays with respect to $k$. This phenomenon also coincides with the fact that there is no smoothing effect in $x_3$ variable for the anisotropic Navier-Stokes equations.

On the other hand, in \eqref{S1eqw}, due to the exponential decay $N^{-\eta(k-1)+\delta}$, the power of $k$ in the polynomial part is actually not important, namely, there makes no difference whether it is $k^{-1}$ or $k^{-100}$ in \eqref{S1eqw}. However,  the exponential decay for $k$ in \eqref{asymthm1.2} is in fact truncated to be a constant for all $k> A_j$,  the decay power in the polynomial part is indeed crucial. Therefore,
for the anisotropic case, we need to construct the energy functionals more subtly (see \eqref{defDj} below).

\item[(2)] By interpolating between \eqref{HmANScyl} and \eqref{asymthm1.2}, we obtain the following polynomial decay-in-$k$ estimates for
$\vv u_k(t)$:
$$\|\p_z^j\vv u_k(t)\|_{L^2}\leq \|\vv u_k(t)\|_{L^2}^{1-\f{j}m}\|\p_z^m\vv u_k(t)\|_{L^2}^{\f{j}m}
\leq C_0\, k^{-\sigma(m-j)},\quad  \forall \ t>0,$$ for any $ j\in\{0,1,\cdots,m\},~ k\in\N^+,$
where $C_0$ is some positive constant depending only on $\sigma,~\eta,~\delta,~N$ and $\|\vv\al\|_{H^{0,m+1}}$.
\end{itemize}
\end{rmk}

\begin{rmk}\label{S1rmk3}
Different from Theorem \ref{thm1}, the proof of Theorem \ref{thm2} heavily relies on the fact that for any function $f$ defined in $\Omega$, there holds the following point-wise inequality
\begin{equation}\label{pointwisecyl}
|f(x)|\leq\bigl|\f{f(x)}r\bigr|,
\quad\forall\ x\in \Omega,
\end{equation}
which obviously does not hold for the whole space case. Thus to establish similar results as Theorem \ref{thm2} for $(ANS)$ in the whole space  requires a different approach.
\end{rmk}

Let us end this section with some notations that will be used throughout this paper.

\noindent{\bf Notations:}  $(\cdot | \cdot)$ designates the $L^2$ inner product on $\Omega.$
We always denote $C$ to be a uniform constant
which may vary from line to line.
$a\lesssim b$ means that $a\leq Cb$.
We use subscript $\h$ (resp. $\v$) to denote  the norm being taken
on $B_\h\eqdefa \bigl\{ (x_1,x_2)\in\R^2,\ x_1^2+x_2^2\leq 1\ \bigr\}$ (resp. $\R_{x_3}$), for example
$L^p_\v(L^q_\h)$ means $L^p(\R_{x_3}; L^q(B_\h))$.
Finally
all the $L^p$ norm stands for the $L^p(\Omega)$ norm so that
$\|f\|_{L^p}=\Bigl(\int_0^1\int_0^{2\pi}\int_{-\infty}^\infty |f(r,\theta,z)|^p \,rdr\,d\theta\,dz\Bigr)^{\f1p}.$

\setcounter{equation}{0}
\section{Ideas of the proof}

In this section, we shall first derive the equations for the Fourier coefficients $\vv u_0$ and $\vv u_k$ in \eqref{solgeneral}.
And then we shall sketch the proofs of Theorems \ref{thm1} and \ref{thm2}.

\subsection{Derivation of the equations for $\vv u_0$ and $\vv u_k$}
In view of the initial data \eqref{initialN},
we  deduce from Theorem 2.1 in \cite{LZ6} that the solutions of the system \eqref{eqtu}  can be expanded into Fourier series of the form \eqref{solgeneral}, correspondingly we write  the pressure function $P$ as
\begin{equation}\label{Pexpan}
P(t,x)=P_{0}(t,r,z)
+\sum_{k=1}^\infty \left(P_{k}(t,r,z)\cos kN\th+ Q_{k}(t,r,z)\sin kN\th\right).
\end{equation}
One should keep in mind that we always use the subscript $k$ to denote the $kN$-th Fourier coefficient for notational simplification.
And in what follows, we always denote
\beq \label{S2notion}
\begin{split}
u_0\eqdef& \ur_0\vv e_r+\uz_0\vv e_z,
\quad\wt u_k\eqdef\ur_k\vv e_r+\uz_k\vv e_z,
\quad\wt v_k\eqdef\vr_k\vv e_r+\vz_k\vv e_z,\\
&\wt\nabla\eqdef\vv e_r\pa_r+\vv e_z\pa_z \andf D_{0,t}\eqdef \p_t+u_0^r\pa_r+u_0^z\pa_z.
\end{split}
\eeq

By substituting \eqref{solgeneral} and \eqref{Pexpan} into the system \eqref{eqtu} and using the identities
\begin{align*}
&2\cos k_1\th\cdot\cos k_2\th=\cos (k_1+k_2)\th+\cos (k_1-k_2)\th,\\
&2\sin k_1\th\cdot\sin k_2\th=\cos (k_1-k_2)\th-\cos (k_1+k_2)\th,\\
&2\sin k_1\th\cdot\cos k_2\th=\sin (k_1+k_2)\th+\sin (k_1-k_2)\th,
\end{align*}
and then comparing the Fourier coefficients of the resulting equations, we find
\begin{itemize}
\item
$(\ur_0,~\ut_0 ,~\uz_0, ~P_0)$ satisfies
\end{itemize}
\begin{equation}\label{eqtu0}
\left\{
\begin{split}
&D_{0,t}\ur_{0}
-\bigl(\p_r^2+\f{\p_r}r-\f{1}{r^2}+\nu^2\p_z^2
\bigr)\ur_{0}+\pa_r P_{0}\\
&\qquad
=\f{(\ut_0)^2}r-\f12\sum_{k=1}^\infty\Bigl(\wt u_k\cdot\wt\nabla\ur_{k}+\wt v_k\cdot\wt\nabla\vr_{k}
+\f{kN}r (\ut_k \vr_k-\vt_k\ur_k)-\f{(\ut_k)^2+{(\vt_{k})^2}}r\Bigr),\\
&D_{0,t}\ut_{0}
-\bigl(\p_r^2+\f{\p_r}r-\frac{1}{r^2}+\nu^2\p_z^2
\bigr)\ut_{0}\\
&\qquad=-\f{\ut_0\ur_0}r-\f12\sum_{k=1}^\infty\Bigl(\wt u_k\cdot\wt\nabla\ut_{k}+\wt v_k\cdot\wt\nabla\vt_{k}
+\f1r(\ur_k\ut_k+\vr_k\vt_k)\Bigr),\\
&D_{0,t}\uz_{0}
-\bigl(\p_r^2+\f{\p_r}r+\nu^2\p_z^2\bigr)
\ur_{0}+\pa_z P_{0}\\
&\qquad=-\f12\sum_{k=1}^\infty\Bigl(\wt u_k\cdot\wt\nabla\uz_{k}+\wt v_k\cdot\wt\nabla\vz_{k}
+\f{kN}r (\ut_k \vz_k-\vt_k\uz_k)\Bigr),\\
& \pa_r\ur_{0}+\frac{\ur_{0}}r+\pa_z\uz_{0}=0, \quad (t,r,z)\in\R^+\times\wt\Omega,
\\
&\bigl(\ur_0,\ut_0,\uz_{0}\bigr)|_{r=1}=0,
\quad \bigl(\ur_{0},\ut_0,\uz_{0}\bigr)|_{t=0}=0.
\end{split}
\right.
\end{equation}

\begin{itemize}
\item
 $\left(\ur_{k},~\vt_{k},~\uz_{k},~P_k\right)$ for $k\in\N^+$ satisfies
 \end{itemize}
\begin{equation}\label{eqtuk}
\left\{
\begin{split}
&D_{0,t}\ur_{k}
-\bigl(\p_r^2+\f{\p_r}r-\frac{1+k^2N^2}{r^2}
+\nu^2\p_z^2\bigr)\ur_{k}
+\f{2kN}{r^2}\vt_{k}+\pa_r P_{k} \\
&\qquad\qquad=-\wt u_k\cdot \wt \nabla  \ur_0 -\f{kN}r \ut_0\vr_k +\f2r \ut_0 \ut_k+ F^r_{k},\\
&D_{0,t}\vt_{k}
-\bigl(\p_r^2+\f{\p_r}r-\frac{1+k^2N^2}{r^2}
+\nu^2\p_z^2\bigr)\vt_{k}
+\f{2kN}{r^2}\ur_{k}-\f{kN}r P_{k} \\
&\qquad\qquad=-\wt v_k\cdot \wt \nabla  \ut_0 +\f{kN}r \ut_0\ut_k -\f1r (\ur_0 \vt_k+\ut_0\vr_k)+ F^\th_{k},\\
&D_{0,t}\uz_{k}
-\bigl(\p_r^2+\f{\p_r}r-\f{k^2N^2}{r^2}+\nu^2\p_z^2\bigr)\uz_{k}
+\pa_z P_{k} =-\wt u_k\cdot \wt \nabla  \uz_0 -\f{kN}r \ut_0\vz_k + F^z_{k},\\
& \pa_r\ur_{k}+\frac{\ur_k}r+\pa_z\uz_{k}
+kN\f{\vt_{k}}r=0,\quad (t,r,z)\in\R^+\times\wt\Omega,\\
&(\ur_{k},\vt_{k},\uz_{k})|_{r=1}=0,\\
& (\ur_{1},\vt_{1},\uz_{1})|_{t=0}=N^\delta\bigl(a^r, N^{-1}{b^\th},a^z\bigr)
\andf(\ur_{k},\vt_{k},\uz_{k})|_{t=0}=0
~\text{ for }~k\geq2,
\end{split}
\right.
\end{equation}
where the external force terms $F^r_{k},~F^\th_{k},~F^z_{k},$ are given respectively by
\beq\label{eqtukq}
\begin{split}
F^r_k\eqdefa&-\f12\sum_{k_1+k_2=k} \Bigl(
\wt u_{k_1}\cdot\wt\nabla\ur_{k_2}-\wt v_{k_1}\cdot\wt\nabla\vr_{k_2}
+\f{k_2 N}r(\ut_{k_1}\vr_{k_2}+\vt_{k_1}\ur_{k_2})\\
&\quad
-\f{1}r(\ut_{k_1}\ut_{k_2}-\vt_{k_1}\vt_{k_2})\Bigr)-\f12\sum_{|k_1-k_2|=k} \Bigl(
\wt u_{k_1}\cdot\wt\nabla\ur_{k_2}+\wt v_{k_1}\cdot\wt\nabla\vr_{k_2}\\
&\quad+\f{k_2 N}r(\ut_{k_1}\vr_{k_2}-\vt_{k_1}\ur_{k_2})
-\f{1}r(\ut_{k_1}\ut_{k_2}+\vt_{k_1}\vt_{k_2})\Bigr),\\
F^\th_k\eqdefa&-\f12\sum_{k_1+k_2=k} \Bigl(
\wt u_{k_1}\cdot\wt\nabla\vt_{k_2}+\wt v_{k_1}\cdot\wt\nabla\ut_{k_2}
+\f{k_2 N}r(\vt_{k_1}\vt_{k_2}-\ut_{k_1}\ut_{k_2})\\
&\quad+\f{1}r(\ur_{k_1}\vt_{k_2}+\vr_{k_1}\ut_{k_2})\Bigr)+\f12\Bigl(\sum_{k_1-k_2=k}-\sum_{k_2-k_1=k}\Bigr) \Bigl(
\wt u_{k_1}\cdot\wt\nabla\vt_{k_2}\\
&\quad-\wt v_{k_1}\cdot\wt\nabla\ut_{k_2}-\f{k_2 N}r(\ut_{k_1}\ut_{k_2}+\vt_{k_1}\vt_{k_2})+\f{1}r(\ur_{k_1}\vt_{k_2}-\vr_{k_1}\ut_{k_2})\Bigr),\\
F^z_k\eqdefa&-\f12\sum_{k_1+k_2=k} \Bigl(
\wt u_{k_1}\cdot\wt\nabla\uz_{k_2}-\wt v_{k_1}\cdot\wt\nabla\vz_{k_2}
+\f{k_2 N}r(\ut_{k_1}\vz_{k_2}+\vt_{k_1}\uz_{k_2})\Bigr)\\
&-\f12\sum_{|k_1-k_2|=k} \Bigl(
\wt u_{k_1}\cdot\wt\nabla\uz_{k_2}+\wt v_{k_1}\cdot\wt\nabla\vz_{k_2}
+\f{k_2 N}r(\ut_{k_1}\vz_{k_2}-\vt_{k_1}\uz_{k_2})\Bigr),\\
\end{split} \eeq
and $k_1,k_2\in\N^+$ in the above summations.

\begin{itemize}
\item
 $\left(\vr_{k},~\ut_{k},~\vz_{k},~Q_k\right)$ for $k\in\N^+$ satisfies
 \end{itemize}
\begin{equation}\label{eqtvk}
\left\{
\begin{split}
&D_{0,t}\vr_{k}
-\bigl(\p_r^2+\f{\p_r}r-\frac{1+k^2N^2}{r^2}
+\nu^2\p_z^2\bigr)\vr_{k}
-\f{2kN}{r^2}\ut_{k}+\pa_r Q_{k} \\
&\qquad\qquad=-\wt v_k\cdot \wt \nabla  \ur_0 +\f{kN}r \ut_0\ur_k +\f2r \ut_0 \vt_k+ G^r_{k},\\
&D_{0,t}\ut_{k}
-\bigl(\p_r^2+\f{\p_r}r-\frac{1+k^2N^2}{r^2}
+\nu^2\p_z^2\bigr)\ut_{k}
-\f{2kN}{r^2}\vr_{k}+\f{kN}r Q_{k} \\
&\qquad\qquad=-\wt u_k\cdot \wt \nabla  \ut_0 -\f{kN}r \ut_0\vt_k -\f1r (\ur_0 \ut_k+\ut_0\ur_k)+ G^\th_{k},\\
&D_{0,t}\vz_{k}
-\bigl(\p_r^2+\f{\p_r}r-\frac{k^2N^2}{r^2}+\nu^2\p_z^2\bigr)\vz_{k}
+\pa_z Q_{k} =-\wt v_k\cdot \wt \nabla  \uz_0 +\f{kN}r \ut_0\uz_k + G^z_{k},\\
& \pa_r\vr_{k}+\frac{\vr_k}r+\pa_z\vz_{k}
-kN\f{\ut_{k}}r=0,\quad (t,r,z)\in\R^+\times\wt\Omega,\\
&(\vr_{k},\ut_{k},\vz_{k})|_{r=1}=0,\\
&(\vr_{1},\ut_{1},\vz_{1})|_{t=0}=N^\delta\bigl(b^r,N^{-1}{a^\th},b^z\bigr)
\andf(\vr_{k},\ut_{k},\vz_{k})|_{t=0}=0
~\text{ for }~k\geq2,
\end{split}
\right.
\end{equation}
where the external force terms $G^r_{k},~G^\th_{k},~G^z_{k},$ are given respectively by
\beq\label{eqtvkq}
\begin{split}
G^r_k\eqdefa&-\f12\sum_{k_1+k_2=k} \Bigl(
\wt u_{k_1}\cdot\wt\nabla\vr_{k_2}+\wt v_{k_1}\cdot\wt\nabla\ur_{k_2}
+\f{k_2 N}r(\vt_{k_1}\vr_{k_2}-\ut_{k_1}\ur_{k_2})\\
&\quad-\f{1}r(\ut_{k_1}\vt_{k_2}+\vt_{k_1}\ut_{k_2})\Bigr)+\f12\Bigl(\sum_{k_1-k_2=k}-\sum_{k_2-k_1=k}\Bigr) \Bigl(
\wt u_{k_1}\cdot\wt\nabla\vr_{k_2}\\
&\quad-\wt v_{k_1}\cdot\wt\nabla\ur_{k_2}
-\f{k_2 N}r(\ut_{k_1}\ur_{k_2}+\vt_{k_1}\vr_{k_2})
-\f{1}r(\ut_{k_1}\vt_{k_2}-\vt_{k_1}\ut_{k_2})\Bigr),\\
G^\th_k\eqdefa&-\f12\sum_{k_1+k_2=k} \Bigl(
\wt u_{k_1}\cdot\wt\nabla\ut_{k_2}-\wt v_{k_1}\cdot\wt\nabla\vt_{k_2}
+\f{k_2 N}r(\ut_{k_1}\vt_{k_2}+\vt_{k_1}\ut_{k_2})\\
&\quad+\f{1}r(\ur_{k_1}\ut_{k_2}-\vr_{k_1}\vt_{k_2})\Bigr)-\f12\sum_{|k_1-k_2|=k} \Bigl(
\wt u_{k_1}\cdot\wt\nabla\ut_{k_2}+\wt v_{k_1}\cdot\wt\nabla\vt_{k_2}\\
&\quad
+\f{k_2 N}r(\ut_{k_1}\vt_{k_2}-\vt_{k_1}\ut_{k_2})
+\f{1}r(\ur_{k_1}\ut_{k_2}+\vr_{k_1}\vt_{k_2})\Bigr),\\
G^z_k\eqdefa&-\f12\sum_{k_1+k_2=k} \Bigl(
\wt u_{k_1}\cdot\wt\nabla\vz_{k_2}+\wt v_{k_1}\cdot\wt\nabla\uz_{k_2}
+\f{k_2 N}r(\vt_{k_1}\vz_{k_2}-\ut_{k_1}\uz_{k_2})\Bigr)\\
&+\f12\Bigl(\sum_{k_1-k_2=k}-\sum_{k_2-k_1=k}\Bigr) \Bigl(
\wt u_{k_1}\cdot\wt\nabla\vz_{k_2}-\wt v_{k_1}\cdot\wt\nabla\uz_{k_2}
-\f{k_2 N}r(\ut_{k_1}\uz_{k_2}+\vt_{k_1}\vz_{k_2})\Bigr),\\
\end{split} \eeq
and $k_1,k_2\in\N^+$ in the above summations.

\subsection{Outline of the proof to Theorem \ref{thm1}}

Recall that $\vv u_0=(\ur_0,\ut_0,\uz_0),~\vv u_k= (\ur_k,\vt_k,\uz_k,$ $\vr_k,\ut_k,\vz_k),$
$~0\leq\delta<\f14$ and $0\leq\eta<\f12-\delta$ in the statement of Theorem \ref{thm1}. For $j\in\{0,1\},$
we introduce the following weighted  energy functional for the Fourier coefficients:
\begin{equation}\begin{aligned}\label{def:E}
E_j(t)\eqdefa &N^{2(\f14-\eta)}
\Bigl(\|\pa_z^j \vv u_0\|_{L^\oo_t(L^2)}^2
+\|\wt\nabla\pa_z^j \vv u_0\|_{L^2_t(L^2)}^2
+\bigl\|\f{(\pa_z^j\ur_0,\pa_z^j \ut_0)}r\bigr\|_{L^2_t(L^2)}^2\Bigr)\\
&+\sup_{k\geq1}k^2
N^{2\eta (k-2)} \Bigl(\|\pa_z^j \vv u_k\|_{L^\oo_t(L^2)}^2+\|\wt\nabla\pa_z^j \vv u_k\|_{L^2_t(L^2)}^2+\f{k^2 N^2}2 \bigl\|\f{\pa_z^j \vv u_k}r\bigr\|_{L^2_t(L^2)}^2\Bigr).
\end{aligned}\end{equation}

The following {\it a priori} estimates are the cores of the proof of Theorem \ref{thm1}.

\begin{prop}\label{main prop}
{\sl
Let  $\nu=1$ and $N\geq2$, $\vv u_0$ and $\vv u_k$ be smooth enough solutions of \eqref{eqtu0}, \eqref{eqtuk} and \eqref{eqtvk} respectively on $[0,T]$. Then
 for any $t\leq T$ and $j\in\{0,1\},$ there holds
\begin{equation}\begin{split}\label{eq:E}
& E_j(t) \leq E_j(0)+ C\max\bigl\{N^{\eta-\f14},N^{2(\eta-\f14)} \bigr\} E_0^\f14(t)E_1^\f14(t)E_j(t).
\end{split}\end{equation}
}
\end{prop}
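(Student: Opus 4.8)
The plan is to run weighted $L^2$ energy estimates on the coupled systems \eqref{eqtu0}, \eqref{eqtuk} and \eqref{eqtvk}, differentiated $j$ times in $z$, and then to assemble them with the weights $N^{2(\f14-\eta)}$ and $k^2N^{2\eta(k-2)}$ built into $E_j$. Concretely, for $j\in\{0,1\}$ I would apply $\pa_z^j$ to each equation, take the $L^2(\Omega)$ inner product of the result with $\pa_z^j$ of the corresponding unknown, and integrate over $[0,t]$. The material derivative $D_{0,t}$ produces $\f12\f{d}{dt}\|\pa_z^j\cdot\|_{L^2}^2$ up to the commutator $[\pa_z^j,u_0\cdot\wt\nabla]$ (nontrivial only for $j=1$), which I treat as an extra nonlinear contribution; incompressibility of $u_0$ kills the remaining transport term. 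The viscous operators, after integration by parts against the measure $r\,dr$ (boundary terms vanish by the Dirichlet condition at $r=1$ and decay at $r=0$ and $|z|\to\infty$), yield the dissipation $\|\wt\nabla\pa_z^j\cdot\|_{L^2}^2$ (the full $(r,z)$-gradient, since $\nu=1$) together with the Hardy-type contributions from the $-\f1{r^2}$ and $-\f{1+k^2N^2}{r^2}$ potentials, which are exactly the $\|\f{\cdot}r\|_{L^2}^2$ and $\f{k^2N^2}2\|\f{\vv u_k}r\|_{L^2}^2$ terms of $E_j$.

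Second, I would dispose of the linear algebraic couplings. The pressure gradients $\pa_r P_k$, $-\f{kN}r P_k$, $\pa_z P_k$ (and their $Q_k$ analogues) integrate to zero against the velocity once the relations $\pa_r\ur_k+\f{\ur_k}r+\pa_z\uz_k+kN\f{\vt_k}r=0$ are invoked. The skew couplings $\f{2kN}{r^2}\vt_k$ in the $\ur_k$-equation and $\f{2kN}{r^2}\ur_k$ in the $\vt_k$-equation combine with the $\f{1+k^2N^2}{r^2}$ potential into the quadratic form with matrix $\bigl(\begin{smallmatrix}1+k^2N^2 & 2kN\\ 2kN & 1+k^2N^2\end{smallmatrix}\bigr)$, whose eigenvalues $(1\pm kN)^2$ are nonnegative, so that for $N\geq2$ one retains the coercive $\f{k^2N^2}2\|\f{\vv u_k}r\|^2$ dissipation recorded in $E_j$; the $(\vr_k,\ut_k)$ block is identical. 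The terms carrying $\ut_0$ as a coefficient, such as $\f{kN}r\ut_0\vr_k$ and $\f2r\ut_0\ut_k$, are handled by pairing $\|\f{\ut_0}r\|$ — supplied by the Hardy part of the $\vv u_0$ energy — against the $\f{kN}r$- and $\f1r$-weighted dissipation of $\vv u_k$, and so feed into the $E_0^{\f14}E_1^{\f14}E_j$ bound.

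The core of the argument — and the \emph{main obstacle} — is the estimation of the genuinely nonlinear sources: the self-interactions $\sum_{k\geq1}(\wt u_k\cdot\wt\nabla\ur_k+\cdots)$ in \eqref{eqtu0}, and the convolution sums $F^r_k,F^\th_k,F^z_k$ and $G^r_k,G^\th_k,G^z_k$ over $k_1+k_2=k$ and $|k_1-k_2|=k$. My strategy is to place exactly one factor of each product in $L^\infty_\v(L^2_\h)$ via the anisotropic embedding $\|f\|_{L^\infty_\v(L^2_\h)}\lesssim\|f\|_{L^2}^{\f12}\|\pa_z f\|_{L^2}^{\f12}$ — this is what turns an $L^2$ versus $\pa_z L^2$ split into the prefactor $E_0^{\f14}E_1^{\f14}$ — while the remaining factors absorb the $(r,z)$-gradient and $\f1r$ weights into the dissipation of $E_j$. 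The delicate bookkeeping is the weight balance: for $\sum_{k_1+k_2=k}$ one writes $N^{2\eta(k-2)}=N^{2\eta(k_1-2)}N^{2\eta(k_2-2)}N^{4\eta}$ and $k^2\lesssim k_1^2k_2^2$, so that each factor is measured against its own $k_i^2N^{2\eta(k_i-2)}$ weight inside $E$, leaving a convergent residual $\sum_{k_1\geq1}k_1^{-2}<\infty$; the difference interactions $|k_1-k_2|=k$ need the same factorization combined with the $\sup_k$ structure of $E_j$.

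Collecting every contribution, factoring out $E_0^{\f14}E_1^{\f14}E_j$, and tracking the powers of $N$ — the $N^{2(\f14-\eta)}$ weight on the $\vv u_0$ block versus the $k^2N^{2\eta(k-2)}$ weights on the $\vv u_k$ blocks — produces the prefactor $\max\{N^{\eta-\f14},N^{2(\eta-\f14)}\}$ and yields \eqref{eq:E}. The principal technical difficulty I anticipate is ensuring the convolution sums converge uniformly in $k$ under the $\sup_k$ norm while extracting precisely the power $E_0^{\f14}E_1^{\f14}E_j$ with no loss in the $N$-exponent; the commutator $[\pa_z,u_0\cdot\wt\nabla]$ for $j=1$ and the $\ut_0$-coefficient terms are the places where one must be most careful not to spend more dissipation than $E_j$ makes available.
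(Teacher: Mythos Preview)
Your overall architecture is right — the energy identities, the pressure cancellations, the $(1\pm kN)^2$ coercivity, the vertical interpolation producing $E_0^{1/4}E_1^{1/4}$, and the integration-by-parts that moves the gradient in $F_k,G_k$ onto $\vv u_k$ are all exactly what the paper does. But there is a genuine missing ingredient that makes the estimate fail as written.

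The gap is the \emph{horizontal} anisotropic Sobolev inequality in polar coordinates, i.e.\ the paper's \eqref{3.6}: for radial profiles,
\[
\|g\|_{L^4_\h}\lesssim\|g\|_{L^2_\h}^{\f12}\bigl(\|\pa_r g\|_{L^2_\h}^{\f14}+\|g/r\|_{L^2_\h}^{\f14}\bigr)\|g/r\|_{L^2_\h}^{\f14}.
\]
Applied to $\vv u_k$ this converts part of the large Hardy dissipation $k^2N^2\|\vv u_k/r\|_{L^2}^2$ into an extra factor $(kN)^{-1/4}$ in the $L^4_\h$ bound, giving (Lemma~\ref{lembddE})
\[
\|\vv u_k\|_{L^4_t(L^4_\h(L^2_\v))}\lesssim k^{-5/4}N^{\eta(2-k)-\f14}E_0^{\f12},
\]
rather than the $k^{-1}N^{\eta(2-k)}$ one gets from the isotropic Gagliardo--Nirenberg inequality. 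You only invoke the \emph{vertical} embedding $\|f\|_{L^\infty_\v}\lesssim\|f\|_{L^2_\v}^{1/2}\|\pa_z f\|_{L^2_\v}^{1/2}$, which is needed too but is orthogonal to this point.

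Without the $(kN)^{-1/4}$ gain two things break. First, the $N$-balance: tracing the trilinear force $\sum_{k_1+k_2=k}\int|\vv u_{k_1}||\vv u_{k_2}|\bigl(|\wt\nabla\vv u_k|+kN|\vv u_k/r|\bigr)$ with the isotropic $L^4_\h$ bound and then multiplying by the weight $k^2N^{2\eta(k-2)}$ produces a prefactor $N^{2\eta}$, not $N^{2(\eta-\f14)}$ --- there is no smallness in $N$ at all. Second, the $k$-summability: with only $k_i^{-1}$ on each factor one has $\sum_{k_1+k_2=k}k_1^{-1}k_2^{-1}\sim k^{-1}\log k$, and after the $k$ left over from the weight this yields $\log k$, so the $\sup_k$ in $E_j$ is infinite. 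Your bookkeeping ``$k^2\lesssim k_1^2k_2^2$, residual $\sum k_1^{-2}$'' does not reflect the actual three-mode structure (only one of $\vv u_{k_1},\vv u_{k_2},\vv u_k$ sits in the dissipation; the other two are in $L^4_\h$), and it is precisely here that the extra $k_i^{-1/4}$ is needed to make $\sum_{k_1+k_2=k}k_1^{-5/4}k_2^{-5/4}\lesssim k^{-5/4}$ converge; see \eqref{S4eq25a}. The paper remarks explicitly after Lemma~\ref{lembddE} that this gain ``turns out to be crucial.''
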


The proof of Proposition \ref{main prop} will be presented in Section \ref{sec4}.

 With Proposition \ref{main prop}, Theorem \ref{thm1} follows from a standard continuity argument.

\begin{proof}[Proof of Theorem \ref{thm1}] We first get, by using standard energy method that
the systems \eqref{eqtu0}, \eqref{eqtuk} and \eqref{eqtvk} for $\nu=1$ have unique local solutions, which can be
in fact be justified by the following {\it a priori} estimates. For simplicity, here we just present the
{\it a priori} estimates. We define
\begin{equation}\label{2.10}
T^\star_1\eqdef \sup\bigl\{\,t>0:\, E_0(t)\leq 4E_0(0)\andf E_1(t) \leq 4E_1(0)\, \bigr\}.
\end{equation}
 We are going to prove that $T^\star_1=\infty$ under the assumption \eqref{smallness}. Indeed
it follows from \eqref{smallness}, \eqref{def:E} and \eqref{2.10} that for any $t\leq T^\star_1$,
\begin{align*}
\max\bigl\{N^{\eta-\f14},N^{2(\eta-\f14)} \bigr\}
E_0^{\f14}(t)E_1^{\f14}(t)
&\leq 2\max\bigl\{N^{\eta-\f14},N^{2(\eta-\f14)} \bigr\}
 E_0^{\f14}(0)E_1^{\f14}(0)\\
&\leq 2\max\bigl\{N^{\delta-\f14},
N^{\eta+\delta-\f12} \bigr\}\|\vv\al\|_{L^2}^{\f12}
\|\p_z\vv\al\|_{L^2}^{\f12}
\leq 2\e.
\end{align*}
Then we deduce from \eqref{eq:E} that for any $t\leq T^\star_1$ and $j\in\{0,1\}$
$$E_j(t) \leq E_j(0)+ 2C\e E_j(t).$$
In particular, if $\e$ is so small  that
$2C\e\leq\f12$, we infer
\begin{equation}\label{2.11}
E_j(t)\leq 2 E_j(0),\quad\mbox{for}\ \ j\in\{0,1\}\andf t\leq T_1^\star,
\end{equation}
which contradicts with \eqref{2.10} unless $T^\star_1=\infty.$ This in turn shows that $T^\star_1=\infty$ if
$\e$ is sufficiently small in \eqref{smallness}, and there holds \eqref{2.11}  for any $t>0$. This completes the proof Theorem \ref{thm1}.
\end{proof}

\subsection{Outline of the proof to Theorem \ref{thm2}}
The proof of Theorem \ref{thm2}  is more subtle due to the loss of dissipation in the vertical variable for the system $(ANS).$
 Toward this, we shall first derive the following rough estimates in Section \ref{sec5}:

\begin{prop}\label{propANScyl}
{\sl Under the assumptions of Theorem \ref{thm2}, the system  $(ANS)$ has a unique global solution $u\in C([0,\infty[; H^{0,m})$ with $\nablah u\in L^2(\R^+; H^{0,m}),$ which is of the form \eqref{solgeneral} and which satisfies for $0\leq \ell\leq m$
\begin{align*}
&\|\pa_z^\ell\vv u_0\|_{L^\infty(\R^+; L^2)}^2
+\|\pa_r\pa_z^\ell\vv u_0\|_{L^2(\R^+; L^2)}^2
+\|r^{-1}\pa_z^\ell
(u_0^r,\ut_0)\|_{L^2(\R^+; L^2)}^2
\lesssim \cH_\ell(\vv\al),
\\
&\|\p_z^\ell \vv u_1\|_{L^\infty(\R^+;L^2)}^2
+\|\pa_r\p_z^\ell\vv u_1\|_{L^2(\R^+;L^2)}^2
+N^2\|r^{-1}\p_z^\ell \vv u_1\|_{L^2(\R^+;L^2)}^2
\lesssim N^{2\delta}\|\p_z^\ell \vv\al\|_{L^2}^2
+\cH_\ell(\vv\al),
\end{align*}
and for  $k\geq2$
\begin{align*}
\|\pa_z^\ell\vv u_k\|_{L^\infty(\R^+; L^2)}^2
+\|\pa_r\pa_z^\ell\vv u_k\|_{L^2(\R^+; L^2)}^2
+k^2N^2\|r^{-1}\pa_z^\ell\vv u_k\|_{L^2(\R^+; L^2)}^2
\lesssim \cH_\ell(\vv\al),
\end{align*}
where
$$\cH_\ell(\vv\al)\eqdef N^{-4(\f14-\delta)}
\sum_{i=0}^{\max\{\ell,1\}}\|\pa_z^i\vv\al\|_{L^2}^4
+N^{-4(1-\delta)}\sum_{j=1}^{\ell+1}
\|\pa_z^j\vv\al\|_{L^2}^4.$$
}\end{prop}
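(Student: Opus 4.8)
The plan is to prove Proposition \ref{propANScyl} through a local existence result followed by a continuity (bootstrap) argument built on weighted $\p_z^\ell$ energy estimates carried out separately on the three families of Fourier coefficients governed by \eqref{eqtu0}, \eqref{eqtuk} and \eqref{eqtvk} with $\nu=0$. First I would observe that, since $m\geq3$, the anisotropic space $H^{0,m}$ is a sufficiently regular functional framework: a standard Friedrichs or Galerkin scheme, together with the abstract well-posedness theory for $(ANS)$ recalled in the introduction, yields a unique local-in-time solution of \eqref{eqtu} that admits the Fourier expansion \eqref{solgeneral}, so that it suffices to propagate the stated bounds on the maximal interval of existence. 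The three families play distinct roles: $\vv u_1$ carries the $N^\delta$-sized data and should inherit the term $N^{2\delta}\|\p_z^\ell\vv\al\|_{L^2}^2$, whereas $\vv u_0$ and $\vv u_k$ for $k\geq2$ start from zero and are \emph{slaved} to quadratic self-interactions of the other modes, which is precisely why their bounds are of the purely quartic type $\cH_\ell(\vv\al)$.

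For the energy estimates I would apply $\p_z^\ell$ to each equation (for $0\leq\ell\leq m$) and take the $L^2(\Omega)$ inner product with $\p_z^\ell$ of the corresponding unknown. The horizontal viscous operator produces, after integration by parts and use of the homogeneous condition at $r=1$, the dissipative quantities $\|\pa_r\p_z^\ell\vv u_0\|_{L^2}^2$ together with the singular-weight term $\|r^{-1}\p_z^\ell(\ur_0,\ut_0)\|_{L^2}^2$ for the zero mode, and for $\vv u_k$ the crucial penalization $k^2N^2\|r^{-1}\p_z^\ell\vv u_k\|_{L^2}^2$ coming from the $\tfrac{1+k^2N^2}{r^2}$ coefficient. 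The pressure contributions cancel upon pairing $\pa_r P_0,\ \pa_z P_0$ (respectively $\pa_r P_k,\ \pa_z P_k,\ -\tfrac{kN}{r}P_k$) against the velocity and invoking the divergence-free constraints in \eqref{eqtu0}, \eqref{eqtuk} and \eqref{eqtvk}, which are preserved under $\p_z^\ell$. The remaining work is to absorb, first, the coupling terms such as $r^{-1}(\ut_0)^2$, $r^{-1}\ur_0\ut_0$ and the $\tfrac{kN}{r}\ut_0\,(\cdot)$ interactions between the zero mode and $\vv u_k$, and second, the convolution source terms $F^\bullet_k,\ G^\bullet_k$ of \eqref{eqtukq} and \eqref{eqtvkq}. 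I would estimate these by combining the pointwise bound \eqref{pointwisecyl}, namely $|f|\leq|f/r|$ on $\Omega$, with anisotropic Sobolev inequalities (the vertical embedding $H^{0,s}\hookrightarrow L^\infty_\v$ for $s>\tfrac12$ together with a horizontal Gagliardo--Nirenberg inequality), every singular factor $r^{-1}$ being paired with one of the dissipative weighted norms above; the $k$-summations are then closed by Cauchy--Schwarz and Young's convolution inequality, using $m\geq3$ to guarantee the requisite summability.

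The main obstacle is the absence of vertical dissipation: since $\nu=0$, differentiating the material derivative $D_{0,t}$ generates commutators $[\p_z^\ell,\,u_0^z\p_z]$ and $[\p_z^\ell,\,u_0^r\pa_r]$ whose highest-order contributions, of the form $(\p_z^\ell u_0^z)\p_z(\cdot)$ and $(\p_z u_0^z)\p_z^\ell(\cdot)$, cannot be controlled by a vertical smoothing term as in the classical case $\nu=1$ treated in Proposition \ref{main prop}. The resolution is structural: one transfers the vertical derivative onto the $r^{-1}$-weighted unknowns, so that the horizontal dissipation and the $r^{-2}$-penalization—again accessed through \eqref{pointwisecyl}—supply the control no longer furnished by $\pa_z$. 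This is exactly the point flagged in Remark \ref{S1rmk3} which forces the whole argument to live in the cylinder rather than in the whole space. Finally I would close the estimates by the continuity argument: defining a stopping time through the smallness of the total weighted functional and invoking the hypotheses \eqref{smallANScyl}, whose prefactors $N^{-(\frac14-\delta)}$, $N^{-\frac1m(1-\eta)}$ and $N^{-(\frac12-\delta-\eta)}$ are precisely what makes the quartic nonlinear contributions absorbable, one shows that the functional cannot reach twice its initial value, whence the solution is global and obeys the asserted bounds with right-hand side $\cH_\ell(\vv\al)$.
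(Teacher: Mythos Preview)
Your proposal and the paper take genuinely different routes. The paper does \emph{not} run $\p_z^\ell$ energy estimates mode-by-mode on \eqref{eqtu0}--\eqref{eqtvk}. Instead it introduces the linear anisotropic Stokes flow $u_L$ solving \eqref{5.1} with the same data $u_{\rm in}$; by linearity $u_L$ lives purely on the $N$-th Fourier mode (Corollary~\ref{corA}), and Lemma~\ref{lem5.1} gives sharp bounds for it. Writing $u=u_L+w$, the remainder $w$ has zero initial data and is estimated \emph{in physical space}, with no Fourier decomposition, via an inductive $\p_3^\ell$ energy argument on the single equation \eqref{5.5}. The mode-wise claims then drop out of Parseval: $\vv u_0$ and $\vv u_k$ for $k\geq2$ are Fourier coefficients of $w$ alone, whence the quartic bound $\cH_\ell(\vv\al)$, while $\vv u_1=\vv u_L+\vv w_1$ picks up the extra $N^{2\delta}\|\p_z^\ell\vv\al\|_{L^2}^2$.

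What the paper's decomposition buys is that the quartic right-hand side comes for free from $w|_{t=0}=0$ together with one physical-space energy inequality whose source is quadratic in $u_L+w$; there are no $k$-convolutions to close. Your scheme, by contrast, couples all modes through $F_k,G_k$: to bound $\vv u_k$ \emph{uniformly in $k$} you must control sums $\sum_{k_1\pm k_2=k}\|\vv u_{k_1}\|\,\|\vv u_{k_2}\|$, but Proposition~\ref{propANScyl} itself asserts no decay in $k$, so Young's convolution inequality alone does not close the loop at this level. In practice you would either re-aggregate over $k$ via Parseval---which is the paper's physical-space argument in disguise---or import the weighted functional of Proposition~\ref{propANScylE} prematurely. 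Your diagnosis of the missing-vertical-dissipation obstacle and its cure (replacing $\p_z u^z$ by horizontal divergence and invoking \eqref{pointwisecyl}) is correct and matches exactly what the paper does inside the $w$-estimate; the structural difference lies in the $u_L+w$ splitting, which you do not use.
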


Proposition \ref{propANScyl} yields the global existence part of Theorem \ref{thm2}.
It remains to  prove the decay-in-$k$ estimate \eqref{asymthm1.2}. In order to do so, for $\vv u_0$ and $\vv u_k$ being
determined by the systems \eqref{eqtu0}, \eqref{eqtuk} and \eqref{eqtvk} for $\nu=0,$ we
introduce for $j=0$ or $1$ the following energy functionals:
\begin{equation}\begin{split}\label{defDj}
D_j(t)\eqdefa&N^{2\left(\f14-\eta\right)}
\Bigl(\|\p_z^j \vv u_0\|_{L^\oo_t(L^2)}^2
+\|\pa_r\p_z^j \vv u_0\|_{L^2_t(L^2)}^2
+\bigl\|\f{(\p_z^j\ur_0,\p_z^j \ut_0)}r\bigr\|_{L^2_t(L^2)}^2\Bigr)\\
&+\sup_{k\in\N^+}\Bigl\{k^{2\sigma(m-j)}
N^{2\min\left\{\eta(k-2),\left(\f12-\eta-\delta\right)(m-j)\right\}}\\
&\qquad\qquad\times\Bigl(\|\p_z^j \vv u_k\|_{L^\oo_t(L^2)}^2+\|\pa_r\p_z^j \vv u_k\|_{L^2_t(L^2)}^2+\f{k^2 N^2}2\bigl\|\f{\p_z^j \vv u_k}r\bigr\|_{L^2_t(L^2)}^2\Bigr)\Bigr\}.
\end{split}\end{equation}

Concerning the energy functionals $D_j$ for $j=1,2,$ we shall prove in  Section \ref{sec6} the following {\it a priori} estimates:

\begin{prop}\label{propANScylE}
{\sl Let $u$ be  the unique solution of $(ANS)$ obtained in Proposition \ref{propANScyl} and $D_j, j=1, 2,$ be the energy functional
  defined by \eqref{defDj}. Then for any $t>0,$ there holds
\begin{equation}\begin{split}\label{ANScylE1}
D_0(t)\leq D_0(0)+C\Bigl(&N^{\max\left\{\eta-\f14,2\left(\eta-\f14\right)\right\}}
D_0^{\f1{2m}}(t)+N^{\f{m-1}{m}\left(\eta-\f14\right)}
\bigl\|\f{\pa_z^m (\ur_0,\ut_0)}r\bigr\|_{L^2_t(L^2)}^{\f1m}\\
&+N^{\eta-\delta}\sup_{k\in\N^+}
\bigl\|\f{\pa_z^m\vv u_k}r\bigr\|
_{L^2_t(L^2)}^{\f1m}\Bigr)
D_0^{\f54-\f1{2m}}(t)D_1^\f14(t),
\end{split}\end{equation}
and
\begin{equation}\begin{split}\label{ANScylE2}
D_1(t) \leq D_1(0)
+C\Bigl(&N^{\max\left\{\eta-\f14,2\bigl(\eta-\f14\bigr)\right\}}
D_1^{\f1{2(m-1)}}(t)
+N^{\f{m-2}{m-1}\left(\eta-\f14\right)}
\bigl\|\f{\pa_z^m (\ur_0,\ut_0)}r\bigr\|_{L^2_t(L^2)}^{\f1{m-1}}\\
&+N^{\eta-\delta}\sup_{k\in\N^+}
\bigl\|\f{\pa_z^m\vv u_k}r\bigr\|
_{L^2_t(L^2)}^{\f1{m-1}}\Bigr)
D_0^\f14(t) D_1^{\f54-\f1{2(m-1)}}(t).
\end{split}\end{equation}
}\end{prop}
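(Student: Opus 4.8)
The plan is to run weighted $\pa_z^j$-energy estimates ($j\in\{0,1\}$) on the three families \eqref{eqtu0}, \eqref{eqtuk}, \eqref{eqtvk} with $\nu=0$ and to reassemble them into $D_j$. For the zero mode I would apply $\pa_z^j$ to \eqref{eqtu0} and pair the result with $\pa_z^j\vv u_0$ in $L^2$; by the constraint $\pa_r\ur_0+\f{\ur_0}r+\pa_z\uz_0=0$ the transport operator $D_{0,t}$ contributes nothing when $j=0$ and only the commutator $[\pa_z,\,\ur_0\pa_r+\uz_0\pa_z]$ when $j=1$, the pressure $P_0$ drops after integration by parts, and the horizontal diffusion $-(\pa_r^2+\f{\pa_r}r-\f1{r^2})$ reproduces exactly $\|\pa_r\pa_z^j\vv u_0\|_{L^2}^2$ together with the Hardy term $\|\f{(\pa_z^j\ur_0,\pa_z^j\ut_0)}r\|_{L^2}^2$; multiplying by $N^{2(\f14-\eta)}$ gives the first line of $D_j$. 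For the $k$-modes I would pair $\pa_z^j$ of \eqref{eqtuk} with $\pa_z^j(\ur_k,\vt_k,\uz_k)$ and $\pa_z^j$ of \eqref{eqtvk} with $\pa_z^j(\vr_k,\ut_k,\vz_k)$ and add. The key structural points are that the pressure terms $\pa_r P_k$, $-\f{kN}r P_k$, $\pa_z P_k$ cancel through the divergence relation $\pa_r\ur_k+\f{\ur_k}r+\pa_z\uz_k+kN\f{\vt_k}r=0$, and that the zeroth-order coupling $\f{2kN}{r^2}$ is absorbed, for $N$ large, into the enhanced dissipation from $\f{1+k^2N^2}{r^2}$, leaving the positive quantity $\f{k^2N^2}2\|\f{\pa_z^j\vv u_k}r\|_{L^2}^2$ demanded by the supremum in \eqref{defDj}.

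Next I would bound the nonlinear terms. In the zero-mode estimate the right-hand side of \eqref{eqtu0} is the self-interaction $\f{(\ut_0)^2}r$ plus the series $\sum_{k\ge1}$ of quadratic mode-$k$ terms, tested against $\pa_z^j\vv u_0$; in the $k$-mode estimates the right-hand sides of \eqref{eqtuk}, \eqref{eqtvk} consist of the transport-type couplings with the zero mode (the terms $-\wt u_k\cdot\wt\nabla\vv u_0$ and $\f{kN}r\ut_0(\cdots)$) together with the convolution forces $F_k,G_k$ of \eqref{eqtukq}, \eqref{eqtvkq}. The couplings with $\ut_0$ pair the Hardy norm $\|\f{\ut_0}r\|$, controlled by $D_0$, against $kN\|\f{\vv u_k}r\|$, controlled by the $\sup_k$ part of $D_j$, and are what tie the $D_0$- and $D_1$-estimates together into the mixed powers of \eqref{ANScylE1}, \eqref{ANScylE2}. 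Every resulting trilinear quantity I would estimate by one-dimensional and anisotropic Sobolev inequalities in $(r,z)$, trading one horizontal derivative for the dissipation norm $\|\pa_r\pa_z^j\cdot\|_{L^2}$ and one vertical factor $\|f\|_{L^\infty_z}\lesssim\|f\|_{L^2_z}^{1/2}\|\pa_z f\|_{L^2_z}^{1/2}$ for the $\pa_z$-energy carried by $D_1$ (whence the recurrent exponent $\f14$), and using $|f|\le|\f fr|$ from \eqref{pointwisecyl} to turn zeroth-order factors into Hardy norms. The sums $\sum_{k_1+k_2=k}$ and $\sum_{|k_1-k_2|=k}$ I would treat as discrete convolutions, so that after taking $\sup_k$ against $N^{2\min\{\eta(k-2),(\f12-\eta-\delta)(m-j)\}}k^{2\sigma(m-j)}$ the exponential weight splits across $k=k_1+k_2$, while the polynomial weight is handled by $k^{\sigma(m-j)}\lesssim(k_1k_2)^{\sigma(m-j)}$ and the summability of $\sum k^{-2\sigma(m-j)}$, which holds since $\sigma>\f1{2m-3}\ge\f1{2(m-j)}$ for $j\in\{0,1\}$.

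The heart of the matter, and the origin of the fractional exponents $\f1{m-j}$ and $\f1{2(m-j)}$ and of the top-order factors in \eqref{ANScylE1}--\eqref{ANScylE2}, is the loss of vertical dissipation: a $z$-derivative falling on a factor cannot be recovered from any smoothing norm. The point is that, after the reductions above, each such term carries only a single vertical derivative beyond the $\pa_z^j$ level, on which I would use the one-dimensional Gagliardo--Nirenberg inequality $\|\pa_z^{j+1}f\|_{L^2_z}\lesssim\|\pa_z^j f\|_{L^2_z}^{1-\f1{m-j}}\|\pa_z^m f\|_{L^2_z}^{\f1{m-j}}$, interpolating between the $D_j$-controlled quantity and the top-order quantities $\|\f{\pa_z^m\vv u_k}r\|_{L^2_t(L^2)}$, $\|\f{\pa_z^m(\ur_0,\ut_0)}r\|_{L^2_t(L^2)}$ supplied by Proposition \ref{propANScyl}. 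This pulls the top-order norm out as an almost fixed, small coefficient to the power $\f1{m-j}$ while lowering one power of $D_j$ by $\f1{2(m-j)}$; restoring the weights and taking $\sup_{t\le T}$, $\sup_k$ then yields \eqref{ANScylE1} and \eqref{ANScylE2}.

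The main obstacle is exactly this simultaneous bookkeeping of the vertical derivative and of the $k$-weight. One must arrange each nonlinear term so that, beyond $\pa_z^j$ ($j\le1$), only a single top-order $\pa_z^m$ factor survives --- everything else being interpolated --- and at the same time ensure that the weight $N^{2\min\{\eta(k-2),(\f12-\eta-\delta)(m-j)\}}$, which switches from exponential growth to a constant at $k=A_j$, survives the convolution in both regimes, the polynomial gain $k^{\sigma(m-j)}$ taking over the decay once the exponential saturates. The transport commutator in the $j=1$ zero-mode estimate and the high-frequency interactions in $F_k,G_k$ are where the two difficulties reinforce one another; it is this interplay that dictates the refined, weight-dependent form of the energy functional $D_j$ in \eqref{defDj}.
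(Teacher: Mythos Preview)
Your outline matches the paper's strategy closely: the $\pa_z^j$-energy identities on \eqref{eqtu0}--\eqref{eqtvk}, the pressure cancellations, the absorption of the $\f{2kN}{r^2}$ coupling, and above all the Gagliardo--Nirenberg step
\[
\|\pa_z^{j+1}f\|_{L^2}\lesssim\|\pa_z^{j}f\|_{L^2}^{1-\f1{m-j}}\|\pa_z^{m}f\|_{L^2}^{\f1{m-j}}
\]
are exactly what the paper does (see \eqref{6.2}--\eqref{6.3}, \eqref{6.12}, \eqref{6.17}), and your identification of this interpolation as the source of the exponents $\f1{m-j}$ and of the top-order factors is correct.

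Where the paper is more careful than your sketch is in the treatment of the convolution forces $F_k,G_k$. Your symmetric-looking claim that ``the polynomial weight is handled by $k^{\sigma(m-j)}\lesssim(k_1k_2)^{\sigma(m-j)}$ and the summability of $\sum k^{-2\sigma(m-j)}$'' is not quite what is needed. The paper instead uses an \emph{asymmetric} H\"older splitting $L^{\f{2}{1-2\ve}}_t(L^{\f1\ve}_\h)\times L^{\f1\ve}_t(L^{\f{2}{1-2\ve}}_\h)$ with $\ve=\f12-\sigma$ (see \eqref{6.9a}--\eqref{6.9b}), calibrated so that the resulting summability condition is exactly $\sigma(m-\f32)+\f12>1$, i.e.\ $\sigma>\f1{2m-3}$ as in the statement. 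A symmetric $L^4\times L^4$ split does not reach this threshold for all $m\geq3$. Second, the exponential part of the weight $N^{\min\{\eta(k-2),(\f12-\eta-\delta)(m-j)\}}$ does \emph{not} simply split across $k=k_1+k_2$: it saturates to a constant once $k$ exceeds $A_j$, and the paper runs a case analysis on whether $k_1$ is below or above this threshold to show that $\sum_{k_1+k_2=k}\Theta_{k_1}\Theta_{k_2}^{1/2}\wt\Theta_{k_2}^{1/2}\lesssim\Theta_k$ up to harmless $N$-powers. These two refinements are the content you should add; the rest of your plan is correct.
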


 We are now in a position to complete the proof of Theorem \ref{thm2}.

\begin{proof}[Proof of Theorem \ref{thm2}] Once again we shall only present the {\it a priori} estimates
for smooth enough solutions of the systems \eqref{eqtu0}, \eqref{eqtuk} and \eqref{eqtvk} for $\nu=0.$
Let us denote
\begin{equation}\label{2.15}
T^\star_2\eqdef \sup\bigl\{\,t>0:\, D_0(t)\leq 4D_0(0)\andf D_1(t) \leq 4D_1(0)\, \bigr\}.
\end{equation}
Then we get, by using Proposition \ref{propANScyl} and the assumption \eqref{smallANScyl},  that for any $t\leq T^\star_2$,
\begin{align*}
N^{\max\left\{\eta-\f14,2\left(\eta-\f14\right)\right\}}
D_0^\f14(t) D_1^{\f14}(t)
&\leq 2N^{\max\left\{\delta-\f14,\eta+\delta-\f12\right\}}
\|\vv\al\|_{L^2}^{\f12}
\|\p_z\vv\al\|_{L^2}^{\f12}\\
&\leq 2\e,
\end{align*}
and
\begin{align*}
&\Bigl(N^{\f{m-1}{m}\left(\eta-\f14\right)}
\bigl\|\f{\pa_z^m (\ur_0,\ut_0)}r\bigr\|_{L^2_t(L^2)}^{\f1m}
+N^{\eta-\delta}\sup_{k\in\N^+}
\bigl\|\f{\pa_z^m\vv u_k}r\bigr\|
_{L^2_t(L^2)}^{\f1m}\Bigr)
D_0^{\f14-\f1{2m}}(t)D_1^\f14(t)\\
&\lesssim \bigl(N^{-\f{m+1}{m}\left(\f14-\delta\right)}
+N^{-\f1m\left(\f32-\delta-\eta\right)}\bigr)
\|\vv\al\|_{L^2}^{\f12-\f1m}
\|\p_z\vv\al\|_{L^2}^{\f12}
\sum_{j=0}^{m+1}
\|\pa_z^j\vv\al\|_{L^2}^{\f2m}\\
&\quad+N^{-\f1m(1-\eta)}\|\vv\al\|_{L^2}^{\f12-\f1m}
\|\p_z\vv\al\|_{L^2}^{\f12}
\|\pa_z^m\vv\al\|_{L^2}^{\f1m}\\
&\lesssim N^{-\f{m+1}{m}(\f14-\delta)}
\sum_{j=0}^{m+1}
\|\pa_z^j\vv\al\|_{L^2}^{\f{m+1}{m}}
+N^{-\f1m(1-\eta)}
\sum_{j=0}^{m+1}
\|\pa_z^j\vv\al\|_{L^2}
\Bigl(1+N^{-\f1m\left(\f12-\delta\right)}\sum_{j=0}^{m+1}
\|\pa_z^j\vv\al\|_{L^2}^{\f1m}\bigr)\\
&\lesssim\e^{\f{m+1}{m}}+\e\bigl(1
+N^{-\f1{4m}}\e^{\f1m}\bigr),
\end{align*}
and
\begin{align*}
&\Bigl(N^{\f{m-2}{m-1}\left(\eta-\f14\right)}
\bigl\|\f{\pa_z^m (\ur_0,\ut_0)}r\bigr\|_{L^2_t(L^2)}^{\f1{m-1}}
+N^{\eta-\delta}\sup_{k\in\N^+}
\bigl\|\f{\pa_z^m\vv u_k}r\bigr\|
_{L^2_t(L^2)}^{\f1{m-1}}\Bigr)
D_0^\f14(t) D_1^{\f14-\f1{2(m-1)}}(t)\\
&\lesssim N^{-\f{m}{m-1}\left(\f14-\delta\right)}
\sum_{j=0}^{m+1}
\|\pa_z^j\vv\al\|_{L^2}^{\f m{m-1}}
+N^{-\f{1-\eta}{m-1}}
\sum_{j=0}^{m+1}
\|\pa_z^j\vv\al\|_{L^2}
\Bigl(1+N^{-\f1{m-1}\left(\f12-\delta\right)}\sum_{j=0}^{m+1}
\|\pa_z^j\vv\al\|_{L^2}^{\f1{m-1}}\bigr)\\
&\lesssim\e^{\f{m}{m-1}}+N^{-\f{1-\eta}{m(m-1)}}
\e\bigl(1+N^{-\f1{4(m-1)}}\e^{\f1{m-1}}\bigr).
\end{align*}
By substituting the above  estimates into \eqref{ANScylE1} and \eqref{ANScylE2}, we deduce that for any $t\leq T^\star_2$
$$ D_j(t)\leq D_j(0)+C\e D_j(t),\quad\mbox{for}\ \ j\in\{0,1\}.$$
In particular, if we take $\e$ to be so small that
$C\e\leq\f12$, we infer
\begin{equation}\label{2.16}
D_j(t)\leq 2D_j(0),\quad\mbox{for}\ \ j\in\{0,1\}\andf t\leq T_2^\star,
\end{equation}
which contradicts with \eqref{2.15} unless $T^\star_2=\infty.$ This in turn shows that
 $T^\star_2=\infty$, and \eqref{2.16} holds for any $t>0$. This completes the proof Theorem \ref{thm2}.
\end{proof}

\section{Anisotropic type Sobolev inequalities}\label{sec3}

In this section, we shall present anisotropic type Sobolev inequalities in cylindrical coordinates.
These inequalities will be  the most crucial ingredients used in the proof of the main results in this paper, and they
might be used elsewhere for problems in curved coordinates.

Let $\cG_\h$ be the set of smooth enough functions defined in the unit ball $B_{\h}\eqdefa \bigl\{ (x_1,x_2)\in\R^2, \ x_1^2+x_2^2<1 \bigr\}$ of $\R_\h^2$,
which vanish on the boundary of  $B_{\h}$. Then it follows from   the classical Sobolev inequality that
\begin{equation}\label{3.1}
\|f\|_{L^p_\h}\lesssim\|f\|_{L^2_\h}^\f2p
\|\nablah f\|_{L^2_\h}^{1-\f2p},
\quad\forall\ f\in\cG_\h,~2\leq p<\infty.
\end{equation}
Here and in the rest of this section, we always denote $\|f\|_{L^p_\h}\eqdefa \Bigl(\int_{B_\h}|f(x_\h)|^p\,dx_\h\Bigr)^{\f1p}.$

While we get, by using 1-D Sobolev inequality and Minkowski's inequality, that
\begin{equation}\begin{split}\label{3.2}
\|f\|_{L^p_\h}&\lesssim
\bigl\|\|f(\cdot,x_2)\|_{L^2_{x_1}}^{\f12+\f1p}
\|\pa_1 f(\cdot,x_2)\|_{L^2_{x_1}}^{\f12-\f1p}
\bigr\|_{L^p_{x_2}}\\
&\lesssim\|f\|_{L^2_{x_1}
(L^{\f{4+2p}{6-p}}_{x_2})}^{\f12+\f1p}
\|\pa_1 f\|_{L^2_{x_1}(L^2_{x_2})}^{\f12-\f1p}\\
&\lesssim\|f\|_{L^2}^{\f2p}
\|\pa_2 f\|_{L^2}^{\f12-\f1p}
\|\pa_1 f\|_{L^2}^{\f12-\f1p},
\quad\forall\ f\in\cG_\h,~2\leq p\leq6.
\end{split}\end{equation}

Compared with \eqref{3.1}, we call \eqref{3.2} to be  anisotropic type Sobolev  inequality. Then a natural question arises:
 does there exist some similar type of  anisotropic inequalities in the curved coordinates?
 In fact, for the polar coordinates in $\R_\h^2$, where $\nablah=e_r\pa_r+e_\th\f{\pa_\th}r$ with $e_r\eqdefa (\cos\theta,\sin\theta)$
  and $e_\theta\eqdefa (-\sin\theta,\cos\theta),$
  we have the following result:

\begin{lem}\label{lem3.1}
{\sl Let $f\in\cG_\h$ which satisfies
\begin{equation}\label{3.3}
\int_0^{2\pi}f(r,\th)\,d\th=0,
\quad\forall\ r\in\R^+.
\end{equation}
Then for any $2\leq p\leq6$, there holds
\begin{equation}\label{3.4}
\|f\|_{L^p_{\rm h}}
\lesssim\|f\|_{L^2_\h}^{\f2p}
\Bigl(\|\pa_r f\|_{L^2_\h}^{\f12-\f1p}
+\bigl\|\f{\pa_\th f}r\bigr\|
_{L^2_\h}^{\f12-\f1p}\Bigr)
\bigl\|\f{ \pa_\th f}r\bigr\|
_{L^2_\h}^{\f12-\f1p}.
\end{equation}
}\end{lem}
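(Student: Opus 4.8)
The plan is to mimic the anisotropic splitting that produced \eqref{3.2}, but adapted to polar coordinates so that the two transversal derivatives $\pa_r$ and $r^{-1}\pa_\th$ play the roles of $\pa_1$ and $\pa_2$. The key structural point is that for functions satisfying the mean-zero condition \eqref{3.3}, the angular variable genuinely behaves like a periodic one-dimensional variable on which a Poincar\'e-type inequality is available, so that control in $\th$ can always be transferred onto the weighted angular derivative $r^{-1}\pa_\th f$. First I would freeze the radial variable $r$ and work on each circle $\{r=\mathrm{const}\}$, viewing $f(r,\cdot)$ as a $2\pi$-periodic function of $\th$ with zero average. On this circle the one-dimensional Gagliardo--Nirenberg/Sobolev inequality gives, for $2\leq p\leq 6$,
\begin{equation*}
\|f(r,\cdot)\|_{L^p_\th}\lesssim \|f(r,\cdot)\|_{L^2_\th}^{\f12+\f1p}\|\pa_\th f(r,\cdot)\|_{L^2_\th}^{\f12-\f1p},
\end{equation*}
where the mean-zero hypothesis \eqref{3.3} is exactly what licenses the endpoint control of the $L^2_\th$ norm by $\|\pa_\th f\|_{L^2_\th}$ and removes the additive lower-order term that would otherwise appear.

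Next I would insert this circlewise estimate into the measure $r\,dr$ and handle the radial integration by Minkowski's inequality, exchanging the $L^p_r$ and $L^p_\th$ orders so as to bring the radial factors out. The goal is to write $\|f\|_{L^p_\h}$ as a product of an $L^2_\h$ factor carrying the weight $\f2p$ and two factors each carrying the exponent $\f12-\f1p$; the two derivative factors should come out as $\|\pa_r f\|_{L^2_\h}$ (or the angular substitute) for one power and $\|r^{-1}\pa_\th f\|_{L^2_\h}$ for the other. Concretely I expect to estimate the radial behaviour by a one-dimensional Sobolev inequality in $r$ applied to the slice norms, which is where the $\pa_r f$ contribution enters; the weight $r^{-1}$ attached to $\pa_\th$ is forced by the polar volume element and by the requirement that the left side scale correctly, and it is precisely the object $\bigl\|\f{\pa_\th f}r\bigr\|_{L^2_\h}$ that the statement \eqref{3.4} features.

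The main obstacle, and the place where the polar geometry differs essentially from the flat case \eqref{3.2}, is the bookkeeping of the weight $r^{-1}$ near the origin $r=0$: the angular derivative naturally produces $\pa_\th f$ with measure $r\,dr\,d\th$, whereas the inequality demands the weighted quantity $r^{-1}\pa_\th f$, so one must be careful that no divergent factor of $r$ is lost when passing between $\|\pa_\th f\|_{L^2_\h}$ and $\bigl\|\f{\pa_\th f}r\bigr\|_{L^2_\h}$. This is also why the first derivative factor in \eqref{3.4} appears as the sum $\|\pa_r f\|_{L^2_\h}^{\f12-\f1p}+\bigl\|\f{\pa_\th f}r\bigr\|_{L^2_\h}^{\f12-\f1p}$ rather than $\|\pa_r f\|$ alone: the radial Sobolev step needs full control of $\na_\h f$ in $L^2_\h$ along the slices, and in polar coordinates the horizontal gradient is $\na_\h f=\vv e_r\pa_r f+\vv e_\th\,r^{-1}\pa_\th f$, so both components must be allowed to contribute. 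I would therefore organize the argument so that the genuinely one-dimensional angular estimate supplies one power of $\bigl\|\f{\pa_\th f}r\bigr\|$, and the radial Sobolev estimate, applied to the slicewise $L^2_\th$-norms, supplies the remaining power through the full horizontal gradient $|\na_\h f|\lesssim |\pa_r f|+\bigl|\f{\pa_\th f}r\bigr|$, thereby assembling exactly the right-hand side of \eqref{3.4}.
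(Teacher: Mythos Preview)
Your plan is correct and follows essentially the same route as the paper's proof: angular Gagliardo--Nirenberg on each circle, then Minkowski/H\"older and a one-dimensional Sobolev step in $r$, finishing with Poincar\'e (from the mean-zero hypothesis) to convert $\|f/r\|_{L^2_\h}$ into $\|\pa_\th f/r\|_{L^2_\h}$. The one point to sharpen is the mechanism producing the sum in the first factor: the paper splits the polar weight as $r^{1/p}=(r^{1/2})^{1/2+1/p}(r^{-1/2})^{1/2-1/p}$ and applies the radial Sobolev inequality to $r^{1/2}f$, so the extra term arises not from ``the full horizontal gradient'' but from the product rule $\pa_r(r^{1/2}f)=r^{1/2}\pa_r f+\tfrac12 r^{-1/2}f$, after which Poincar\'e turns $\|f/r\|_{L^2_\h}$ into $\|\pa_\th f/r\|_{L^2_\h}$.
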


\begin{proof}
In view of  \eqref{3.3}, we get, by using 1-D Sobolev embedding inequality, that
\begin{equation}\begin{split}\label{3.5}
\|f\|_{L^p_\h}
&=\bigl\|r^{\f1p}\|f(r,\cdot)\|_{L^p(d\th)}\|_{L^p(dr)}\\
&\lesssim
\bigl\|r^{\f1p}\|f(r,\cdot)\|
_{L^2(d\th)}^{\f12+\f1p}
\|\pa_\th f(r,\cdot)\|_{L^2(d\th)}^{\f12-\f1p}\bigr\|_{L^p(dr)}\\
&= \Bigl\|\bigl\|r^{\f12}f(r,\cdot)\bigr\|
_{L^2(d\th)}^{\f12+\f1p}
\bigl\|r^{-\f12}\pa_\th f(r,\cdot)\bigr\|
_{L^2(d\th)}^{\f12-\f1p}\Bigr\|_{L^p(dr)}\\
&\lesssim\bigl\|r^{\f12}f\bigr\|
_{L^2\bigl(d\th;L^{\f{4+2p}{6-p}}(dr)\bigr)}^{\f12+\f1p}
\bigl\|r^{-\f12}\pa_\th f\bigr\|
_{L^2(drd\th)}^{\f12-\f1p}\\
&\lesssim\bigl\|r^{\f12}f\bigr\|_{L^2(drd\th)}^{\f2p}
\bigl\|\pa_r\bigl(r^{\f12}f\bigr)\bigr\|
_{L^2(drd\th)}^{\f12-\f1p}
\bigl\|r^{-\f12}\pa_\th f\bigr\|
_{L^2(drd\th)}^{\f12-\f1p}\\
&\lesssim\|f\|_{L^2_\h}^{\f2p}
\Bigl(\|\pa_r f\|_{L^2_\h}^{\f12-\f1p}
+\bigl\|\f{f}r\bigr\|
_{L^2_\h}^{\f12-\f1p}\Bigr)
\bigl\|\f{\pa_\th f}r\bigr\|
_{L^2_\h}^{\f12-\f1p}.
\end{split}\end{equation}
While it follows from  \eqref{3.3} and Poincar\'e's inequality that
$$\bigl\|\f{f}r\bigr\|_{L^2_\h}
\leq2\pi\bigl\|\f{\pa_\th f}r\bigr\|_{L^2_\h}.$$
By substituting the above inequality into \eqref{3.5}, we obtain  \eqref{3.4}.
\end{proof}

\begin{cor}
{\sl Let $g(r)\in\cG_\h$ be a radial function. Then one has
\begin{equation}\label{3.6}
\|g\|_{L^4_{\rm h}}
\lesssim\| g\|_{L^2_{\rm h}}^\f12
\bigl(\|\p_r g\|_{L^2_{\rm h}}^\f14
+\bigl\|\f{g}r\bigr\|_{L^2_{\rm h}}^\f14\bigr) \bigl\|\f{g}r\bigr\|_{L^2_{\rm h}}^\f14.
\end{equation}
}\end{cor}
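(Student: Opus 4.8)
The plan is to deduce \eqref{3.6} from the inequality \eqref{3.4} of Lemma~\ref{lem3.1}. The obstruction to applying Lemma~\ref{lem3.1} directly is that a radial function violates its hypothesis \eqref{3.3}: since $\pa_\th g\equiv0$ one has $\int_0^{2\pi}g\,d\th=2\pi g\neq0$, and feeding $g$ into \eqref{3.4} would even produce the absurd bound $\|g\|_{L^4_\h}\lesssim0$. The idea is therefore to replace $g$ by a $\th$-dependent companion that \emph{does} have vanishing angular mean while keeping every relevant norm comparable to that of $g$, the crucial gain being that for this companion the quantity $\|\pa_\th(\cdot)/r\|_{L^2_\h}$ appearing in \eqref{3.4} becomes comparable to $\|g/r\|_{L^2_\h}$.

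Concretely, I would set $f(r,\th)\eqdef g(r)\cos\th$. It vanishes on $\{r=1\}$ because $g(1)=0$, and it satisfies \eqref{3.3} since $\int_0^{2\pi}\cos\th\,d\th=0$. Choosing the factor $\cos\th$ is what makes the computation clean: separation of variables turns each norm into an elementary angular constant times a radial integral. Using $\int_0^{2\pi}\cos^2\th\,d\th=\int_0^{2\pi}\sin^2\th\,d\th=\pi$ and $\int_0^{2\pi}\cos^4\th\,d\th=\f{3\pi}4$ together with $\pa_\th f=-g\sin\th$ and $\pa_r f=(\pa_r g)\cos\th$, one finds
\begin{equation*}
\|f\|_{L^4_\h}^4=\f38\|g\|_{L^4_\h}^4,\qquad
\|f\|_{L^2_\h}^2=\f12\|g\|_{L^2_\h}^2,\qquad
\|\pa_r f\|_{L^2_\h}^2=\f12\|\pa_r g\|_{L^2_\h}^2,\qquad
\Bigl\|\f{\pa_\th f}r\Bigr\|_{L^2_\h}^2=\f12\Bigl\|\f{g}r\Bigr\|_{L^2_\h}^2.
\end{equation*}
The last identity is precisely the mechanism that converts the $\|\pa_\th f/r\|_{L^2_\h}$ on the right of \eqref{3.4} into the desired $\|g/r\|_{L^2_\h}$. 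Applying \eqref{3.4} with $p=4$ to $f$ and substituting these four identities yields \eqref{3.6}, up to an absolute constant.

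The one point deserving care is that $f=g\cos\th$ is in general not smooth at the origin, so it is not literally an element of $\cG_\h$. I would dispose of this as follows. One may assume $\|g/r\|_{L^2_\h}<\infty$, for otherwise the right-hand side of \eqref{3.6} is infinite and there is nothing to prove; this finiteness forces $g$ to vanish at $r=0$ and, combined with $g,\pa_r g\in L^2_\h$, guarantees $f\in H^1_0(B_\h)$ with $\pa_\th f/r\in L^2_\h$. Since the quantities in \eqref{3.4} are all continuous with respect to the $L^2_\h$, $\pa_r$-in-$L^2_\h$ and $(1/r)$-weighted $L^2_\h$ norms, and since the mean-zero constraint \eqref{3.3} is preserved under the relevant limits (one approximates by smooth functions and subtracts their angular averages), a standard density argument extends \eqref{3.4} from $\cG_\h$ to $f$, which is all that is needed. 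This extension is the only step that is not a direct computation, but it is routine.
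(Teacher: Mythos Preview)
Your proposal is correct and is exactly the paper's approach: the paper's proof consists of the single observation that $f(r,\th)=g(r)\cos\th$ satisfies \eqref{3.3}, followed by applying Lemma~\ref{lem3.1} with this $f$ and $p=4$. Your additional paragraph on the regularity of $g\cos\th$ at the origin is a technicality the paper simply glosses over (``smooth enough''), but your treatment of it is fine.
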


\begin{proof}
 It is obvious that $g(r)\cos\th$ satisfies \eqref{3.3}. Then  we deduce  \eqref{3.6} by applying
  Lemma \ref{lem3.1} with $f(r,\th)=g(r)\cos\th$ and $p=4$.
\end{proof}

We mention that there is a restriction of $2\leq p\leq6$ in \eqref{3.2} and \eqref{3.4}.
Yet  for radial functions, we can use an alternative approach  to get rid of this restriction.

\begin{lem}\label{lem3.2}
{\sl Let $g(r)\in\cG_\h$ be a radial function. Then one has
\begin{equation}\label{3.7}
\|g\|_{L^p_{\rm h}}
\lesssim\|g\|_{L^2_\h}^{\f2p}
\Bigl(\|\pa_r g\|_{L^2_\h}^{\f12-\f1p}
+\bigl\|\f{g}r\bigr\|
_{L^2_\h}^{\f12-\f1p}\Bigr)
\bigl\|\f{g}r\bigr\|_{L^2_\h}^{\f12-\f1p},
\quad\forall\ 2\leq p<\infty.
\end{equation}
}\end{lem}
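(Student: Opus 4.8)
The plan is to exploit the radial symmetry to reduce everything to one-dimensional weighted integrals in $r$, and then---in contrast to the proof of Lemma \ref{lem3.1}, which relied on a one-dimensional Gagliardo--Nirenberg step and was therefore confined to $2\le p\le 6$---to establish a single pointwise (i.e. $L^\infty$) bound for $g$ and interpolate against it. Since interpolation between $L^\infty$ and $L^2$ costs nothing for the exponent, this route works for every $2\le p<\infty$, which is exactly the improvement \eqref{3.7} records.

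First I would record the exact one-dimensional reductions valid for a radial $g$, namely $\|g\|_{L^p_\h}^p=2\pi\int_0^1|g(r)|^p\,r\,dr$, together with $\|g\|_{L^2_\h}^2=2\pi\int_0^1 g^2\,r\,dr$, $\|\pa_r g\|_{L^2_\h}^2=2\pi\int_0^1(\pa_r g)^2\,r\,dr$ and $\bigl\|\f{g}r\bigr\|_{L^2_\h}^2=2\pi\int_0^1 g^2\,r^{-1}\,dr$. I would then note that one may assume $\bigl\|\f{g}r\bigr\|_{L^2_\h}<\infty$, since otherwise the right-hand side of \eqref{3.7} is infinite and there is nothing to prove; this finiteness is what forces the correct decay of $g$ at the origin.

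The heart of the argument is the pointwise bound. Since $g\in\cG_\h$ vanishes at $r=1$, I would write $g(r)^2=-2\int_r^1 g(s)\,\pa_s g(s)\,ds$ and split the weight as $|g\,\pa_s g|=\bigl|s^{-\f12}g\bigr|\,\bigl|s^{\f12}\pa_s g\bigr|$. The Cauchy--Schwarz inequality in $ds$ then gives, uniformly in $r$,
\begin{equation*}
g(r)^2\le 2\Bigl(\int_0^1 s^{-1}g^2\,ds\Bigr)^{\f12}\Bigl(\int_0^1 s(\pa_s g)^2\,ds\Bigr)^{\f12}
\lesssim\bigl\|\f{g}r\bigr\|_{L^2_\h}\|\pa_r g\|_{L^2_\h},
\end{equation*}
whence $\|g\|_{L^\infty_\h}\lesssim\bigl\|\f{g}r\bigr\|_{L^2_\h}^{\f12}\|\pa_r g\|_{L^2_\h}^{\f12}$. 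This is the only delicate point: it is precisely here that $g(1)=0$ and the finiteness of $\bigl\|\f{g}r\bigr\|_{L^2_\h}$ (equivalently, the vanishing of $g$ at $r=0$) are used to make the boundary term and the behaviour near the origin harmless. I do not expect a genuine obstacle beyond this bookkeeping.

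Finally I would interpolate: from $|g|^p\le\|g\|_{L^\infty_\h}^{p-2}|g|^2$ and integration over $B_\h$ one gets $\|g\|_{L^p_\h}\le\|g\|_{L^\infty_\h}^{1-\f2p}\|g\|_{L^2_\h}^{\f2p}$. Substituting the pointwise estimate yields
\begin{equation*}
\|g\|_{L^p_\h}\lesssim\|g\|_{L^2_\h}^{\f2p}\|\pa_r g\|_{L^2_\h}^{\f12-\f1p}\bigl\|\f{g}r\bigr\|_{L^2_\h}^{\f12-\f1p},
\end{equation*}
which is in fact slightly stronger than \eqref{3.7} and implies it immediately, since $\|\pa_r g\|_{L^2_\h}^{\f12-\f1p}\le\|\pa_r g\|_{L^2_\h}^{\f12-\f1p}+\bigl\|\f{g}r\bigr\|_{L^2_\h}^{\f12-\f1p}$. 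This completes the plan; as a sanity check, taking $p=4$ recovers the Corollary \eqref{3.6}.
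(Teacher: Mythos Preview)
Your proof is correct and takes a genuinely different route from the paper's. The paper proceeds by the change of variables $\tau=r^{4/(p+2)}$, $h(\tau)=\tau^{1/2}g(\tau^{(p+2)/4})$, which converts $\|g\|_{L^p_\h}$ into a flat $\|h\|_{L^p(d\tau)}$ and then invokes the one-dimensional Gagliardo--Nirenberg inequality $\|h\|_{L^p}\lesssim\|h'\|_{L^2}^{\f12-\f1p}\|h\|_{L^2}^{\f12+\f1p}$; computing $h'$ and undoing the substitution produces the three weighted integrals in \eqref{3.7}. Your argument instead gets an $L^\infty$ bound directly from the fundamental theorem of calculus plus a weighted Cauchy--Schwarz, then interpolates against $L^2$. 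Your route is more elementary and in fact yields the sharper estimate $\|g\|_{L^p_\h}\lesssim\|g\|_{L^2_\h}^{2/p}\|\pa_r g\|_{L^2_\h}^{\f12-\f1p}\|g/r\|_{L^2_\h}^{\f12-\f1p}$ without the extra $\|g/r\|_{L^2_\h}^{\f12-\f1p}$ summand; the paper's approach, by contrast, packages the argument as an instance of a standard one-dimensional interpolation inequality after a single change of variables, which some readers may find more systematic.
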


\begin{proof}
Let $\tau=r^{\f4{p+2}}$ and $h(\tau)=r^{\f2{p+2}}g(r)=\tau^\f12 g(\tau^{\f{p+2}4}),$ we  write
\begin{align*}
\|g\|_{L^p_\h}=C\Bigl(\int_0^1 \bigl|r^{\f2{p+2}}g(r)\bigr|^p\, d(r^{\f4{p+2}})\Bigr)^{\f1p}
=C\Bigl(\int_0^1|h(\tau)|^p\,d \tau\Bigr)^{\f1p}
\end{align*}
Then we get, by using 1-D Sobolev  inequality and the fact: $$h'(\tau)=\f12\tau^{-\f12}g(\tau^{\f{p+2}4})
+\f{p+2}4\tau^{\f p4}g'(\tau^{\f{p+2}4}),$$
that
\begin{align*}
\|g\|_{L^p_\h} &\lesssim \Bigl(\int_0^1|h'(\tau)|^2 \,d \tau \Bigr)^{\f14-\f1{2p}}
\Bigl(\int_0^1|h(\tau)|^2 \,d\tau\Bigr)^{\f14+\f1{2p}}\\
&\lesssim \Bigl(\int_0^1 \bigl|g(\tau^{\f{p+2}4})\bigr|^2 \,\f{d\tau}\tau
+\int_0^1\bigl|g'(\tau^{\f{p+2}4})\bigr|^2
\tau^{\f p2} \,d\tau\Bigr)^{\f14-\f1{2p}}
\Bigl(\int_0^1\bigl|g(\tau^{\f{p+2}4})\bigr|^2\tau \,d \tau\Bigr)^{\f14+\f1{2p}}\\
&\lesssim \Bigl(\int_0^1|g(r)|^2 \,\f{dr}r
+\int_0^1|g'(r)|^2 r\,dr\Bigr)^{\f14-\f1{2p}}
\Bigl(\int_0^1|g(r)|^2r \,dr\Bigr)^{\f1p}
\Bigl(\int_0^1|g(r)|^2\,\f{dr}r\Bigr)^{\f14-\f1{2p}}.
\end{align*}
This completes the proof of this lemma.
\end{proof}

\begin{rmk}
It is easy to verify that all the inequalities derived in this section works also for the whole space $\R_\h^2$.
\end{rmk}

We end this section with some  estimates of $\vv u_0$ and $\vv u_k$, which will be frequently used in the proof of Theorem \ref{thm1}.

\begin{lem}\label{lembddE}
{\sl Let $E_0(t)$ and $E_1(t)$ be defined by \eqref{def:E}, we have
\begin{equation}\label{lem3.3eq1}
\|\vv u_0\|_{L^4_t(L^4_{\rm h}(L^2_{\rm v}))} \lesssim N^{\eta-\f14}E_0^\f12(t), \quad
\|\p_z \vv u_0\|_{L^4_t(L^4_{\rm h}(L^2_{\rm v}))} \lesssim N^{\eta-\f14}E_1^\f12(t),
\end{equation}
and for any $k\geq 1$ that
\begin{equation}\label{lem3.3eq2}
\|\vv u_k\|_{L^4_t(L^4_{\rm h}(L^2_{\rm v}))} \lesssim k^{-\f54} N^{\eta(2-k)-\f14} E_0^\f12(t),
\quad \|\p_z\vv u_k\|_{L^4_t(L^4_{\rm h}(L^2_{\rm v}))} \lesssim k^{-\f54} N^{\eta(2-k)-\f14} E_1^\f12(t),
\end{equation}
and
\begin{subequations}\label{lem3.3eq3}
\begin{gather}
\label{S3eq1a}\|\vv u_0\|_{L^4_t(L^4_{\rm h}(L^\infty_{\rm v}))}+\|\wt\nabla \vv u_0\|_{L^2_t(L^2_{\rm h}(L^\infty_{\rm v}))}+\bigl\|\f{(\ur_0,\ut_0)}r\bigr\|_{L^2_t(L^2_{\rm h}(L^\infty_{\rm v}))}\lesssim N^{\eta-\f14} E_0^\f14(t)E_1^\f14(t),\\
\label{S3eq1b}\|\vv u_k\|_{L^4_t(L^4_{\rm h}(L^\infty_{\rm v}))} \lesssim k^{-\f54}N^{\eta(2-k)-\f14} E_0^\f14(t)E_1^\f14(t),\\
\label{S3eq1c}\|\wt\nabla \vv u_k\|_{L^2_t(L^2_{\rm h}(L^\infty_{\rm v}))}+kN\bigl\|\f{\vv u_k}r\bigr\|_{L^2_t(L^2_{\rm h}(L^\infty_{\rm v}))}
\lesssim k^{-1}N^{\eta(2-k)} E_0^\f14(t)E_1^\f14(t).
 \end{gather}
\end{subequations}
}\end{lem}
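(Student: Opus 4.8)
The plan is to derive all six estimates from the anisotropic Sobolev inequalities of this section combined with the definitions of $E_0(t)$ and $E_1(t)$ in \eqref{def:E}. The central observation is that for each fixed $z$, the Fourier coefficients $\vv u_k(t,r,z)$ are (essentially) radial functions of $r$, so Lemma \ref{lem3.2} applies in the horizontal slice with $p=4$, giving a pointwise-in-$(t,z)$ bound
\begin{equation*}
\|\vv u_k(t,\cdot,z)\|_{L^4_\h}\lesssim
\|\vv u_k(t,\cdot,z)\|_{L^2_\h}^{\f12}
\Bigl(\|\pa_r\vv u_k(t,\cdot,z)\|_{L^2_\h}^{\f14}
+\bigl\|\tf{\vv u_k(t,\cdot,z)}r\bigr\|_{L^2_\h}^{\f14}\Bigr)
\bigl\|\tf{\vv u_k(t,\cdot,z)}r\bigr\|_{L^2_\h}^{\f14}.
\end{equation*}
First I would prove \eqref{lem3.3eq1} and \eqref{lem3.3eq2}: take the $L^2_\v$ norm in $z$, use $\||g|^{1/2}\|_{L^2_\v}=\|g\|_{L^1_\v}^{1/2}$ type manipulations and H\"older in $z$ to distribute the three factors, then take the $L^4_t$ norm. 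By H\"older in $t$ one splits the time integrals so that the $L^\infty_t(L^2)$-part of the energy controls the undifferentiated factor while the two $L^2_t(L^2)$-parts (the horizontal gradient and the $r^{-1}$-weighted term) control the remaining factors; matching the exponents $\f12,\f14,\f14$ against the definition of $E_j$ produces exactly the powers $N^{\eta-\f14}$ and $k^{-\f54}N^{\eta(2-k)-\f14}$, with the $\p_z$-versions replacing $E_0$ by $E_1$ since differentiating in $z$ commutes with the whole scheme.

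Next I would establish the $L^\infty_\v$ estimates in \eqref{lem3.3eq3}. Here the key tool is the one-dimensional inequality $\|g\|_{L^\infty_\v}^2\lesssim\|g\|_{L^2_\v}\|\p_z g\|_{L^2_\v}$, which converts an $L^\infty_\v$ norm into the geometric mean of an undifferentiated and a once-differentiated $L^2_\v$ norm. Applying this together with the already-proven $L^4_\h(L^2_\v)$ bounds for both $\vv u_k$ and $\p_z\vv u_k$ immediately yields \eqref{S3eq1b}: one writes
$\|\vv u_k\|_{L^4_t(L^4_\h(L^\infty_\v))}\lesssim
\|\vv u_k\|_{L^4_t(L^4_\h(L^2_\v))}^{1/2}
\|\p_z\vv u_k\|_{L^4_t(L^4_\h(L^2_\v))}^{1/2}$
after a Cauchy--Schwarz in $t$, and the product $E_0^{1/2}E_1^{1/2}$ of the square roots becomes $E_0^{1/4}E_1^{1/4}$ with the $k$- and $N$-powers multiplying correctly. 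For \eqref{S3eq1a} and \eqref{S3eq1c}, which involve the gradient and the $r^{-1}$-weighted terms in $L^2_t(L^2_\h(L^\infty_\v))$ rather than $L^4_t(L^4_\h(L^\infty_\v))$, I would again insert $\|g\|_{L^\infty_\v}^2\lesssim\|g\|_{L^2_\v}\|\p_z g\|_{L^2_\v}$ and then use Cauchy--Schwarz in $t$ (and, for the $L^2_\h$ factor, the plain $L^2_\h$ norm rather than $L^4_\h$), so that one factor is controlled by the $L^2_t(L^2)$ dissipation term in $E_0$ and the other by the same term in $E_1$.

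The main obstacle I anticipate is bookkeeping the exponents so that the $k$- and $N$-powers in \eqref{lem3.3eq2}--\eqref{lem3.3eq3} come out exactly as stated, rather than the analysis itself. The energy $E_j$ weights the $k$-th mode by $k^2N^{2\eta(k-2)}$, and inside the parentheses the $r^{-1}$-term carries an extra factor $\tf{k^2N^2}2$; when one extracts $\bigl\|\tf{\vv u_k}r\bigr\|_{L^2_t(L^2)}$ raised to the power $\f14$ (or $\f12$ in the gradient-weighted estimates) one must carefully track how these $k^2N^2$ weights and the $\f12,\f14,\f14$ H\"older split combine. In particular the transition from the exponent pattern of \eqref{lem3.3eq1}--\eqref{lem3.3eq2} (where $E^{1/2}$ appears) to \eqref{lem3.3eq3} (where $E_0^{1/4}E_1^{1/4}$ appears) hinges on correctly pairing one square root from the $L^\infty_\v$ step with the appropriate half of the $t$-integral, and the gain of one factor of $kN$ in \eqref{S3eq1c} compared with \eqref{S3eq1b} comes precisely from using the $\tf{k^2N^2}2\bigl\|\tf{\vv u_k}r\bigr\|_{L^2_t(L^2)}^2$ term directly. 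These are routine but delicate; once the exponent accounting is set up, each estimate reduces to a direct substitution into the definition \eqref{def:E}.
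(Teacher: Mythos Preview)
Your approach is essentially correct for $\vv u_k$ with $k\geq 1$ and for the $L^\infty_\v$ interpolation step, and it matches the paper's proof there. However, there is a genuine gap in your treatment of the zero mode $\vv u_0$.

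You propose to apply Lemma~\ref{lem3.2} uniformly to all modes, which for $\vv u_0$ would produce a factor $\bigl\|\tfrac{\vv u_0}{r}\bigr\|_{L^2_\h}^{1/4}$. But look at the definition \eqref{def:E}: for the zero mode, only $\bigl\|\tfrac{(\ur_0,\ut_0)}{r}\bigr\|_{L^2_t(L^2)}$ appears in $E_j$, not $\bigl\|\tfrac{\uz_0}{r}\bigr\|_{L^2_t(L^2)}$. The component $\uz_0$ is generically nonzero on the axis $r=0$, so $\uz_0/r$ is not in $L^2$ and cannot be controlled by $E_j$. Thus your scheme fails for the $\uz_0$ component of \eqref{lem3.3eq1}.

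The paper handles $\vv u_0$ differently: it invokes the \emph{isotropic} Sobolev inequality \eqref{3.1} rather than the anisotropic one. Since $\vv u_0$ is axially symmetric, one has $\|\nablah\vv u_0\|_{L^2_\h}^2=\|\pa_r\vv u_0\|_{L^2_\h}^2+\bigl\|\tfrac{(\ur_0,\ut_0)}{r}\bigr\|_{L^2_\h}^2$, with no $\uz_0/r$ term; this gives
\[
\|\vv u_0\|_{L^4_t(L^4_\h(L^2_\v))}\lesssim
\|\vv u_0\|_{L^\infty_t(L^2)}^{1/2}
\Bigl(\|\pa_r\vv u_0\|_{L^2_t(L^2)}^{1/2}
+\bigl\|\tfrac{(\ur_0,\ut_0)}{r}\bigr\|_{L^2_t(L^2)}^{1/2}\Bigr),
\]
which is bounded by $N^{\eta-\f14}E_0^{1/2}$ as claimed. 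The anisotropic inequality \eqref{3.6} (equivalently your Lemma~\ref{lem3.2}) is reserved for $k\geq 1$, where the full $\bigl\|\tfrac{\vv u_k}{r}\bigr\|$ is available in $E_j$ and the extra $(kN)^{-1/4}$ gain is essential. So the fix is minor---use \eqref{3.1} for $k=0$---but without it the argument for \eqref{lem3.3eq1} does not close.
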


\begin{proof} Observing that $\vv u_0=u_0^r\vv e_r+u_0^\theta\vv e_\theta+u_0^z\vv e_z,$
we deduce from  the classical Sobolev inequality \eqref{3.1} that
\beq\label{S3eq45}
\begin{split}
\|\vv u_0\|_{L^4_t(L^4_{\rm h}(L^2_{\rm v}))}\leq &\|\vv u_0\|_{L^4_t(L^2_{\rm v}(L^4_{\rm h}))}\\
\lesssim &
\|\vv u_0\|_{L^\infty_t(L^2)}^\f12
\Bigl(\|\pa_r\vv u_0\|_{L^2_t(L^2)}^{\f12}
+\bigl\|\f{(\ur_0,\ut_0)}r
\bigr\|_{L^2_t(L^2)}^{\f12}\Bigr).
\end{split}\eeq
While it follows from the anisotropic Sobolev inequality \eqref{3.6} that
\begin{equation}\label{lem3.3eq4}
\|\vv u_k\|_{L^4_t(L^4_{\rm h}(L^2_{\rm v}))}
\lesssim (kN)^{-\f14}\|\vv u_k\|_{L^\infty_t(L^2)}^\f12
\bigl\|\f{kN}r \vv u_k\bigr\|_{L^2_t(L^2)}^\f14
\Bigl(\|\p_r \vv u_k\|_{L^2_t(L^2)}^\f14
+\bigl\|\f{kN}r \vv u_k\bigr\|_{L^2_t(L^2)}^\f14\Bigr),
\end{equation} from which,  \eqref{def:E} and \eqref{S3eq45}, we derive
 the first inequalities of \eqref{lem3.3eq1} and \eqref{lem3.3eq2}. And the second inequalities of \eqref{lem3.3eq1} and \eqref{lem3.3eq2} follow along the same line.

On the other hand, by using the interpolation inequality
\beq\label{S3eq2} \|f\|_{L^\infty_\v}\lesssim
\|f\|_{L^2_\v}^{\f12}\|\pa_zf\|_{L^2_\v}^{\f12},\eeq
 \eqref{lem3.3eq1} and \eqref{lem3.3eq2}, we  deduce \eqref{lem3.3eq3}.
This completes the proof of Lemma \ref{lembddE}.
\end{proof}

\begin{rmk}
If instead of using the anisotropic Sobolev inequality \eqref{3.6}  in the derivation of \eqref{lem3.3eq4},
 we use the isotropic inequality,  we find
$$\|\p_z^j\vv u_k\|_{L^4_t(L^4_{\rm h}(L^2_{\rm v}))} \lesssim k^{-1} N^{\eta(2-k)} E_j^\f12(t)\quad\mbox{for}\ j=0,1.$$
 It is easy to observe that the right-hand side of \eqref{lem3.3eq2} has an additional decay in $k,$ $(kN)^{-\f14}$,
  which turns out to be  crucial in the proof of Theorem \ref{thm1}. This reflects the subtlety of the anisotropic inequalities.
\end{rmk}

\section{Proof of Proposition \ref{main prop}}\label{sec4}
The goal of this section is to present the proof of Proposition \ref{main prop}. The main ingredients will be
  the following Lemmas \ref{lem4.1}-\ref{lem4.4}, which are concerned with the $L^2$-estimates of $\vv u_0,\,\pa_z\vv u_0,\,\vv u_k$ and $\pa_z\vv u_k$ respectively.

\begin{lem}\label{lem4.1}
{\sl Under the assumptions of Proposition \ref{main prop}, for any $t\leq T$, we have
\begin{equation}\begin{aligned}\label{S4eq1}
& \|\vv u_0\|_{L^\infty_t(L^2)}^2
+2\| \wt \nabla \vv u_0\|_{L^2_t(L^2)}^2
+2\bigl\|\f{(\ur_0,\ut_0)}r\bigr\|_{L^2_t(L^2)}^2
\lesssim N^{3(\eta-\f14)} E_0^\f54(t)E_1^\f14(t).
\end{aligned}\end{equation}
}
\end{lem}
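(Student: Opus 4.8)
The goal is an energy estimate for the zero mode $\vv u_0$, so the plan is to perform a standard $L^2$ energy estimate on the system \eqref{eqtu0}. First I would take the $L^2$ inner product of the three momentum equations in \eqref{eqtu0} with $\ur_0$, $\ut_0$, $\uz_0$ respectively and sum. The material-derivative part $D_{0,t}\vv u_0$ contributes $\frac12\frac{d}{dt}\|\vv u_0\|_{L^2}^2$, since the transport term integrates to zero by the divergence-free condition $\pa_r\ur_0+\ur_0/r+\pa_z\uz_0=0$ together with the boundary condition at $r=1$. The dissipative operators, after integration by parts, produce $\|\wt\nabla\vv u_0\|_{L^2}^2$ plus the Hardy-type terms $\|r^{-1}\ur_0\|_{L^2}^2$ and $\|r^{-1}\ut_0\|_{L^2}^2$ coming from the $-1/r^2$ factors; the pressure terms drop out against the divergence-free condition. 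Thus the left-hand side of \eqref{S4eq1} is produced exactly, and everything reduces to bounding the right-hand side forcing.

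The heart of the matter is then to estimate the quadratic interaction terms on the right of \eqref{eqtu0}. These split into the self-interaction terms like $(\ut_0)^2/r$ and $\ut_0\ur_0/r$, and the infinite sums $\sum_{k\ge1}\bigl(\wt u_k\cdot\wt\nabla\ur_k+\cdots+\tfrac{kN}r(\ut_k\vr_k-\vt_k\ur_k)-\tfrac{(\ut_k)^2+(\vt_k)^2}r\bigr)$ recording the feedback of the high modes onto the zero mode. For each such term I would pair it (in the $L^2$ inner product) against the appropriate component of $\vv u_0$, then distribute the $L^2(\Omega)=L^2_t(L^2_\h(L^2_\v))$ norm anisotropically: put the $L^2_\v$ norm on the factor carrying an $r^{-1}$ weight or a $kN$ factor, and the $L^\infty_\v$ norm elsewhere, using the one-dimensional interpolation \eqref{S3eq2}. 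The point is to feed every factor into the building-block estimates of Lemma \ref{lembddE}: the $L^4_t(L^4_\h(L^2_\v))$ bounds \eqref{lem3.3eq1}--\eqref{lem3.3eq2} and the $L^\infty_\v$-type bounds \eqref{lem3.3eq3}. A representative term such as $\int \tfrac{kN}r\ut_k\vr_k\,\ur_0$ would be controlled by placing $r^{-1}\ur_0$ in $L^2_t(L^2_\h(L^\infty_\v))$ (bounded via \eqref{S3eq1a}), $\vr_k$ in $L^4_t(L^4_\h(L^2_\v))$ (via \eqref{lem3.3eq2}), and $kN\ut_k$ in a matching $L^4$ norm; each factor contributes its power of $E_0^{1/4}E_1^{1/4}$ and its decay $k^{-5/4}N^{\eta(2-k)}$ in $k$ and $N$.

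The main obstacle, and where the accounting must be done carefully, is twofold. The first is the summation over $k$: after applying Lemma \ref{lembddE} each term in the series carries a factor $k^{-5/4}N^{\eta(2-k)}$ (or the square of such, from the two high-mode factors), and I must check that the resulting series $\sum_{k\ge1}k^{-5/2}N^{2\eta(2-k)}$ converges and that its sum is absorbed into the prefactor $N^{3(\eta-1/4)}$ in \eqref{S4eq1}; the geometric decay in $N$ from the $N^{\eta(2-k)}$ factors is what renders the sum harmless for $k\ge2$ while the $k=1$ term sets the size. The second is to verify that the powers of $N$ bookkeep correctly: each $\vv u_0$ or $\vv u_k$ factor drawn from the energy functional \eqref{def:E} carries a compensating power $N^{\eta-1/4}$, and after combining the $L^4$ and $L^\infty_\v$ factors one must confirm the total exponent is exactly $3(\eta-1/4)$ and the total power of $(E_0,E_1)$ is $E_0^{5/4}E_1^{1/4}$. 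The convective self-interaction terms $(\ut_0)^2/r$ and $\ut_0\ur_0/r$ require the pointwise inequality \eqref{pointwisecyl} together with \eqref{S3eq1a} to handle the $r^{-1}$ weight, and these are the terms that pin down the $E_0^{5/4}$ rather than a more symmetric split. Assembling all contributions and using $E_j(t)\ge 0$ to absorb, I expect \eqref{S4eq1} to follow directly.
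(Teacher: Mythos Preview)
Your overall scheme is right, but there is a genuine gap in how you treat the high-mode interaction terms. In the sums on the right of \eqref{eqtu0}, the convective pieces come packaged as $\bigl(\wt u_k\cdot\wt\nabla-\tfrac{kN}r\vt_k\bigr)u_k$ and $\bigl(\wt v_k\cdot\wt\nabla+\tfrac{kN}r\ut_k\bigr)v_k$. The paper does not estimate these directly; it uses the divergence relations in \eqref{eqtuk}--\eqref{eqtvk} to integrate by parts (identities \eqref{S4eq2a}--\eqref{S4eq2b}), obtaining $-\bigl(u_k\,\big|\,\wt u_k\cdot\wt\nabla\vv u_0\bigr)-\bigl(v_k\,\big|\,\wt v_k\cdot\wt\nabla\vv u_0\bigr)$, so that the derivative now sits on $\vv u_0$ and the explicit $kN$ factor disappears. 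This step is essential: after it, both high-mode factors are undifferentiated and can each be placed in $L^4_t(L^4_\h)$, where the anisotropic Sobolev inequality \eqref{3.6} contributes an extra $(kN)^{-1/4}$ per factor. Your ``representative term'' analysis keeps $kN\ut_k$ (or $\wt\nabla u_k$) on a high-mode factor; but Lemma~\ref{lembddE} gives no $L^4$ bound for $kN\ut_k$ with the decay $k^{-5/4}N^{\eta(2-k)}$ you claim---you would get $kN\cdot k^{-5/4}N^{\eta(2-k)-1/4}=k^{-1/4}N^{\eta(2-k)+3/4}$ instead. If you chase the exponents through, the $k=1$ term of your direct estimate produces $N^{3\eta-\frac12}$ rather than $N^{3(\eta-\frac14)}$, i.e.\ a loss of $N^{1/4}$, and the lemma fails.

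A secondary point: the self-interaction terms $\tfrac{(\ut_0)^2}r$ and $-\tfrac{\ur_0\ut_0}r$ need no estimate at all, because after pairing with $\ur_0$ and $\ut_0$ respectively they cancel exactly. (In particular you do not need \eqref{pointwisecyl}, which is good, since this lemma is also meant to apply in the whole-space setting.) So the whole proof reduces, after the integration by parts above, to the single bound \eqref{S4eq3}, and Lemma~\ref{lembddE} closes it in one line.
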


\begin{proof}
We first get, by using integration by parts, the homogeneous Dirichlet boundary condition for $u_0$ and $\dive u_0=0$, that
$\bigl( u_0\cdot \wt\nabla \vv u_0
\big|\vv u_0\bigr)=0$ and
$$\int_{\wt\Omega} \bigl( \ur_0 \p_r P_0+\uz_0 \p_z P_0\bigr)\,rdrdz=-\int_{\wt\Omega} \bigl( \pa_r\ur_{0}+\frac{\ur_{0}}r
+\pa_z\uz_{0}\bigr)P_0\,rdrdz=0,
$$
where ${\wt\Omega}\eqdef \{(r,z)\in[0,1[\times\R\}$.
Then by taking $L^2$ inner product of \eqref{eqtu0} for $\nu=1$ with $\vv u_0$, we find
\begin{equation}\begin{aligned}\label{S4eq2}
\f12\f{d}{dt}&\|\vv u_0\|_{L^2}^2
+\| \wt \nabla \vv u_0\|_{L^2}^2
+\bigl\|\f{(\ur_0,\ut_0)}r \bigr\|_{L^2}^2\\
&=-\f12\sum_{k=1}^\infty\Bigl( \bigl(\wt u_k\cdot \wt\nabla-\f{kN}r \vt_k\bigr) u_k  +\bigl(\wt v_k\cdot \wt\nabla+\f{kN}r \ut_k\bigr) v_k\Big|\vv u_0\Bigr)\\
&\qquad+\f12\sum_{k=1}^\infty\Bigl(  (\ut_k)^2+{(\vt_{k})^2}  \Big|\f{\ur_0}r\Bigr)
-\f12\sum_{k=1}^\infty\Bigl( \ur_k\ut_k+\vr_k\vt_k \Big|\f{\ut_0}r\Bigr),
\end{aligned}\end{equation}
where $u_k\eqdef(\ur_k,\ut_k,\uz_k),
~v_k\eqdef(\vr_k,\vt_k,\vz_k)$.

It is easy to observe from the fourth equations of
 \eqref{eqtuk} and \eqref{eqtvk} that
\begin{equation}\label{S4eq2a}
\int_{\wt\Omega} \bigl(\wt u_k\cdot \wt\nabla-\f{kN}r \vt_k\bigr)f \cdot g\,rdrdz = -\int_{\wt\Omega} f \cdot (\wt u_k\cdot \wt\nabla g)\,rdrdz
\end{equation}
and
\begin{equation}\label{S4eq2b}
\int_{\wt\Omega} \bigl(\wt v_k\cdot \wt\nabla+\f{kN}r \ut_k\bigr)f \cdot g\, rdrdz= -\int_{\wt\Omega} f \cdot (\wt v_k\cdot \wt\nabla g)\, rdrdz.
\end{equation}
By applying \eqref{S4eq2a} and \eqref{S4eq2b} to the second line of \eqref{S4eq2}, we obtain
\begin{align*}
\Bigl( \bigl(\wt u_k\cdot \wt\nabla-\f{kN}r \vt_k\bigr) u_k  +\bigl(\wt v_k\cdot \wt\nabla
+\f{kN}r \ut_k\bigr) v_k \Big|\vv u_0\Bigr)
=-\bigl(u_k\big|\wt u_k\cdot \wt\nabla \vv u_0\bigr)
-\bigl( v_k\big|\wt v_k\cdot \wt\nabla \vv u_0\bigr).
\end{align*}
Then we get,
by substituting the above equality into \eqref{S4eq2} and using H\"older's inequality, that
\begin{equation}\begin{aligned}\label{S4eq3}
\|\vv u_0\|_{L^\infty_t(L^2)}^2
&+2\| \wt \nabla \vv u_0\|_{L^2_t(L^2)}^2
+2\|\f{(\ur_0,\ut_0)}r \|_{L^2_t(L^2)}^2 \\
&\lesssim\sum_{k=1}^\infty
\int_0^t\int_{\wt\Omega} |\vv u_k|^2 \bigl(|\wt\nabla \vv u_0|+\bigl|\f{(\ur_0,\ut_0)}r\bigr| \bigr)\,rdrdzdt'\\
&\lesssim\sum_{k=1}^\infty
\|\vv u_k\|_{L^4_t(L^4_{\rm h}(L^\oo_{\rm v}))}\|\vv u_k\|_{L^4_t(L^4_{\rm h}(L^2_{\rm v}))} \bigl(\|\wt\nabla \vv u_0\|_{L^2_t(L^2)}
+\bigl\|\f{(\ur_0,\ut_0)}r\bigr\|_{L^2_t(L^2)}\bigr),
\end{aligned}\end{equation}
 from which and Lemma \ref{lembddE}, we infer
\begin{align*}
\|\vv u_0\|_{L^\infty_t(L^2)}^2
&+2\| \wt \nabla \vv u_0\|_{L^2_t(L^2)}^2
+2\|\f{(\ur_0,\ut_0)}r \|_{L^2_t(L^2)}^2 \\
&\lesssim N^{3(\eta-\f14)} E_0^\f54(t)E_1^\f14(t)
\sum_{k=1}^\infty k^{-\f52}\lesssim N^{3(\eta-\f14)}
E_0^\f54(t)E_1^\f14(t).
\end{align*}
This completes the proof of Lemma \ref{lem4.1}.
\end{proof}

\begin{lem}\label{S4prop2}
{\sl Under the assumptions of Proposition \ref{main prop}, for any $t\leq T$, we have
\begin{equation}\begin{aligned}\label{S4eq11}
& \|\p_z\vv u_0\|_{L^\infty_t(L^2)}^2
+2\|\wt\nabla\pa_z\vv u_0\|_{L^2_t(L^2)}^2
+2\bigl\|\f{(\pa_z\ur_0,\pa_z\ut_0)}r \bigr\|_{L^2_t(L^2)}^2
\lesssim N^{3(\eta-\f14)} E_0^\f14(t)E_1^\f54(t).
\end{aligned}\end{equation}
}\end{lem}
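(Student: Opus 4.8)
The plan is to mirror the proof of Lemma \ref{lem4.1}, replacing the test function $\vv u_0$ by $\pa_z\vv u_0$. First I would apply $\pa_z$ to the system \eqref{eqtu0} with $\nu=1$ and take the $L^2$ inner product with $\pa_z\vv u_0$. Since the viscous and geometric operators have coefficients independent of $z$, they commute with $\pa_z$ and yield the dissipation $\|\wt\nabla\pa_z\vv u_0\|_{L^2_t(L^2)}^2+\bigl\|\f{(\pa_z\ur_0,\pa_z\ut_0)}r\bigr\|_{L^2_t(L^2)}^2$ on the left. The pressure contribution vanishes: integrating by parts against the weighted measure $r\,drdz$ turns it into $-\bigl(\pa_z(\pa_r\ur_0+\f{\ur_0}r+\pa_z\uz_0)\big|\pa_z P_0\bigr)$, which is zero because it is $\pa_z$ of the divergence-free relation (the fourth equation of \eqref{eqtu0}). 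Likewise the transport part of $D_{0,t}(\pa_z\vv u_0)$ tested against $\pa_z\vv u_0$ vanishes by $\dive u_0=0$ and the homogeneous boundary condition, exactly as in Lemma \ref{lem4.1}.

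The genuinely new contributions are the commutator $[\pa_z,D_{0,t}]\vv u_0=(\pa_z\ur_0)\pa_r\vv u_0+(\pa_z\uz_0)\pa_z\vv u_0$, the $\pa_z$-derivatives of the self-interaction terms $\f{(\ut_0)^2}r$ and $-\f{\ut_0\ur_0}r$, and the $\pa_z$-derivatives of the quadratic sum over $k$. I would bound all of these by $N^{3(\eta-\f14)}E_0^\f14(t)E_1^\f54(t)$ using anisotropic Hölder inequalities and Lemma \ref{lembddE}. For the commutator piece $(\pa_z\ur_0)\pa_r\vv u_0$, I would place $\pa_r\vv u_0$ (part of $\wt\nabla\vv u_0$) in $L^2_t(L^2_\h(L^\infty_\v))$ via \eqref{S3eq1a}, and the two factors $\pa_z\ur_0$ and $\pa_z\vv u_0$ in $L^4_t(L^4_\h(L^2_\v))$ via \eqref{lem3.3eq1}, which produces exactly $N^{\eta-\f14}E_0^\f14E_1^\f14\cdot\bigl(N^{\eta-\f14}E_1^\f12\bigr)^2$. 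The self-interaction terms, which after differentiation no longer cancel but reduce to expressions of the type $\int\f1r(\ur_0,\ut_0)\,\pa_z\ut_0\,\pa_z(\ur_0,\ut_0)$, are handled identically by putting $\f{(\ur_0,\ut_0)}r$ in $L^2_t(L^2_\h(L^\infty_\v))$ through \eqref{S3eq1a}.

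The hard part will be the second commutator term $(\pa_z\uz_0)\pa_z\vv u_0$, whose three factors each already carry one $z$-derivative, so no factor can be placed in $L^\infty_\v$ without invoking $\pa_z^2\vv u_0$, which is controlled by neither $E_0$ nor $E_1$. The key device is to use the divergence-free relation to replace $\pa_z\uz_0=-\bigl(\pa_r+\f1r\bigr)\ur_0$, trading the vertical derivative for horizontal derivatives of $\ur_0$; the resulting factors $\pa_r\ur_0$ and $\f{\ur_0}r$ then do enjoy the $L^2_t(L^2_\h(L^\infty_\v))$ bound \eqref{S3eq1a}, while the surviving $(\pa_z\vv u_0)^2$ goes into $L^4_t(L^4_\h(L^2_\v))$ via \eqref{lem3.3eq1}, again giving $N^{3(\eta-\f14)}E_0^\f14E_1^\f54$.

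Finally, for the nonlinear sum over $k$ I would, as in Lemma \ref{lem4.1}, use the transport identities \eqref{S4eq2a}–\eqref{S4eq2b} (now applied to the $\pa_z$-differentiated equations) to move $\wt\nabla$ onto $\pa_z\vv u_0$, distribute $\pa_z$ by Leibniz over the quadratic mode-$k$ expressions, and always assign the $L^\infty_\v$ norm to the \emph{undifferentiated} $\vv u_k$ factor (via \eqref{S3eq1b}), while the $\pa_z\vv u_k$ factor goes in $L^4_t(L^4_\h(L^2_\v))$ and $\wt\nabla\pa_z\vv u_0$ in $L^2_t(L^2)$. Each term then carries a factor $k^{-\f52}N^{2\eta(2-k)}$, so the series converges by $\sum_k k^{-\f52}<\infty$ and is dominated by its $k=1$ term, producing once more $N^{3(\eta-\f14)}E_0^\f14(t)E_1^\f54(t)$. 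Collecting all contributions and integrating the resulting differential inequality in time yields \eqref{S4eq11}.
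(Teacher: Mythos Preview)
Your proposal is correct and follows essentially the same energy scheme as the paper; the only notable difference is in how you handle the transport commutator $I_1$. The paper, instead of splitting $\pa_z u_0\cdot\wt\nabla\vv u_0$ into the $\pa_r$ and $\pa_z$ pieces and invoking the divergence-free relation $\pa_z\uz_0=-(\pa_r+\f1r)\ur_0$ for the second, performs one more integration by parts to rewrite $(u_0\cdot\wt\nabla\vv u_0\,|\,\pa_z^2\vv u_0)$ as $\int(\pa_z u_0\cdot\wt\nabla\pa_z\vv u_0)\cdot\vv u_0$, so that the factor \emph{without} a $z$-derivative is $\vv u_0$ and can be placed directly in $L^4_t(L^4_\h(L^\infty_\v))$ via \eqref{S3eq1a}, while $\wt\nabla\pa_z\vv u_0\in L^2_t(L^2)$ sits inside $E_1$. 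Your divergence-free substitution achieves the same bound but requires treating the two commutator pieces separately; the paper's rearrangement is slightly cleaner since it handles the whole commutator at once. The treatment of $I_2$ and $I_3$ is the same in both arguments.
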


\begin{proof} Similar to the derivation of \eqref{S4eq2},  by taking $L^2$ inner product of the system \eqref{eqtu0} for $\nu=1$ with $-\p_z^2 \vv u_0$
and using integration by parts, we obtain
\begin{equation}\begin{aligned}\label{S4eq12}
\f12\f{d}{dt}\|\p_z&\vv u_0\|_{L^2}^2
+\| \wt \nabla \p_z\vv u_0\|_{L^2}^2
+\bigl\|\f{(\p_z\ur_0,\p_z\ut_0)}r\bigr\|_{L^2}^2\\
=&\bigl( u_0\cdot \wt\nabla \vv u_0
\big|\p_z^2\vv u_0\bigr)
-\bigl(\f{(\ut_0)^2}r \big|\p_z^2\ur_0\bigr)
+\bigl(\f{\ut_0\ur_0}r\big|\p_z^2\ut_0\bigr)\\
&+\f12\sum_{k=1}^\oo\Bigl( \bigl(\wt u_k\cdot \wt\nabla-\f{kN}r \vt_k\bigr) u_k +\bigl(\wt v_k\cdot \wt\nabla+\f{kN}r \ut_k\bigr) v_k\Big|\p_z^2\vv u_0\Bigr)\\
&-\f12\sum_{k=1}^\infty\Bigl(  (\ut_k)^2+{(\vt_{k})^2}  \Big|\f{\p_z^2\ur_0}r\Bigr)
+\f12\sum_{k=1}^\infty\Bigl( \ur_k\ut_k+\vr_k\vt_k \Big|\f{\p_z^2\ut_0}r\Bigr).
\end{aligned}\end{equation}
By using integration by parts and  \eqref{S4eq2a}, \eqref{S4eq2b}, we find
\begin{align*}
&\Bigl( \bigl(\wt u_k\cdot \wt\nabla-\f{kN}r \vt_k\bigr) u_k +\bigl(\wt v_k\cdot \wt\nabla+\f{kN}r \ut_k\bigr) v_k\Big|\p_z^2\vv u_0\Bigr)\\
&=-\bigl( u_k \big| \wt u_k\cdot \wt\nabla\p_z^2\vv u_0\bigr)
-\bigl(v_k \big| \wt v_k\cdot \wt\nabla\p_z^2\vv u_0\bigr)\\
&=\bigl(\p_z u_k \big| \wt u_k\cdot \wt\nabla\p_z\vv u_0\bigr)_{L^2}+\bigl(u_k | \p_z\wt u_k\cdot \wt\nabla\p_z\vv u_0\bigr)
+\bigl(\p_z v_k\big| \wt v_k\cdot \wt\nabla\p_z\vv u_0\bigr)
+\bigl(v_k \big| \p_z\wt v_k\cdot \wt\nabla\p_z\vv u_0\bigr),
\end{align*}
and
\begin{align*}
-\Bigl((\ut_k)^2+(\vt_{k})^2  \Big|\f{\p_z^2\ur_0}r\Bigr)
&+\Bigl(\ur_k\ut_k+\vr_k\vt_k \Big|\f{\p_z^2\ut_0}r\Bigr)\\
&=2\Bigl(\ut_k\pa_z\ut_k+\vt_k\pa_z\vt_k  \Big|\f{\p_z\ur_0}r\Bigr)
-\Bigl(\pa_z\bigl(\ur_k\ut_k+\vr_k\vt_k\bigr) \Big|\f{\p_z\ut_0}r\Bigr).
\end{align*}
By substituting the above inequalities  into \eqref{S4eq12}
and then integrating the resulting equation over $[0,t],$ we infer
\begin{equation}\label{S4eq13}
\f12\|\p_z \vv u_0\|_{L^\infty_t(L^2)}^2
+\| \wt \nabla \p_z\vv u_0\|_{L^2_t(L^2)}^2
+\bigl\|\f{(\p_z\ur_0,\p_z\ut_0)}r
\bigr\|_{L^2_t(L^2)}^2\leq\sum_{j=1}^3I_j(t),
\end{equation}
where
\beq\begin{split}\label{defI}
I_1(t)&\eqdef\int_0^t \Bigl|\bigl( u_0\cdot \wt\nabla \vv u_0 \big|\p_z^2\vv u_0\bigr)\Bigr|\,dt'
=\int_0^t \bigl|\int_{\wt\Omega}(\p_z u_0\cdot \wt\nabla\p_z\vv u_0) \cdot \vv u_0 \,rdrdz\bigr|\,dt'\\
&\leq\|\vv u_0\|_{L^4_t(L^4_{\rm h}(L^\oo_{\rm v}))}\|\p_z\vv u_0\|_{L^4_t(L^4_{\rm h}(L^2_{\rm v}))} \|\wt\nabla \p_z\vv u_0\|_{L^2_t(L^2)},\\
I_2(t)&\eqdef\int_0^t \Bigl|\bigl(\f{(\ut_0)^2}r \big|\p_z^2\ur_0\bigr)-\bigl(\f{\ut_0\ur_0}r \big|\p_z^2\ut_0\bigr)\Bigr|\,dt'\\
&\leq\|\vv u_0\|_{L^4_t(L^4_{\rm h}(L^\oo_{\rm v}))}\|\p_z\vv u_0\|_{L^4_t(L^4_{\rm h}(L^2_{\rm v}))} \bigl\| \f{(\p_z \ur_0,\p_z \ut_0)}r\bigr\|_{L^2_t(L^2)},\\
I_3(t)&\eqdef \sum_{k=1}^\infty\int_0^t\int_{\wt\Omega}|\vv u_k||\p_z \vv u_k| \Bigl(|\wt\nabla \p_z\vv u_0|+\bigl|\f{(\p_z\ur_0,\p_z\ut_0)}r\bigr| \Bigr)\,rdrdzdt'\\
&\leq\sum_{k=1}^\infty
\|\vv u_k\|_{L^4_t(L^4_\h(L^\infty_\v))}
\|\p_z\vv u_k\|_{L^4_t(L^4_\h(L^2_\v))}
\Bigl\|\Bigl(\wt\nabla \p_z\vv u_0,
\f{(\p_z\ur_0,\p_z\ut_0)}r\Bigr)\Bigr\|
_{L^2_t(L^2)},
\end{split}\eeq
from which and Lemma \ref{lembddE}, we deduce that
\begin{equation}\label{S4eq14}
I_1(t)\lesssim N^{3(\eta-\f14)} E_0^\f14(t) E_1^\f54(t),\quad
I_2(t)\lesssim N^{3(\eta-\f14)}E_0^\f14(t) E_1^\f54(t),
\end{equation}
and
\begin{equation}\label{S4eq16}
I_3(t)\lesssim N^{\eta-\f14}
\sum_{k=1}^\infty
k^{-\f52}N^{2\eta(2-k)-\f12}E_0^\f14(t)E_1^\f54(t)
\lesssim N^{3(\eta-\f14)}E_0^\f14(t) E_1^\f54(t).
\end{equation}

By inserting the inequalities \eqref{S4eq14} and \eqref{S4eq16} into \eqref{S4eq13}, we achieve  \eqref{S4eq11}. This completes the proof of Lemma \ref{S4prop2}.
\end{proof}

\begin{lem}\label{lem4.3}
{\sl Under the assumptions of Proposition \ref{main prop}, for any $t\leq T$ and $k\geq 1$, we have
\begin{equation}\begin{aligned}\label{S4eq21}
\|\vv u_k\|_{L^\infty_t(L^2)}^2
+2\|\wt \nabla \vv u_k\|_{L^2_t(L^2)}^2
&+\f{k^2N^2}2\bigl\|\f{\vv u_k}r \bigr\|_{L^2_t(L^2)}^2\\
&\leq \|\vv u_k(0)\|_{L^2}^2
+Ck^{-\f94}N^{2\eta(2-k)+2(\eta-\f14)}
E_0^\f54(t) E_1^\f14(t).
\end{aligned}\end{equation}
}
\end{lem}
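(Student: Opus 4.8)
The plan is to perform a weighted $L^2$ energy estimate on the coupled system \eqref{eqtuk}--\eqref{eqtvk} satisfied by $\vv u_k=(\ur_k,\vt_k,\uz_k,\vr_k,\ut_k,\vz_k)$. First I would take the $L^2$ inner product of the $\ur_k$-equation with $\ur_k$, the $\vt_k$-equation with $\vt_k$, the $\uz_k$-equation with $\uz_k$, and likewise for the $v$-components, then sum all six. The linear dissipative terms $-(\pa_r^2+\f{\pa_r}r-\f{1+k^2N^2}{r^2}+\p_z^2)$ produce, after integration by parts using the Dirichlet condition at $r=1$, exactly the good terms $\|\wt\nabla\vv u_k\|_{L^2}^2$ together with the crucial singular-weight term $k^2N^2\|\f{\vv u_k}r\|_{L^2}^2$; the transport term $D_{0,t}=\p_t+u_0\cdot\wt\nabla$ integrates to zero by $\dive u_0=0$ and the boundary condition. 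The pressure-coupling terms $\pa_r P_k$, $-\f{kN}rP_k$, $\p_z P_k$ against $(\ur_k,\vt_k,\uz_k)$ cancel against the divergence constraint $\pa_r\ur_k+\f{\ur_k}r+\p_z\uz_k+kN\f{\vt_k}r=0$, and similarly for the $Q_k$ terms; likewise the zeroth-order rotation terms $\pm\f{2kN}{r^2}$ coupling $\ur_k\leftrightarrow\vt_k$ are antisymmetric and cancel in the sum. This yields an energy identity whose left side is the desired quantity and whose right side collects the remaining nonlinear interactions.

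Next I would classify the right-hand side terms into two families. The first family comes from the linear-in-$\vv u_k$ coupling with the zero-mode $\vv u_0$, namely terms like $-\wt u_k\cdot\wt\nabla\ur_0$, $\pm\f{kN}r\ut_0\,(\cdot)$, and $\f1r\ut_0\,(\cdot)$ appearing on the right of \eqref{eqtuk}--\eqref{eqtvk}. The second family is the genuinely quadratic self-interaction encoded in $F^r_k,F^\th_k,F^z_k,G^r_k,G^\th_k,G^z_k$, which are convolution sums over $k_1+k_2=k$ and $|k_1-k_2|=k$ of products of $\vv u_{k_1}$ and $\vv u_{k_2}$. For the first family I would use H\"older in the anisotropic Lebesgue spaces $L^4_\h(L^\infty_\v)\times L^4_\h(L^2_\v)\times L^2_\h(L^2_\v)$, placing the $\vv u_0$ factor in the $L^\infty_\v$ slot, and control everything via Lemma \ref{lembddE}, absorbing the singular weights $\f{kN}r$ into the good term $kN\|\f{\vv u_k}r\|$. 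For the convolution sums I would bound each trilinear integral by $\sum_{k_1+k_2=k}$ (and $\sum_{|k_1-k_2|=k}$) of $\|\vv u_{k_1}\|_{L^4_t(L^4_\h(L^\infty_\v))}\|\vv u_{k_2}\|_{L^4_t(L^4_\h(L^2_\v))}\|\wt\nabla\vv u_k\|_{L^2_t(L^2)}$, again invoking \eqref{lem3.3eq2} and \eqref{lem3.3eq3}.

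The main obstacle, and the heart of the estimate, will be the bookkeeping of the exponential weights $N^{\eta(2-k)}$ through the frequency convolutions. Each factor $\vv u_{k_i}$ carries a weight $k_i^{-5/4}N^{\eta(2-k_i)}$ from Lemma \ref{lembddE}, so a product over $k_1+k_2=k$ produces $N^{\eta(4-k_1-k_2)}=N^{\eta(4-k)}$, which is $N^{2\eta}$ times the target weight $N^{\eta(2-k)}$; I must check that this surplus $N^{2\eta}$, combined with the extra decay factors $(kN)^{-1/4}$ coming from the anisotropic inequality \eqref{3.6}, assembles into precisely the claimed prefactor $N^{2\eta(2-k)+2(\eta-\f14)}$ and the polynomial rate $k^{-9/4}$. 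The summability in $k_1,k_2$ of the products $k_1^{-5/4}k_2^{-5/4}$ over the two convolution constraints, and the verification that the diagonal-versus-off-diagonal contributions both land below $Ck^{-9/4}$, is the delicate combinatorial step; everything else reduces to H\"older's inequality, \eqref{3.6}, and the definition \eqref{def:E}. I would finally integrate in time over $[0,t]$, using that the initial data contributes $\|\vv u_k(0)\|_{L^2}^2$ (nonzero only for $k=1$), to reach \eqref{S4eq21}.
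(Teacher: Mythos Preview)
Your overall architecture is right, and the combinatorial bookkeeping of the convolution weights is exactly what the paper does. But there is one genuine error in the linear part that would break the proof as written.

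You claim that the coupling terms $\pm\f{2kN}{r^2}$ between $\ur_k\leftrightarrow\vt_k$ (and $\vr_k\leftrightarrow\ut_k$) are antisymmetric and cancel. They are not: look at \eqref{eqtuk}. The $\ur_k$-equation has $+\f{2kN}{r^2}\vt_k$ and the $\vt_k$-equation has $+\f{2kN}{r^2}\ur_k$, both with the \emph{same} sign. Pairing with $(\ur_k,\vt_k)$ produces $+4kN\bigl(\f{\ur_k}r\big|\f{\vt_k}r\bigr)$, and similarly the $(\vr_k,\ut_k)$ pair gives $-4kN\bigl(\f{\vr_k}r\big|\f{\ut_k}r\bigr)$. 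These cross terms survive and are of size $2kN\|\f{\vv u_k}r\|_{L^2}^2$, which is not small compared to the good term $k^2N^2\|\f{\vv u_k}r\|_{L^2}^2$. The paper absorbs them by also keeping the extra dissipative piece $\|\f{(\ur_k,\vt_k,\vr_k,\ut_k)}r\|_{L^2}^2$ coming from the $-\f{1}{r^2}$ in the operator, and then invoking the elementary inequality $4kN\leq\f32k^2N^2+2$ valid for $N\geq2$. This absorption costs a constant factor in the singular-weight term, which is precisely why the statement carries $\f{k^2N^2}2$ rather than $k^2N^2$ on the left-hand side---a detail your cancellation argument cannot explain.

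A smaller point: for the quadratic forces $(F_k,G_k)$, you write the trilinear bound with $\|\wt\nabla\vv u_k\|_{L^2_t(L^2)}$ on the third factor, which presumes an integration by parts moving the $\wt\nabla$ off $\vv u_{k_2}$. That step is correct but not automatic: the transport pieces in $F_k,G_k$ carry coefficients $\f{k_2N}{r}$, and after the integration by parts (using the divergence constraints on $\wt u_{k_1},\wt v_{k_1}$) these become $\f{kN}{r}$, so the resulting bound must also include the term $kN\|\f{\vv u_k}r\|_{L^2_t(L^2)}$ alongside $\|\wt\nabla\vv u_k\|_{L^2_t(L^2)}$. The paper records this as inequality \eqref{4.16} and proves it in Appendix~\ref{appendixB}; without it the $\f{k_2N}{r}$ factors would spoil the summation in $k_2$.
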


\begin{proof} Due to $\vv u_k|_{r=1}=0,$ we get, by using integration by parts and the fourth equations of
\eqref{eqtuk} and \eqref{eqtvk}, that
\begin{align*}
\int_{\wt\Omega}\bigl(\ur_k \p_r P_k
&-\f{kN}r \vt_k P_k+\uz_k \p_z P_k\bigr)r\,dr\,dz\\
&=-\int_{\wt\Omega}\bigl( \pa_r\ur_{k}+\frac{\ur_k}r+\f{kN}r \vt_k+\pa_z\uz_{k}\bigr)P_k\,rdrdz=0,
\end{align*}
and
\begin{align*}
\int_{\wt\Omega}\bigl( \vr_k \p_r Q_k
& +\f{kN}r \ut_k Q_k+\vz_k \p_z Q_k\bigr)r\,dr\,dz\\
&=-\int_{\wt\Omega}\bigl( \pa_r\vr_{k}+\frac{\vr_k}r-\f{kN}r \ut_k+\pa_z\vz_{k}\bigr)Q_k\,rdrdz=0.
\end{align*}
As a result, for $\nu=1,$  by taking $L^2$ inner product of \eqref{eqtuk} with $u_k$ and \eqref{eqtvk} with $v_k$, we obtain
\begin{equation}\begin{aligned}\label{S4eq22}
\f12\f{d}{dt}&\|\vv u_k\|_{L^2}^2
+\|\wt \nabla \vv u_k\|_{L^2}^2
+k^2N^2\|\f{\vv u_k}r\|_{L^2}^2
+\|\f{(\ur_k,\vt_k,\vr_k,\ut_k)}r \|_{L^2}^2\\
=&4kN \bigl(\f{\vr_k}r \big|\f{\ut_k}r \bigr)
-4kN \bigl(\f{\ur_k}r \big|\f{\vt_k}r \bigr)
-\bigl(\wt u_k\cdot \wt\nabla \vv u_0 \big|u_k\bigr)
-\bigl(\wt v_k\cdot \wt\nabla \vv u_0
\big|v_k\bigr)
+2\bigl(\f{\ut_0\ut_k}r\big|\ur_k\bigr)\\
&-\bigl(\f{\ur_0\vt_k+\ut_0\vr_k}r\big|\vt_k\bigr)
+2\bigl(\f{\ut_0\vt_k}r\big|\vr_k\bigr)
-\bigl(\f{\ur_0\ut_k+\ut_0\ur_k}r\big|\ut_k\bigr) +\bigl( (F_k,G_k) \big|\vv u_k\bigr).
\end{aligned}\end{equation}

Observing that $2kN\leq \f34 k^2N^2+1$
for any $N\geq2$, we infer
$$4kN\Bigl| \bigl(\f{\ur_k}r |\f{\vt_k}r \bigr)
-\bigl(\f{\vr_k}r |\f{\ut_k}r \bigr)\Bigr|
\leq\bigl(\f{3k^2N^2}4+1\bigr)
\bigl\|\f{(\ur_k,\vt_k,\vr_k,\ut_k)}r\bigr\|_{L^2}^2.$$
By substituting the above inequality into \eqref{S4eq22} and integrating the resulting inequality over $[0,t],$ we achieve
\begin{equation}\label{S4eq23}
\|\vv u_k\|_{L^\infty_t(L^2)}^2
+2\| \wt \nabla \vv u_k\|_{L^2_t(L^2)}^2
+\f{k^2N^2}2\|\f{\vv u_k}r \|_{L^2_t(L^2)}^2
\leq \|\vv u_k(0)\|_{L^2}^2
+CII_1(t)+CII_2(t),
\end{equation}
where
\begin{equation}\begin{split}\label{defII}
&II_1(t)\eqdef\|\vv u_k\|_{L^4_t(L^4_\h(L^2_\v))}^2
\Bigl(\|\wt\nabla \vv u_0\|
_{L^2_t(L^2_\h(L^\infty_\v))}
+\bigl\|\f{(\ur_0,\ut_0)}r\bigr\|
_{L^2_t(L^2_\h(L^\infty_\v))}\Bigr),\\
&II_2(t)\eqdef\int_0^t \Bigl|\bigl( (F_k,G_k)
\big|\vv u_k\bigr)\Bigr|\,dt'.
\end{split}\end{equation}

It follows from
 Lemma \ref{lembddE} that
\begin{equation}\label{S4eq24}
II_1(t)\lesssim k^{-2}N^{2\eta(2-k)}(kN)^{-\f12}N^{\eta-\f14}
E_0^\f54(t)E_1^\f14(t).
\end{equation}
While the estimate of $II_2(t)$ relies on the following inequality:
\begin{equation}\label{4.16}
\bigl|\bigl( (F_k,G_k) \big|\vv u_k\bigr)\bigr|
\lesssim\sum_{\substack{|k_1-k_2|=k\\
\text{ or }k_1+k_2=k}}
\int_{\wt\Omega}|\vv u_{k_1}||\vv u_{k_2}| \bigl(|\wt \nabla \vv u_k|
+kN\bigl|\f{\vv u_k}r\bigr| \bigr)\,rdrdz,
\end{equation}
the proof of which  will be postponed in Appendix \ref{appendixB}.

By virtue of \eqref{4.16}, we get, by using  Lemma \ref{lembddE}, that
\begin{equation}\begin{split}\label{4.17}
II_2(t)&\lesssim\sum_{\substack{
|k_1-k_2|=k\\
\text{ or }k_1+k_2=k}}
\|\vv u_{k_1}\|_{L^4_t(L^4_{\rm h}(L^\oo_{\rm v}))}\|\vv u_{k_2}\|_{L^4_t(L^4_{\rm h}(L^2_{\rm v}))} \bigl(\|\wt\nabla \vv u_k\|_{L^2_t(L^2)}
+kN\bigl\|\f{\vv u_k}r\bigr\|_{L^2_t(L^2)}\bigr)\\
&\lesssim k^{-1}N^{\eta(2-k)} E_0^\f54(t)E_1^\f14(t)\sum_{\substack{
|k_1-k_2|=k\\
\text{ or }k_1+k_2=k}}
 k_1^{-\f54}N^{\eta(2-k_1)-\f14}
 k_2^{-\f54}N^{\eta(2-k_2)-\f14}.
\end{split}\end{equation}
Notice that when $k_1+k_2=k$, there holds
\begin{align*}
\sum_{k_1+k_2=k}k_1^{-\f54}N^{\eta(2-k_1)-\f14}
k_2^{-\f54}N^{\eta(2-k_2)-\f14}
&=N^{\eta(2-k)+2(\eta-\f14)}\sum_{k_1+k_2=k}
k_1^{-\f54}k_2^{-\f54}\\
&\lesssim k^{-\f54}N^{\eta(2-k)+2(\eta-\f14)}.
\end{align*}
Whereas when $|k_1-k_2|=k$, there must hold $\max\{k_1,k_2\}\geq k+1$, so that one has
\begin{align*}
\sum_{|k_1-k_2|=k}k_1^{-\f54}N^{\eta(2-k_1)-\f14}
k_2^{-\f54}N^{\eta(2-k_2)-\f14}
&\leq (k+1)^{-\f54} N^{\eta(1-k)-\f14}
\sum_{j=1}^\infty j^{-\f54}N^{\eta(2-j)-\f14}\\
&\lesssim k^{-\f54} N^{\eta(2-k)-\f12}.
\end{align*}
As a consequence, we arrive at
\begin{equation}\label{S4eq25a}
\sum_{\substack{
|k_1-k_2|=k\\
\text{ or }k_1+k_2=k}}
 k_1^{-\f54}N^{\eta(2-k_1)-\f14}
 k_2^{-\f54}N^{\eta(2-k_2)-\f14}
\lesssim k^{-\f54}N^{\eta(2-k)+2(\eta-\f14)}.
\end{equation}

By inserting the inequality \eqref{S4eq25a} into \eqref{4.17},
we conclude that
\begin{equation}\label{S4eq25}
II_2(t)\lesssim k^{-\f94}
N^{2\eta(2-k)+2(\eta-\f14)} E_0^\f54(t)E_1^\f14(t).
\end{equation}

Substituting \eqref{S4eq24} and \eqref{S4eq25} into \eqref{S4eq23} leads to \eqref{S4eq21}, which completes the proof of Lemma \ref{lem4.3}.
\end{proof}

\begin{lem}\label{lem4.4}
{\sl Under the assumptions of Proposition \ref{main prop}, for any $t\leq T$ and $k\geq 1$, we have
\begin{equation}\begin{aligned}\label{S4eq31}
\|\p_z\vv u_k\|_{L^\infty_t(L^2)}^2
+2\| \wt \nabla \p_z\vv u_k&\|_{L^2_t(L^2)}^2
+\f{k^2N^2}2\bigl\|\f{\p_z\vv u_k}r \bigr\|_{L^2_t(L^2)}^2\\
&\leq\|\p_z\vv u_k(0)\|_{L^2}^2 +Ck^{-\f94}N^{2\eta(2-k)+2(\eta-\f14)}
E_0^\f14(t)E_1^\f54(t).
\end{aligned}\end{equation}
}
\end{lem}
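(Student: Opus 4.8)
The plan is to run the energy argument of Lemma \ref{lem4.3} one $z$-derivative higher. I would test the systems \eqref{eqtuk} and \eqref{eqtvk} for $\nu=1$ against $-\pa_z^2u_k$ and $-\pa_z^2v_k$ respectively, which after one integration by parts in $z$ amounts to testing the $\pa_z$-differentiated systems against $\pa_z u_k$ and $\pa_z v_k$. Since $\pa_z$ commutes with all the coefficients and preserves the boundary condition $\vv u_k|_{r=1}=0$, the pressure terms $P_k,Q_k$ drop out exactly as in the derivation of \eqref{S4eq22}. The linear coupling $\f{2kN}{r^2}$ contributes only $4kN\bigl|\bigl(\f{\pa_z\vr_k}r\big|\f{\pa_z\ut_k}r\bigr)-\bigl(\f{\pa_z\ur_k}r\big|\f{\pa_z\vt_k}r\bigr)\bigr|$, which is absorbed by the good terms via $2kN\leq\f34k^2N^2+1$ for $N\geq2$. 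This yields the $\pa_z$-analogue of \eqref{S4eq23}, leaving a convection contribution $\wt{II}_1$ from the interaction with $\vv u_0$ and a source contribution $\wt{II}_2$ from $(F_k,G_k)$ to be estimated.

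For $\wt{II}_1$ I would integrate by parts in $z$ so that the two transport terms $\bigl(\wt u_k\cdot\wt\nabla\vv u_0\big|\pa_z^2u_k\bigr)$, $\bigl(\wt v_k\cdot\wt\nabla\vv u_0\big|\pa_z^2v_k\bigr)$ and the $\f{\vv u_0}r$-coupling terms become sums of products carrying two $z$-derivatives in total. Estimating each product by H\"older in the anisotropic norms, putting an un-differentiated factor in $L^4_\h(L^\infty_\v)$, a once-differentiated factor in $L^4_\h(L^2_\v)$, and $\wt\nabla\pa_z\vv u_0$ (or $\f{(\pa_z\ur_0,\pa_z\ut_0)}r$) in $L^2_\h(L^\infty_\v)$ or $L^2$, Lemma \ref{lembddE} gives a bound of order $k^{-\f52}N^{2\eta(2-k)+\eta-\f34}E_0^{\f14}(t)E_1^{\f54}(t)$. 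Relative to the bound for $II_1$ in Lemma \ref{lem4.3}, the two extra derivatives replace one factor $E_0$ by $E_1$; this term moreover decays faster in both $k$ and $N$ than the right-hand side of \eqref{S4eq31}, so it is harmless.

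The decisive term is $\wt{II}_2$, for which I would establish the $\pa_z$-analogue of \eqref{4.16}: integrating $\bigl((F_k,G_k)\big|\pa_z^2\vv u_k\bigr)$ by parts in $z$ and transferring the horizontal derivative inside $F_k,G_k$ onto $\pa_z\vv u_k$ through the divergence-free identities (as in the proof of \eqref{4.16} in Appendix \ref{appendixB}) yields
\[
\bigl|\bigl((F_k,G_k)\big|\pa_z^2\vv u_k\bigr)\bigr|
\lesssim\sum_{\substack{|k_1-k_2|=k\\ \text{or }k_1+k_2=k}}
\int_{\wt\Omega}\bigl(|\pa_z\vv u_{k_1}||\vv u_{k_2}|+|\vv u_{k_1}||\pa_z\vv u_{k_2}|\bigr)
\Bigl(|\wt\nabla\pa_z\vv u_k|+kN\bigl|\tfrac{\pa_z\vv u_k}r\bigr|\Bigr)\,rdrdz.
\]
In every product I place the un-differentiated factor in $L^4_\h(L^\infty_\v)$ and the $\pa_z$-differentiated factor in $L^4_\h(L^2_\v)$, and measure $\wt\nabla\pa_z\vv u_k+kN\f{\pa_z\vv u_k}r$ in $L^2$. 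By Lemma \ref{lembddE} the un-differentiated factor contributes $E_0^{\f14}E_1^{\f14}$, the differentiated one $E_1^{\f12}$, and the test factor $E_1^{\f12}$, so that the energy weight is exactly $E_0^{\f14}(t)E_1^{\f54}(t)$; summing the numerical coefficients over $\{|k_1-k_2|=k\}\cup\{k_1+k_2=k\}$ by the convolution estimate \eqref{S4eq25a} reproduces the factor $k^{-\f94}N^{2\eta(2-k)+2(\eta-\f14)}$, which is precisely the bound \eqref{S4eq31}.

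The only real difficulty is the bookkeeping in this last step: one must verify that after integrating by parts in $z$ each nonlinear product carries exactly two $z$-derivatives distributed as above, so that the anisotropic estimates of Lemma \ref{lembddE} apply with the correct $L^\infty_\v/L^2_\v$ splitting and the energy exponents assemble to $E_0^{\f14}E_1^{\f54}$ rather than some other combination. As in Lemma \ref{lem4.3}, it is the $(kN)^{-\f14}$ gain built into the anisotropic inequality \eqref{3.6} for each high mode that makes the double sum over $k_1,k_2$ converge and produces the decay $k^{-\f94}$.
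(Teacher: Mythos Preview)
Your approach matches the paper's almost exactly, including the use of \eqref{S4eq25a} for the convolution sum and the anisotropic splitting $L^4_\h(L^\infty_\v)\times L^4_\h(L^2_\v)\times L^2$ for the source term. There is, however, one genuine omission in your bookkeeping: when you pass from the $L^2$ estimate of Lemma~\ref{lem4.3} to the $\pa_z$-level, the operator $D_{0,t}=\pa_t+u_0\cdot\wt\nabla$ and the antisymmetric couplings $\pm\f{kN}r\ut_0$ no longer produce zero. At the undifferentiated level they cancel (this is why no $II$-term in Lemma~\ref{lem4.3} carries $\wt\nabla\vv u_k$ or $kN\f{\vv u_k}r$), but after one $\pa_z$ you pick up the commutator contributions
\[
-\bigl(\pa_z u_0\cdot\wt\nabla\vv u_k\,\big|\,\pa_z\vv u_k\bigr)
\andf
kN\Bigl(\f{\pa_z\ut_0}r\,(-\vr_k,\ut_k,-\vz_k,\ur_k,-\vt_k,\uz_k)\,\Big|\,\pa_z\vv u_k\Bigr),
\]
which are precisely the paper's $III_1$. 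These are not covered by your $\wt{II}_1$ (which is the paper's $III_2$).

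The fix is routine: place $\pa_z\vv u_0$ in $L^4_t(L^4_\h(L^2_\v))$, $\pa_z\vv u_k$ in $L^4_t(L^4_\h(L^2_\v))$, and $\wt\nabla\vv u_k$, $kN\f{\vv u_k}r$ in $L^2_t(L^2_\h(L^\infty_\v))$; Lemma~\ref{lembddE} then gives $III_1\lesssim k^{-\f94}N^{2\eta(2-k)+\eta-\f12}E_0^{\f14}E_1^{\f54}$, which is dominated by the $\wt{II}_2$ bound since $\eta-\f12\leq 2(\eta-\f14)$. So your final inequality \eqref{S4eq31} stands, but the derivation as written is incomplete.
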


\begin{proof} Similar to the derivation of \eqref{S4eq22}, for $\nu=1,$  by taking $L^2$ inner product of \eqref{eqtuk} with  $-\p_z^2u_k$
and \eqref{eqtvk} with $-\p_z^2v_k$, and using integration by parts, we find
\begin{equation}\begin{split}\label{4.21}
\f12&\f{d}{dt}\|\p_z\vv u_k\|_{L^2}^2 +\| \wt \nabla \p_z\vv u_k\|_{L^2}^2+k^2N^2\bigl\|\f{\p_z\vv u_k}r\bigr\|_{L^2}^2
+\bigl\|\f{(\p_z\ur_k,\p_z\vt_k,\p_z\vr_k,\p_z\ut_k)}r \bigr\|_{L^2}^2\\
=&4kN\bigl(\f{\p_z\vr_k}r\big|\f{\p_z\ut_k}r\bigr)
-4kN\bigl(\f{\p_z\ur_k}r\big|\f{\p_z\vt_k}r\bigr)
-\bigl(\pa_z u_0\cdot \wt\nabla \vv u_k \big|\p_z\vv u_k\bigr)\\
&-\bigl(\pa_z(\wt u_k\cdot \wt\nabla \vv u_0)
\big|\p_z u_k\bigr)
-\bigl(\pa_z(\wt v_k\cdot \wt\nabla \vv u_0) \big|\p_zv_k\bigr)
+2\bigl(\pa_z\f{\ut_0\ut_k}r\big|\p_z\ur_k\bigr)\\
&-\bigl(\pa_z\f{\ur_0\vt_k+\ut_0\vr_k}r
\big|\p_z\vt_k\bigr)
+2\bigl(\pa_z\f{\ut_0\vt_k}r|\p_z\vr_k\bigr)
-\bigl(\pa_z\f{\ur_0\ut_k+\ut_0\ur_k}r
\big|\p_z\ut_k\bigr)\\
&+kN\bigl(\f{(-\vr_k,\ut_k,-\vz_k,
\ur_k,-\vt_k,\uz_k)}r\pa_z\ut_0
\big|\p_z\vv u_k \bigr)
+\bigl(\pa_z(F_k,G_k) \big|\p_z\vv u_k\bigr).
\end{split}\end{equation}

Along the  same line to the derivation of \eqref{S4eq23}, we observe that for $N\geq2$
$$4kN\Bigl|\bigl(\f{\p_z\vr_k}r\big|\f{\p_z\ut_k}r\bigr)
-\bigl(\f{\p_z\ur_k}r\big|\f{\p_z\vt_k}r\bigr)\Bigr|
\leq\bigl(\f{3k^2N^2}4+1\bigr)
\bigl\|\f{(\p_z\ur_k,\p_z\vt_k,\p_z\vr_k,
\p_z\ut_k)}r\bigr\|_{L^2}^2.$$
By substituting the above inequality  into \eqref{4.21} and integrating the resulting inequality over $[0,t],$ we obtain
\begin{equation}\label{4.22}
\|\p_z\vv u_k\|_{L^\infty_t(L^2)}^2
+2\| \wt \nabla \p_z\vv u_k\|_{L^2_t(L^2)}^2
+\f{k^2N^2}2\|\f{\p_z\vv u_k}r \|_{L^2_t(L^2)}^2
\leq \|\pa_z\vv u_k(0)\|_{L^2}^2
+C\sum_{j=1}^3 III_j(t),
\end{equation}
where
\begin{align*}
&III_1(t)\eqdef\|\p_z\vv u_0\|_{L^4_t(L^4_{\rm h}(L^2_{\rm v}))}\|\p_z\vv u_k\|_{L^4_t(L^4_{\rm h}(L^2_{\rm v}))} \bigl(\|\wt\nabla \vv u_k\|_{L^2_t(L^2_{\rm h}(L^\oo_{\rm v}))}+kN\bigl\|\f{ \vv u_k}r\bigr\|_{L^2_t(L^2_{\rm h}(L^\oo_{\rm v}))}\bigr)\\
&III_2(t)\eqdef\|\p_z\vv u_k\|_{L^4_t(L^4_\h(L^2_\v))}
\|\vv u_k\|_{L^4_t(L^4_\h(L^\infty_\v))}
\bigl(\|\wt\nabla \p_z\vv u_0\|_{L^2_t(L^2)}
+\bigl\|\f{(\p_z\ur_0,\p_z\ut_0)}r
\bigr\|_{L^2_t(L^2)}\bigr)\\
&\qquad\qquad+\|\p_z\vv u_k\|_{L^4_t(L^4_{\rm h}(L^2_{\rm v}))}^2 \bigl(\|\wt\nabla \vv u_0\|_{L^2_t(L^2_{\rm h}(L^\oo_{\rm v}))}+\bigl\|\f{(\ur_0,\ut_0)}r\bigr\|_{L^2_t(L^2_{\rm h}(L^\oo_{\rm v}))}\bigr)\\
&III_3(t)\eqdef\int_0^t \Bigl|\bigl(\pa_z(F_k,G_k)
\big|\pa_z\vv u_k\bigr)\Bigr|\,dt'.
\end{align*}

It follows from  Lemma \ref{lembddE}  that
\begin{equation}\begin{split}\label{S4eq34}
&III_1(t)\lesssim
k^{-\f94}N^{2\eta(2-k)-\f14}N^{\eta-\f14}
E_0^\f14(t)E_1^\f54(t) \andf\\
&III_2(t)\lesssim k^{-\f52}
N^{2\eta(2-k)-\f12}N^{\eta-\f14}E_0^\f14(t)E_1^\f54(t).
\end{split}\end{equation}

While the estimate of $III_3(t)$ relies on the following inequality:
\begin{equation}\label{4.24}
\bigl|\bigl( \p_z(F_k,G_k) \big|\p_z\vv u_k\bigr)\bigr|
\lesssim\sum_{\substack{|k_1-k_2|=k\\
\text{ or }k_1+k_2=k}}
\int_{\wt\Omega}|\p_z\vv u_{k_1}||\vv u_{k_2}|
 \bigl( |\wt \nabla \p_z\vv u_k|+kN\bigl|\f{\p_z \vv u_k}r\bigr| \bigr)\,rdrdz.
\end{equation}
again the  proof of which will be postponed in Appendix \ref{appendixB}.

In view of  \eqref{4.24}, we deduce from Lemma \ref{lembddE}  that
\begin{align*}
III_3(t)&\lesssim\sum_{\substack{
|k_1-k_2|=k\\
\text{ or }k_1+k_2=k}}
\|\p_z \vv u_{k_1}\|_{L^4_t(L^4_{\rm h}(L^2_{\rm v}))}\|\vv u_{k_2}\|_{L^4_t(L^4_{\rm h}(L^\oo_{\rm v}))}\bigl(\|\wt\nabla \p_z \vv u_k\|_{L^2_t(L^2)}+kN\bigl\|\f{\p_z\vv u_k}r\bigr\|_{L^2_t(L^2)}\bigr)\\
&\lesssim k^{-1}N^{\eta(2-k)} E_0^\f14(t)E_1^\f54(t)\sum_{\substack{
|k_1-k_2|=k\\
\text{ or }k_1+k_2=k}}
 k_1^{-\f54}N^{\eta(2-k_1)-\f14}
 k_2^{-\f54}N^{\eta(2-k_2)-\f14},
\end{align*}
from which and \eqref{S4eq25a}, we infer
\begin{equation}\label{S4eq38}
\begin{aligned}
&III_3(t)
\leq  Ck^{-\f94}N^{2\eta(2-k)+2(\eta-\f14)} E_0^\f14(t)E_1^\f54(t).
\end{aligned}
\end{equation}

Substituting the inequalities \eqref{S4eq34} and \eqref{S4eq38} into \eqref{4.22} leads to \eqref{S4eq31},
which completes the proof of Lemma \ref{lem4.4}.
\end{proof}

Now we are in a position to complete the proof of Proposition \ref{main prop}.

\begin{proof}[Proof of Proposition \ref{main prop}]
By multiplying the inequality \eqref{S4eq21} by $k^2 N^{2\eta(k-2)}$ and taking supremum  for  $k\in\N^+$, and then summing up the resulting inequality with $N^{2(\f14-\eta)}\times\eqref{S4eq1},$ we obtain
$$E_0(t) \leq E_0(0)+ C\max\bigl\{N^{\eta-\f14},N^{2(\eta-\f14)} \bigr\} E_1^\f14(t)E_0^\f54(t).$$

Along the same line, by multiplying the inequality \eqref{S4eq31}  by $k^2 N^{2\eta(k-2)}$ and taking supremum  for  $k\in\N^+$, and then summing up the resulting inequality with $N^{2(\f14-\eta)}\times\eqref{S4eq11},$ we achieve
$$E_1(t)
\leq E_1(0)+ C\max\bigl\{N^{\eta-\f14},N^{2(\eta-\f14)} \bigr\} E_0^\f14(t)E_1^\f54(t).$$
This  concludes the proof of Proposition \ref{main prop}.
\end{proof}

\section{Proof of Proposition \ref{propANScyl}}\label{sec5}

In this section, we shall present the proof of Proposition \ref{propANScyl}. First,
let us introduce $u_L$ to be determined by the following anisotropic Stokes system with initial data $u_{\rm in}$ given by \eqref{initialN}:
\begin{equation}\label{5.1}
\left\{
\begin{aligned}
&\p_t u_L-\Delta_\h u_L+\nabla p=0,\quad (t,x)\in\R^+\times \Omega,\\
&\dive u=0,\\
&u_L|_{\p\Om}=0,
\quad u_L|_{t=0} =u_{\rm in}.
\end{aligned}
\right.\end{equation}
It follows from Corollary \ref{corA} in the Appendix \ref{appendixA} that $u_L$ has only frequency $N$ in the
$\theta$ variable.  Precisely, we can write $u_L$ as
\begin{align*}
u_L(t,x)=&
\bigl(\ur_L(t,r,z)\vv e_r+\ut_L(t,r,z)\vv e_\th
+\uz_L(t,r,z)\vv e_z\bigr)\cos N\th\\
&+\bigl(\vr_L(t,r,z)\vv e_r+\vt_L(t,r,z)\vv e_\th
+\vz_L(t,r,z)\vv e_z\bigr)\sin N\th.
\end{align*}
Correspondingly, the pressure function $p$ is of the following form:
$$p(t,x)=
\bar p(t,r,z)\cos N\th+\wt p(t,r,z)\sin N\th.$$
By substituting the above equalities into \eqref{5.1} and comparing the Fourier coefficients, we find
 that $(\ur_L,\vt_L,\uz_L)$ satisfies
\begin{equation}\label{5.2}
\left\{
\begin{split}
&\p_t\ur_L
-\bigl(\pa_r^2+\f{\pa_r}{r}-\frac{1+N^2}{r^2}\bigr)\ur_L
+\f{2N}{r^2}\vt_L+\pa_r\bar p=0,\quad (t,r,z)\in\R^+\times\wt\Omega,\\
&\p_t\vt_L
-\bigl(\pa_r^2+\f{\pa_r}{r}-\frac{1+N^2}{r^2}\bigr)\vt_L
+\f{2N}{r^2}\ur_L-\f Nr\bar p=0,\\
&\p_t\uz_L-\bigl(\pa_r^2+\f{\pa_r}{r}
-\f{N^2}{r^2}\bigr)\uz_L+\pa_z\bar p=0,\\
&\pa_r\ur_L+\f{\ur_L}r+\f{N}r\vt_L+\pa_z\uz_L=0,\\
&(\ur_L,\vt_L,\uz_L)|_{r=1}=0,
\quad(\ur_L,\vt_L,\uz_L)|_{t=0}
=\bigl(N^{\delta}a^r, N^{\delta-1}{b^\th},N^{\delta}a^z\bigr),
\end{split}
\right.
\end{equation}
and $(\vr_L,\ut_L,\vz_L)$ satisfies
\begin{equation}\label{5.3}
\left\{
\begin{split}
&\p_t\vr_L
-\bigl(\pa_r^2+\f{\pa_r}{r}-\frac{1+N^2}{r^2}\bigr)\vr_L
-\f{2N}{r^2}\ut_L+\pa_r\wt p=0,\quad (t,r,z)\in\R^+\times\wt\Omega,\\
&\p_t\ut_L
-\bigl(\pa_r^2+\f{\pa_r}{r}-\frac{1+N^2}{r^2}\bigr)\ut_L
-\f{2N}{r^2}\vr_L+\f Nr\wt p=0,\\
&\p_t\vz_L-\bigl(\pa_r^2+\f{\pa_r}{r}
-\f{N^2}{r^2}\bigr)\vz_L+\pa_z\wt p=0,\\
&\pa_r\vr_L+\f{\vr_L}r-\f{N}r\ut_L+\pa_z\vz_L=0,\\
&(\vr_L,\ut_L,\vz_L)|_{r=1}=0,
\quad(\vr_L,\ut_L,\vz_L)|_{t=0}
=(N^{\delta}b^r,N^{\delta-1}{a^\th},N^{\delta}b^z),
\end{split}
\right.
\end{equation}
where $\wt\Omega\eqdefa \bigl\{(r,z)\in [0,1[\times\R \bigr\}.$

Let us denote $\vv u_L\eqdefa(\ur_L,\vt_L,\uz_L,\vr_L,\ut_L,\vz_L)$ in the rest of this section.

\begin{lem}\label{lem5.1}
{\sl
For any integers $N\geq 3$ and $m\geq0$, we have
\beq\label{5.3a}\|\p_z^m \vv u_L\|_{L^\infty(\R^+;L^2)}^2
+\|\pa_r\p_z^m\vv u_L\|_{L^2(\R^+;L^2)}^2
+N^2\|r^{-1}\p_z^m \vv u_L\|_{L^2(\R^+;L^2)}^2
\lesssim N^{2\delta}\|\p_z^m \vv\al\|_{L^2}^2.\eeq
}\end{lem}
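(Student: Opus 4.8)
The plan is to run the standard $L^2$ energy estimate on the two linear subsystems \eqref{5.2} and \eqref{5.3}, taking advantage of the fact that they have constant coefficients in $z$ and no convective term. Since $\wt\Omega$ is translation invariant in $z$ and no coefficient in \eqref{5.2}--\eqref{5.3} depends on $z$, the field $\pa_z^m\vv u_L$ solves exactly the same systems, with the initial data differentiated $m$ times in $z$. Thus it suffices to carry out the computation once, for $\pa_z^m\vv u_L$ in place of $\vv u_L$.

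I would take the $L^2(r\,dr\,dz)$ inner product over $\wt\Omega$ of the first three equations of \eqref{5.2} against $\pa_z^m\ur_L$, $\pa_z^m\vt_L$ and $\pa_z^m\uz_L$, and likewise for \eqref{5.3}. Three things happen. First, the pressure terms cancel: integrating by parts in $r$ and $z$ gives $(\pa_r\bar p\,|\,\ur_L)-(\f Nr\bar p\,|\,\vt_L)+(\pa_z\bar p\,|\,\uz_L)=-(\bar p\,|\,\pa_r\ur_L+\f{\ur_L}r+\f Nr\vt_L+\pa_z\uz_L)=0$ by the divergence-free relation in \eqref{5.2}, the boundary contribution at $r=1$ vanishing by the Dirichlet condition and the one at $r=0$ by the weight $r$ (and identically for \eqref{5.3}). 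Second, the radial operators $-(\pa_r^2+\f{\pa_r}r-\f{1+N^2}{r^2})$ produce, after integration by parts, the nonnegative terms $\|\pa_r\cdot\|_{L^2}^2+(1+N^2)\|r^{-1}\cdot\|_{L^2}^2$ for the $r$- and $\theta$-components and $\|\pa_r\uz_L\|_{L^2}^2+N^2\|r^{-1}\uz_L\|_{L^2}^2$ for the $z$-component.

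The only delicate point is the zeroth-order coupling $\f{2N}{r^2}$ between $\ur_L$ and $\vt_L$ (respectively $\vr_L$ and $\ut_L$). It contributes the indefinite cross term $4N(\f{\ur_L}r\,|\,\f{\vt_L}r)$, which I would absorb by Young's inequality into $2N(\|r^{-1}\ur_L\|_{L^2}^2+\|r^{-1}\vt_L\|_{L^2}^2)$. Together with the gain $(1+N^2)\|r^{-1}\cdot\|_{L^2}^2$ from the radial part, the net coefficient of $\|r^{-1}(\ur_L,\vt_L)\|_{L^2}^2$ is $(1+N^2)-2N=(N-1)^2$, and for $N\geq 3$ one has $(N-1)^2\geq\f49 N^2$, so the surviving $r^{-1}$-dissipation is still $\gtrsim N^2\|r^{-1}(\ur_L,\vt_L)\|_{L^2}^2$. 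This is exactly where the hypothesis $N\geq 3$ is used, and it is the crux of the proof. The outcome is an energy identity of the form $\f12\f{d}{dt}\|\pa_z^m\vv u_L\|_{L^2}^2+c\bigl(\|\pa_r\pa_z^m\vv u_L\|_{L^2}^2+N^2\|r^{-1}\pa_z^m\vv u_L\|_{L^2}^2\bigr)\leq0$ for some absolute constant $c>0$, the two subsystems \eqref{5.2} and \eqref{5.3} being handled by the identical computation.

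Finally I would integrate this identity over $\R^+$ to recover all three norms on the left-hand side of \eqref{5.3a}. The right-hand side is the initial energy: by \eqref{initialN}, $\pa_z^m\vv u_L|_{t=0}$ has $r$- and $z$-components of size $N^\delta$ and swirl components of the smaller size $N^{\delta-1}$, so that $\|\pa_z^m\vv u_L|_{t=0}\|_{L^2}^2\lesssim N^{2\delta}\|\pa_z^m\vv\al\|_{L^2}^2$, which yields \eqref{5.3a}.
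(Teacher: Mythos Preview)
Your proposal is correct and follows essentially the same route as the paper: reduce to $m=0$ by linearity and $z$-translation invariance, perform the $L^2$ energy estimate on \eqref{5.2}--\eqref{5.3}, cancel the pressure via the divergence constraint, and absorb the cross term $4N(\f{\ur_L}r|\f{\vt_L}r)$ into the $N^2\|r^{-1}\cdot\|_{L^2}^2$ dissipation using $N\geq3$. The only cosmetic difference is that the paper throws away the ``$+1$'' in $(1+N^2)$ and uses $2N\leq\f34 N^2$ for $N\geq3$, whereas you keep it and use $(N-1)^2\geq\f49 N^2$; both yield the same conclusion.
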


\begin{proof}
Since the systems \eqref{5.2} and \eqref{5.3} are linear, it suffices to prove \eqref{5.3a}  for the case when $m=0$.
Observing that $\vv u_L|_{r=1}=0,$ we get, by using integration by parts and the fourth equations of \eqref{5.2} and \eqref{5.3}, that
\begin{align*}
&\int_{\wt{\Omega}}\bigl(\pa_r\bar p\ur_L-\f Nr\bar p\vt_L+\pa_z\bar p\uz_L\bigr)r\,dr\,d\theta
=-\int_{\wt{\Omega}}\bar p\bigl(\pa_r\ur_L+\f{\ur_L}r+\f{N}r\vt_L+\pa_z\uz_L\bigr)r\,dr\,d\theta=0,\\
&\int_{\wt{\Omega}}\bigl(\pa_r\wt p\vr_L+\f Nr\wt p\ut_L+\pa_z\wt p\vz_L\bigr)r\,dr\,d\theta
=-\int_{\wt{\Omega}}\wt p\bigl(\pa_r\vr_L+\f{\vr_L}r-\f{N}r\ut_L+\pa_z\vz_L\bigr)r\,dr\,d\theta=0,
\end{align*}
 and
 \begin{align*}
-\int_{\wt\Omega} \bigl(\pa_r^2+\f{\pa_r}r\bigr){\vv u}_L
\cdot {\vv u}_L\, rdrdz
=\int_{\wt\Omega}(\pa_r \vv u_L)^2\,rdrdz
=\|\pa_r\vv u_L\|_{L^2}^2.
\end{align*}
 So that we get, by taking $L^2$ inner product of \eqref{5.2} with $(\ur_L,\vt_L,\uz_L)$ and \eqref{5.3} with  $(\vr_L,\ut_L,\vz_L)$, that
\begin{equation}\label{5.4}
\begin{aligned}
\f12\f{d}{dt}\|\vv u_L\|_{L^2}^2
+\|\pa_r\vv u_L\|_{L^2}^2
+N^2\bigl\|\f{\vv u_L}r\bigr\|_{L^2}^2
&\leq4N\Bigl|\bigl(\f{\vt_L}r\big|\f{\ur_L}r\bigr)
-\bigl(\f{\vt_L}r\big|\f{\ur_L}r\bigr)\Bigr|\\
&\leq 2N\bigl\|\f{\vv u_L}r\bigr\|_{L^2}^2,
\end{aligned}
\end{equation}
Notice  that for any $N\geq 3$, there holds $2N\leq \f34N^2$. As a consequence, we deduce from
 \eqref{5.4} that
\begin{align*}
\f12\|\vv u_L\|_{L^\infty(\R^+;L^2)}^2
+\|\pa_r\vv u_L\|_{L^2(\R^+;L^2)}^2
+\f{N^2}4\|r^{-1}\vv u_L\|_{L^2(\R^+;L^2)}^2
\leq\f12\|u_{\rm in}\|_{L^2}^2,
\end{align*}
which together with \eqref{initialN} ensures \eqref{5.3a}. This completes the proof of this lemma.
\end{proof}

Let us now present the proof of Proposition \ref{propANScyl}:

\begin{proof}[Proof of Proposition \ref{propANScyl}] It follows from standard theory of 3-D anisotropic Navier-Stokes system (see
\cite{CDGGbook} for instance) that under the assumptions of Theorem \ref{thm2}, the system $(ANS)$ has a unique solution $u$ on $[0,T^\ast[$
with $T^\ast$ being the maximal time of existence for $u.$
Let us write $u=u_L+w.$  In view of $(ANS)$ and \eqref{5.1}, $w$ satisfies
\begin{equation}\label{5.5}
\left\{
\begin{split}
&\pa_t w-\D_\h w +(u_L+w)\cdot \nabla (u_L+w)
+\nabla q=0,\quad (t,x)\in\R^+\times \Omega,\\
& \dive w=0,\\
& w|_{\pa\Omega}=0 \andf w|_{t=0}=0.
\end{split}
\right.
\end{equation}
Then to prove Proposition \ref{propANScyl}, it suffices to establish  the following estimate for $w$:
\begin{equation}\begin{split}\label{estiwcyl}
\|\pa_3^\ell w\|_{L^\infty_t(L^2)}^2
&+\|\nablah\pa_3^\ell w\|_{L^2_t(L^2)}^2\\
\lesssim& N^{-4(\f14-\delta)}
\sum_{i=0}^{\max\{\ell,1\}}\|\pa_z^i\vv\al\|_{L^2}^4
+N^{-4(1-\delta)}\sum_{j=1}^{\ell+1}
\|\pa_z^j\vv\al\|_{L^2}^4,
\quad\forall\ 0\leq \ell\leq m.
\end{split}\end{equation}

To begin with, we get, by taking $L^2$ inner product of \eqref{5.5} with $w$ and using integration by parts, that
\begin{equation}\begin{split}\label{5.7}
\f12&\|w(t)\|_{L^2}^2
+\|\nablah w\|_{L^2_t(L^2)}^2\\
&=-\int_0^t\int_{\Omega}
\bigl((u_L+w)\cdot\nabla u_L\bigr)\cdot w\,dxdt'\\
&=\int_0^t\int_{\Omega}
\Bigl((u_L^\h\cdot\nablah w\bigr) u_L
-\pa_3(u_L^3 u_L)w
+\nablah\cdot(w^\h\otimes w)u_L
-(w^3\pa_3 u_L) w\Bigr)\,dxdt'\\
&\leq \|u_L\|_{L^4_t(L^4_\h(L^\infty_\v))}
\bigl(\|u_L\|_{{L^4_t(L^4_\h(L^2_\v}))}
\|\nablah w\|_{L^2_t(L^2)}
+\|\pa_3u_L\|_{L^2_t(L^2)}
\|w\|_{L^4_t(L^4_{\rm h}(L^2_{\rm v}))}\bigr)\\
&\quad+\|u_L\|_{L^4_t(L^4_\h(L^\infty_\v))}
\|w\|_{L^4_t(L^4_\h(L^2_\v))}
\|\nablah w\|_{L^2_t(L^2)}
+\|\pa_3u_L\|_{L^2_t(L^2_\h(L^\infty_\v))}
\|w\|_{L^4_t(L^4_\h(L^2_\v))}^2.
\end{split}\end{equation}

It follows from the point-wise estimate \eqref{pointwisecyl} and Lemma \ref{lem5.1} that for any $j\in\N$
\begin{equation}\begin{split}\label{5.8}
\|\pa_3^ju_L\|_{L^2_t(L^2)}
+\|\pa_3^ju_L\|_{L^2_t(L^2_\h(L^\infty_\v))}
&\lesssim\|\pa_3^ju_L\|_{L^2_t(L^2)}
+\|\pa_3^ju_L\|_{L^2_t(L^2)}^{\f12}
\|\pa_3^{j+1}u_L\|_{L^2_t(L^2)}^{\f12}\\
&\lesssim\bigl\|\f{\pa_z^ju_L}r\bigr\|_{L^2_t(L^2)}
+\bigl\|\f{\pa_z^ju_L}r\bigr\|_{L^2_t(L^2)}^{\f12}
\bigl\|\f{\pa_z^{j+1}u_L}r\bigr\|_{L^2_t(L^2)}^{\f12}\\
&\lesssim N^{-(1-\delta)}\|\pa_z^j\vv\al\|_{L^2}^{\f12}\bigl(\|\pa_z^j\vv\al\|_{L^2}^{\f12}
+
\|\pa_z^{j+1}\vv\al\|_{L^2}^{\f12}\bigr).
\end{split}\end{equation}
While by using Minkowski's inequality, \eqref{3.6} and Lemmas \ref{lem5.1}, we infer
\begin{equation}\begin{split}
&\|\pa_3^ju_L\|_{L^4_t(L^4_\h(L^2_\v))}
\leq\|\pa_z^ju_L\|_{L^4_t(L^2_\v(L^4_\h))}\\
&\lesssim\bigl(\|\p_r\pa_z^j\vv u_L\|_{L^2_t(L^2)}^\f14
+\|r^{-1}\pa_z^j\vv u_L\|_{L^2_t(L^2)}^\f14\bigr)
\|r^{-1}\pa_z^j\vv u_L\|_{L^2_t(L^2)}^\f14
\|\pa_z^j\vv u_L\|_{L^\infty_t(L^2)}^\f12\\
&\lesssim N^{-(\f14-\delta)}\|\pa_z^j\vv\al\|_{L^2},
\end{split}\end{equation}
and similarly
\begin{equation}\begin{split}\label{5.10}
\|\pa_3^ju_L\|_{L^4_t(L^4_\h(L^\infty_\v))}
\lesssim &\|\pa_3^ju_L\|_{L^4_t(L^4_\h(L^2_\v))}^{\f12}
\|\pa_3^{j+1} u_L\|_{L^4_t(L^4_\h(L^2_\v))}^{\f12}\\
\lesssim &N^{-(\f14-\delta)}\|\pa_z^j\vv\al\|_{L^2}^{\f12}
\|\pa_z^{j+1}\vv\al\|_{L^2}^{\f12}.
\end{split}\end{equation}
By substituting the estimates \eqref{5.8}-\eqref{5.10} with $j=0$ or $1$ into \eqref{5.7}, we infer
\begin{equation}\begin{split}\label{5.11}
\f12\|w\|_{L^\infty_t(L^2)}^2
+\|&\nablah w\|_{L^2_t(L^2)}^2
\lesssim N^{-2(\f14-\delta)}
\|\vv\al\|_{L^2}^{\f32}
\|\pa_z\vv\al\|_{L^2}^{\f12}
\|\nablah w\|_{L^2_t(L^2)}\\
&+N^{-(\f54-2\delta)}\|\pa_z\vv\al\|_{L^2}
\|\vv\al\|_{L^2}^{\f12}
\|\pa_z\vv\al\|_{L^2}^{\f12}
\|w\|_{L^\infty_t(L^2)}^{\f12}
\|\nablah w\|_{L^2_t(L^2)}^{\f12}\\
&+N^{-(\f14-\delta)}\|\vv\al\|_{L^2}^{\f12}
\|\pa_z\vv\al\|_{L^2}^{\f12}
\|w\|_{L^\infty_t(L^2)}^{\f12}
\|\nablah w\|_{L^2_t(L^2)}^{\f32}\\
&+N^{-(1-\delta)}
\|\pa_z\vv\al\|_{L^2}^{\f12}
\|\pa_z^2\vv\al\|_{L^2}^{\f12}
\|w\|_{L^\infty_t(L^2)}
\|\nablah w\|_{L^2_t(L^2)}.
\end{split}\end{equation}
 We observe that thanks to \eqref{smallANScyl}, the last two terms in the right-hand side of \eqref{5.11} can be absorbed by
  the terms in the left-hand side. Whereas  by using Young's inequality,  the first two terms in the right-hand side of \eqref{5.11} can be bounded
  by
$$\f14\bigl(\|w\|_{L^\infty_t(L^2)}^2
+\|\nablah w\|_{L^2_t(L^2)}^2\bigr)
+N^{-4(\f14-\delta)}
\bigl(\|\vv\al\|_{L^2}^4
+\|\pa_z\vv\al\|_{L^2}^4\bigr)
+N^{-4(1-\delta)}\|\pa_z\vv\al\|_{L^2}^4.$$
As a result, we deduce from \eqref{5.11} that
$$\|w\|_{L^\infty_t(L^2)}^2
+\|\nablah w\|_{L^2_t(L^2)}^2
\lesssim N^{-4(\f14-\delta)}
\bigl(\|\vv\al\|_{L^2}^4
+\|\pa_z\vv\al\|_{L^2}^4\bigr)
+N^{-4(1-\delta)}\|\pa_z\vv\al\|_{L^2}^4,$$
which proves \eqref{estiwcyl} for the case when $\ell=0$.

Next we assume that \eqref{estiwcyl} holds with $\ell=0,\cdots,n-1$ for some $1\leq n\leq m,$
 which together with \eqref{smallANScyl} in particular implies that
\begin{equation}\label{5.12}
\|\pa_3^\ell w\|_{L^\infty_t(L^2)}^2
+\|\nablah\pa_3^\ell w\|_{L^2_t(L^2)}^2
\lesssim \e^4,
\quad\forall\ 0\leq \ell\leq n-1.
\end{equation}
Then by induction, it suffices to prove \eqref{estiwcyl} for $\ell=n$. In order to do so, in view of
\eqref{5.5}, we   write
\begin{equation*}\label{5.13}
\pa_t\pa_3^n w-\D_\h\pa_3^n w
+\sum_{j=0}^n \pa_3^j(u_L+w)
\cdot \nabla\pa_3^{n-j}(u_L+w)
+\nabla\pa_3^nq=0.
\end{equation*}
By taking $L^2$ inner product of the above equation with $\pa_3^n w,$ we find
\begin{equation}\label{5.14}
\f12\f{d}{dt}\|\pa_3^n w\|_{L^2}^2
+\|\nablah\pa_3^n w\|_{L^2}^2
=-\int_{\Om}
\Bigl(\sum_{j=0}^n \pa_3^j(u_L+w)
\cdot \nabla\pa_3^{n-j}(u_L+w)\Bigr)\cdot\pa_3^n w\,dx
\end{equation}
Yet by using integration by parts and the divergence-free condition of $u_L, w$, we obtain
\begin{align*}
&-\sum_{j=0}^n\int_{\Om}
\bigl(\pa_3^j(u_L+w)
\cdot \nabla\pa_3^{n-j}u_L\bigr)\cdot\pa_3^n w\,dx\\
&=\sum_{j=0}^n\int_{\Om}\Bigl(
\bigl(\pa_3^ju_L^\h\cdot\nablah\pa_3^n w\bigr)\cdot\pa_3^{n-j} u_L
-\pa_3\bigl(\pa_3^ju_L^3\pa_3^{n-j} u_L\bigr)
\cdot\pa_3^n w\\
&\qquad\qquad\quad+\nablah\cdot\bigl(\pa_3^jw^\h
\otimes\pa_3^n w\bigr)\cdot\pa_3^{n-j} u_L
-\bigl(\pa_3^jw^3\pa_3^{n-j+1} u_L\bigr)\cdot\pa_3^n w\Bigr)\,dx,
\end{align*}
and
\begin{align*}
&-\sum_{j=0}^n\int_{\Om}
\bigl( \pa_3^j(u_L+w)
\cdot \nabla\pa_3^{n-j}w\bigr)\cdot\pa_3^n w\,dx
=-\sum_{j=1}^n\int_{\Om}
\bigl( \pa_3^j(u_L+w)
\cdot \nabla\pa_3^{n-j}w\bigr)\cdot\pa_3^n w\,dx\\
&=-\sum_{j=1}^n\int_{\Om}
\bigl( \pa_3^j(u_L^\h+w^\h)
\cdot \nablah\pa_3^{n-j}w
-\pa_3^{j-1}\dive_\h(u_L^\h+w^\h)
\cdot\pa_3^{n-j+1}w\bigr)\cdot\pa_3^n w\,dx\\
&=-\sum_{j=1}^n\int_{\Om}
\bigl( \pa_3^j(u_L^\h+w^\h)
\cdot \nablah\pa_3^{n-j}w
-\pa_3^{j-1}\diveh w^\h\cdot
\pa_3^{n-j+1}w\bigr)\cdot\pa_3^n w\,dx\\
&\quad-\sum_{j=0}^{n-1}\int_{\Om}
\pa_3^{j}u_L^\h\cdot\nablah
\bigl(\pa_3^{n-j}w\cdot\pa_3^n w\bigr)\,dx.
\end{align*}
By substituting the above equalities into \eqref{5.14} and then integrating the resulting equality over $[0,t],$ we achieve
\begin{equation}\label{5.15}
\f12\|\pa_3^n w\|_{L^\infty_t(L^2)}^2
+\|\nablah\pa_3^n w\|_{L^2_t(L^2)}^2
\leq\cA_1(t)+\cA_2(t),
\end{equation}
where
\begin{align*}
\cA_1(t)\eqdef&C\sum_{i=0}^{n}
\|\pa_3^iu_L\|_{L^4_t(L^4_\h(L^2_\v))}
\Bigl(\sum_{j=1}^{n}\|\pa_3^j u_L\|_{{L^4_t(L^4_\h(L^2_\v}))}
\|\nablah\pa_3^n w\|_{L^2_t(L^2)}\\
&+\sum_{j=1}^{n+1}\|\pa_3^j u_L\|_{L^2_t(L^2)}
\|\pa_3^n w\|_{L^4_t(L^4_{\rm h}(L^2_{\rm v}))}\\
&+\sum_{j=0}^{n}\bigl(\|\pa_3^j w\|_{L^4_t(L^4_\h(L^2_\v))}
\|\nablah\pa_3^n w\|_{L^2_t(L^2)}
+\|\nablah\pa_3^j w\|_{L^2_t(L^2)}
\|\pa_3^n w\|_{L^4_t(L^4_\h(L^2_\v))}\bigr)\Bigr)\\
&+C\sum_{i=1}^{n+1}
\|\pa_3^iu_L\|_{L^2_t(L^2)}
\sum_{j=0}^{n}\|\pa_3^jw\|_{L^4_t(L^4_\h(L^2_\v))}
\|\pa_3^nw\|_{L^4_t(L^4_\h(L^2_\v))},
\end{align*}
and
\begin{align*}
\cA_2(t)\eqdef C\sum_{j=1}^{n}
\|\pa_3^j w\|_{L^4_t(L^4_\h(L^2_\v))}
\Bigl(&\|\pa_3^{n-j} w\|_{L^4_t(L^4_\h(L^\infty_\v))}
\|\nablah\pa_3^n w\|_{L^2_t(L^2)}\\
&+\|\nablah\pa_3^{n-j} w\|_{L^2_t(L^2_\h(L^\infty_\v))}
\|\pa_3^n w\|_{L^4_t(L^4_\h(L^2_\v))}\Bigr).
\end{align*}
We mention that we  used the inequality $\|\pa_3^jf\|_{L^\infty_\v}\lesssim\|\pa_3^jf\|_{L^2_\v}^{\f12}
\|\pa_3^{j+1}f\|_{L^2_\v}^{\f12}$  for $j\leq n-1$  in the derivation of the estimate of $\cA_1(t)$,

It follows from  \eqref{estiwcyl} with $0\leq\ell\leq n-1$, \eqref{5.8}-\eqref{5.10},~\eqref{5.12} and \eqref{smallANScyl}, that
\begin{align*}
\cA_1(t)\leq&C\Bigl(\sum_{i=0}^{n}
N^{-2(\f14-\delta)}\|\pa_z^i\vv\al\|_{L^2}^2
+\sum_{j=1}^{n+1}N^{-2(1-\delta)}
\|\pa_z^j \vv\al\|_{L^2}^2\\
&\quad+\e^2\sum_{k=0}^{n-1}\bigl(
\|\pa_3^k w\|_{L^\infty_t(L^2)}
+\|\nablah\pa_3^k w\|_{L^2_t(L^2)}\bigr)\Bigr)
\bigl(\|\pa_3^n w\|_{L^\infty_t(L^2)}
+\|\nablah\pa_3^n w\|_{L^2_t(L^2)}\bigr)\\
&+C\Bigl(\sum_{i=0}^{n}
N^{-(\f14-\delta)}\|\pa_z^i\vv\al\|_{L^2}
+\sum_{j=1}^{n+1}N^{-(1-\delta)}
\|\pa_z^j \vv\al\|_{L^2}\Bigr)
\bigl(\|\pa_3^n w\|_{L^\infty_t(L^2)}^2
+\|\nablah\pa_3^n w\|_{L^2_t(L^2)}^2\bigr)\\
\leq&C\Bigl((1+\e^2)\bigl(\sum_{i=0}^{n}
N^{-2(\f14-\delta)}\|\pa_z^i\vv\al\|_{L^2}^2
+\sum_{j=1}^{n+1}N^{-2(1-\delta)}
\|\pa_z^j \vv\al\|_{L^2}^2\bigr)\Bigr)^2\\
&+\bigl(\f16+C\e\bigr)
\bigl(\|\pa_3^n w\|_{L^\infty_t(L^2)}^2
+\|\nablah\pa_3^n w\|_{L^2_t(L^2)}^2\bigr),
\end{align*}
and
\begin{align*}
\cA_2(t)\leq&C\e^2\sum_{k=0}^{n-1}\bigl(
\|\pa_3^k w\|_{L^\infty_t(L^2)}
+\|\nablah\pa_3^k w\|_{L^2_t(L^2)}\bigr)
\bigl(\|\pa_3^n w\|_{L^\infty_t(L^2)}
+\|\nablah\pa_3^n w\|_{L^2_t(L^2)}\bigr)\\
&+C\e^2
\bigl(\|\pa_3^n w\|_{L^\infty_t(L^2)}^2
+\|\nablah\pa_3^n w\|_{L^2_t(L^2)}^2\bigr)
+C\e
\bigl(\|\pa_3^n w\|_{L^\infty_t(L^2)}^{\f52}
+\|\nablah\pa_3^n w\|_{L^2_t(L^2)}^{\f52}\bigr)\\
\leq&C\e^2\Bigl(\sum_{i=0}^{n}
N^{-4(\f14-\delta)}\|\pa_z^i\vv\al\|_{L^2}^4
+\sum_{j=1}^{n+1}N^{-4(1-\delta)}
\|\pa_z^j \vv\al\|_{L^2}^4\Bigr)\\
&+C\e^2
\bigl(\|\pa_3^n w\|_{L^\infty_t(L^2)}^2
+\|\nablah\pa_3^n w\|_{L^2_t(L^2)}^2\bigr)
+C\e
\bigl(\|\pa_3^n w\|_{L^\infty_t(L^2)}^{\f52}
+\|\nablah\pa_3^n w\|_{L^2_t(L^2)}^{\f52}\bigr).
\end{align*}
Here we emphasize that, the term
$C\e
\bigl(\|\pa_3^n w\|_{L^\infty_t(L^2)}^{\f52}
+\|\nablah\pa_3^n w\|_{L^2_t(L^2)}^{\f52}\bigr)$ in the above estimate for $\cA_2(t)$ only appears in the case when $n=1$.

By substituting the above two estimates for $\cA_1(t)$ and $\cA_2(t)$ into \eqref{5.15}, and using the fact that $\e$ is sufficiently small, we achieve
\begin{equation}\begin{split}\label{5.16}
\|\pa_3^n w\|_{L^\infty_t(L^2)}^2
+\|\nablah\pa_3^n& w\|_{L^2_t(L^2)}^2
\leq\bigl(\|\pa_3^n w\|_{L^\infty_t(L^2)}^{\f52}
+\|\nablah\pa_3^n w\|_{L^2_t(L^2)}^{\f52}\bigr)\\
&+C_n\Bigl(\sum_{i=0}^{n}
N^{-4(\f14-\delta)}\|\pa_z^i\vv\al\|_{L^2}^4
+\sum_{j=1}^{n+1}N^{-4(1-\delta)}
\|\pa_z^j \vv\al\|_{L^2}^4\Bigr)
\end{split}\end{equation}
for some positive constant $C_n$ depending only on $n$.

On the other hand, due to $w|_{t=0}=0$,
we denote
\begin{align*}
T^\ast_n\eqdef \sup\Bigl\{t\in ]0, T^\ast[:\,\|\pa_3^n& w\|_{L^\infty_t(L^2)}^2
+\|\nablah\pa_3^n w\|_{L^2_t(L^2)}^2\\
&\leq 2C_n\Bigl(\sum_{i=0}^{n}
N^{-4(\f14-\delta)}\|\pa_z^i\vv\al\|_{L^2}^4
+\sum_{j=1}^{n+1}N^{-4(1-\delta)}
\|\pa_z^j \vv\al\|_{L^2}^4\Bigr)~ \Bigr\}.
\end{align*}
Then for any $t\leq T^\ast_n$, we deduce from  \eqref{smallANScyl} that
$$\|\pa_3^n w\|_{L^\infty_t(L^2)}^2
+\|\nablah\pa_3^n w\|_{L^2_t(L^2)}^2
\leq 2nC_n\e^4,$$
which implies that
$$\|\pa_3^n w\|_{L^\infty_t(L^2)}^{\f12}
+\|\nablah\pa_3^n w\|_{L^2_t(L^2)}^{\f12}
\leq\f13$$
provided $\e$ is sufficiently small.

Therefore we deduce from
\eqref{5.16} that for $t\leq T^\ast_n$
$$\|\pa_3^n w\|_{L^\infty_t(L^2)}^2
+\|\nablah\pa_3^n w\|_{L^2_t(L^2)}^2
\leq\f32C_n\Bigl(\sum_{i=0}^{n}
N^{-4(\f14-\delta)}\|\pa_z^i\vv\al\|_{L^2}^4
+\sum_{j=1}^{n+1}N^{-4(1-\delta)}
\|\pa_z^j \vv\al\|_{L^2}^4\Bigr),$$
which contradicts to the definition of $T_n^\ast$
unless $T_n^\ast=T^\ast=\infty$. Furthermore,for any $t>0$, there holds
$$\|\pa_3^n w\|_{L^\infty_t(L^2)}^2
+\|\nablah\pa_3^n w\|_{L^2_t(L^2)}^2
\leq 2C_n\Bigl(\sum_{i=0}^{n}
N^{-4(\f14-\delta)}\|\pa_z^i\vv\al\|_{L^2}^4
+\sum_{j=1}^{n+1}N^{-4(1-\delta)}
\|\pa_z^j \vv\al\|_{L^2}^4\Bigr),$$
which is  \eqref{estiwcyl} for the case when $\ell=n$. This finishes the proof of \eqref{estiwcyl} by induction.
Then  Proposition \ref{propANScyl} follows from \eqref{5.3a} and \eqref{estiwcyl}.
\end{proof}

\section{Proof of Proposition \ref{propANScylE}}\label{sec6}

This section is devoted to the proof of Proposition \ref{propANScylE}.
Before proceeding, we first list some estimates of $\vv u_0$ and $\vv u_k$  in
terms of the energy functional  $D_j$, which can be proved exactly along the same line to that of Lemma \ref{lembddE}.

\begin{lem}\label{lembddD}
{\sl Let $D_j(t)$ be defined by \eqref{defDj} and
$$\Theta_k\eqdef k^{-\sigma m} N^{-\min\left\{\eta(k-2),
 \left(\f12-\eta-\delta\right)m\right\}},\quad
\wt\Theta_k\eqdef k^{-\sigma(m-1)} N^{-\min\left\{\eta(k-2), \left(\f12-\eta-\delta\right)(m-1)\right\}}.$$
Then for any $k\in\N^+,$ we have
\begin{align*}
&\|\vv u_0\|_{L^4_t(L^4_{\rm h}(L^2_{\rm v}))} \lesssim N^{\eta-\f14}D_0^\f12(t), \quad
\|\p_z \vv u_0\|_{L^4_t(L^4_{\rm h}(L^2_{\rm v}))} \lesssim N^{\eta-\f14}D_1^\f12(t),\\
&\|\vv u_k\|_{L^4_t(L^4_{\rm h}(L^2_{\rm v}))} \lesssim (kN)^{-\f14}\Theta_k D_0^\f12(t),
\quad \|\p_z\vv u_k\|_{L^4_t(L^4_{\rm h}(L^2_{\rm v}))} \lesssim(kN)^{-\f14}\wt\Theta_k D_1^\f12(t),
\end{align*}
and
\begin{align*}
&\|\vv u_0\|_{L^4_t(L^4_{\rm h}(L^\infty_{\rm v}))}+\|\pa_r\vv u_0\|_{L^2_t(L^2_{\rm h}(L^\infty_{\rm v}))}+\bigl\|\f{(\ur_0,\ut_0)}r\bigr\|_{L^2_t(L^2_{\rm h}(L^\infty_{\rm v}))}\lesssim N^{\eta-\f14}D_0^\f14(t)D_1^\f14(t),\\
&\|\vv u_k\|_{L^4_t(L^4_{\rm h}(L^\infty_{\rm v}))} \lesssim (kN)^{-\f14}\Theta_k^{\f12}\wt\Theta_k^{\f12} D_0^\f14(t)D_1^\f14(t),\\
&\|\pa_r \vv u_k\|_{L^2_t(L^2_{\rm h}(L^\infty_{\rm v}))}+kN\bigl\|\f{\vv u_k}r\bigr\|_{L^2_t(L^2_{\rm h}(L^\infty_{\rm v}))}
\lesssim \Theta_k^{\f12}\wt\Theta_k^{\f12} D_0^\f14(t)D_1^\f14(t).
\end{align*}
}\end{lem}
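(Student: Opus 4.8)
The plan is to reproduce the proof of Lemma \ref{lembddE} essentially verbatim, the sole change being that the bounds on the Fourier coefficients are now read off from $D_j$ rather than from $E_j$. First I would record the elementary consequences of the definition \eqref{defDj}: for $j\in\{0,1\}$,
$$\|\p_z^j\vv u_0\|_{L^\oo_t(L^2)}+\|\pa_r\p_z^j\vv u_0\|_{L^2_t(L^2)}+\bigl\|\f{(\p_z^j\ur_0,\p_z^j\ut_0)}r\bigr\|_{L^2_t(L^2)}\lesssim N^{\eta-\f14}D_j^{\f12}(t),$$
and, for each $k\in\N^+$,
$$\|\vv u_k\|_{L^\oo_t(L^2)}+\|\pa_r\vv u_k\|_{L^2_t(L^2)}+kN\bigl\|\f{\vv u_k}r\bigr\|_{L^2_t(L^2)}\lesssim \Theta_k\,D_0^{\f12}(t),$$
$$\|\p_z\vv u_k\|_{L^\oo_t(L^2)}+\|\pa_r\p_z\vv u_k\|_{L^2_t(L^2)}+kN\bigl\|\f{\p_z\vv u_k}r\bigr\|_{L^2_t(L^2)}\lesssim \wt\Theta_k\,D_1^{\f12}(t).$$
These follow by isolating a single term inside the supremum defining $D_0$ (resp.\ $D_1$) and dividing by its weight $k^{2\sigma m}N^{2\min\{\eta(k-2),(\f12-\eta-\delta)m\}}=\Theta_k^{-2}$ (resp.\ by $\wt\Theta_k^{-2}$). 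The conceptual point is simply that the factor $k^2N^{2\eta(k-2)}$ appearing in $E_j$ has been replaced by $k^{2\sigma(m-j)}N^{2\min\{\eta(k-2),(\f12-\eta-\delta)(m-j)\}}$, so the role played by $k^{-\f54}N^{\eta(2-k)-\f14}$ in Lemma \ref{lembddE} is now played by $\Theta_k$ when $j=0$ and by $\wt\Theta_k$ when $j=1$; note also that $D_j$ controls $\pa_r$ rather than $\wt\nabla$, which is harmless since the Sobolev inequalities below use only the $\pa_r$ derivative, exactly as in \eqref{S3eq45} and \eqref{lem3.3eq4}.

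For the $L^2_{\rm v}$ estimates I would then feed these bounds into the same Sobolev inequalities as before. For $\vv u_0$ I apply the classical inequality \eqref{3.1} as in \eqref{S3eq45}, obtaining $\|\vv u_0\|_{L^4_t(L^4_{\rm h}(L^2_{\rm v}))}\lesssim\|\vv u_0\|_{L^\infty_t(L^2)}^{\f12}\bigl(\|\pa_r\vv u_0\|_{L^2_t(L^2)}^{\f12}+\bigl\|\f{(\ur_0,\ut_0)}r\bigr\|_{L^2_t(L^2)}^{\f12}\bigr)$; inserting the $D_0$-bounds yields $N^{\eta-\f14}D_0^{\f12}(t)$, and the $\p_z\vv u_0$ estimate follows identically via the $D_1$-bounds. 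For $\vv u_k$ I apply the anisotropic inequality \eqref{3.6} in the form \eqref{lem3.3eq4}; substituting the $D_0$-bounds and combining the three factors as $\Theta_k^{\f12}\Theta_k^{\f14}\Theta_k^{\f14}=\Theta_k$ gives $(kN)^{-\f14}\Theta_k D_0^{\f12}(t)$, and the $\p_z\vv u_k$ estimate is obtained the same way with $\wt\Theta_k$ and $D_1$.

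Finally, the $L^\infty_{\rm v}$ estimates are deduced from the one-dimensional interpolation \eqref{S3eq2}, $\|f\|_{L^\infty_\v}\lesssim\|f\|_{L^2_\v}^{\f12}\|\pa_zf\|_{L^2_\v}^{\f12}$, applied pointwise in the horizontal variable and followed by H\"older in $t$ and $x_\h$: each $L^\infty_{\rm v}$ norm is bounded by the geometric mean of the corresponding $L^2_{\rm v}$ norm (controlled through $D_0$) and its $\p_z$-counterpart (controlled through $D_1$). Thus $\|\vv u_0\|_{L^4_t(L^4_\h(L^\infty_\v))}\lesssim\|\vv u_0\|_{L^4_t(L^4_\h(L^2_\v))}^{\f12}\|\p_z\vv u_0\|_{L^4_t(L^4_\h(L^2_\v))}^{\f12}\lesssim N^{\eta-\f14}D_0^{\f14}(t)D_1^{\f14}(t)$, and likewise for the $\pa_r\vv u_0$ and $r^{-1}(\ur_0,\ut_0)$ terms, while for $\vv u_k$ the same interpolation produces the factor $\Theta_k^{\f12}\wt\Theta_k^{\f12}$. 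I do not anticipate any genuine obstacle here; the only thing requiring care is the bookkeeping of the two distinct weights $\Theta_k$ and $\wt\Theta_k$, which arises precisely because the $L^\infty_\v$-interpolation couples the $j=0$ and $j=1$ norms, and a function and its $\p_z$-derivative are weighted differently inside $D_0$ and $D_1$.
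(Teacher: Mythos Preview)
Your proposal is correct and matches the paper's approach exactly: the paper states that Lemma \ref{lembddD} ``can be proved exactly along the same line to that of Lemma \ref{lembddE}'' and gives no further details, so your careful transcription of that argument with the weights $\Theta_k,\wt\Theta_k$ in place of $k^{-1}N^{\eta(2-k)}$ is precisely what is intended.
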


We now present the proof of Proposition \ref{propANScylE}.

\begin{proof}[Proof of Proposition \ref{propANScylE}] Once again, we shall only present the {\it a priori}
estimates for smooth enough solutions of $(ANS).$ In what follows,
we divide the proof into three steps:

\no {\bf Step 1}. The {\it a priori} estimates for $\vv u_0$.

It is easy to observe that all the estimates in Lemmas \ref{lem4.1}-\ref{lem4.4} works for the anisotropic case,
namely with  full Laplacian there being replaced by  the horizontal Laplacian here. Precisely, in view of \eqref{eqtu0} for $\nu=0,$
 we get, by a similar derivation of \eqref{S4eq3}, that
\begin{equation}\begin{split}\label{6.1}
\|\vv u_0&\|_{L^\infty_t(L^2)}^2
+2\|\pa_r \vv u_0\|_{L^2_t(L^2)}^2
+2\bigl\|\f{(\ur_0,\ut_0)}r \bigr\|_{L^2_t(L^2)}^2\\
&\lesssim\sum_{k=1}^\infty
\|\vv u_k\|_{L^4_t(L^4_{\rm h}(L^\oo_{\rm v}))}\|\vv u_k\|_{L^4_t(L^4_{\rm h}(L^2_{\rm v}))} \Bigl(\|\wt\nabla \vv u_0\|_{L^2_t(L^2)}
+\bigl\|\f{(\ur_0,\ut_0)}r\bigr\|_{L^2_t(L^2)}\Bigr).
\end{split}\end{equation}

While by using the divergence-free condition of $u_0,$ $\pa_z u_0^z=-\pa_r\ur_0-\f{\ur_0}r,$ and the point-wise estimate \eqref{pointwisecyl},  for any integer $n\in[0,m-1],$ we find
\begin{equation}\begin{split}\label{6.2}
\|\wt\nabla\pa_z^n& \vv u_0\|_{L^2_t(L^2)}
=\|\pa_r\pa_z^n\vv u_0\|_{L^2_t(L^2)}
+\|\pa_z\pa_z^n(\ur_0,\ut_0)\|_{L^2_t(L^2)}
+\bigl\|\pa_r\pa_z^n\ur_0
+\f{\pa_z^n\ur_0}r\bigr\|_{L^2_t(L^2)}\\
&\lesssim\|\pa_r\pa_z^n\vv u_0\|_{L^2_t(L^2)}
+\bigl\|\f{\pa_z^m (\ur_0,\ut_0)}r\bigr\|
_{L^2_t(L^2)}^{\f1{m-n}}
\bigl\|\f{\pa_z^n(\ur_0,\ut_0)}r\bigr\|
_{L^2_t(L^2)}^{1-\f1{m-n}}
+\bigl\|\f{\pa_z^n\ur_0}r\bigr\|_{L^2_t(L^2)}.
\end{split}\end{equation}
Similarly for any $k\in\N^+,$ in view of the fourth equations of \eqref{eqtuk}
and \eqref{eqtvk},
one has
\begin{equation}\begin{split}\label{6.3}
&\|\wt\nabla\pa_z^n \vv u_k\|_{L^2_t(L^2)}
=\|\pa_r\pa_z^n\vv u_k\|_{L^2_t(L^2)}
+\|\pa_z\pa_z^n(\ur_k,\ut_k,\vr_k,\vt_k)\|_{L^2_t(L^2)}\\
&\quad+\bigl\|\pa_r\pa_z^n\ur_{k}+\f{\pa_z^n\ur_k}r
+kN\f{\pa_z^n\vt_{k}}r\bigr\|_{L^2_t(L^2)}
+\bigl\|\pa_r\pa_z^n\vr_{k}+\f{\pa_z^n\vr_k}r
-kN\f{\pa_z^n\ut_{k}}r\bigr\|_{L^2_t(L^2)}\\
&\lesssim\|\pa_r\pa_z^n\vv u_k\|_{L^2_t(L^2)}
+\bigl\|\f{\pa_z^m\vv u_k}r\bigr\|
_{L^2_t(L^2)}^{\f1{m-n}}
\bigl\|\f{\pa_z^n\vv u_k}r\bigr\|
_{L^2_t(L^2)}^{1-\f1{m-n}}
+kN\bigl\|\f{\pa_z^n\vv u_k}r\bigr\|_{L^2_t(L^2)}.
\end{split}\end{equation}

By substituting the estimate \eqref{6.2} with $n=0$ into \eqref{6.1} and using the interpolation inequality \eqref{S3eq2},
  we infer
  \begin{align*}
&\|\vv u_0\|_{L^\infty_t(L^2)}^2
+2\|\pa_r \vv u_0\|_{L^2_t(L^2)}^2
+2\|\f{(\ur_0,\ut_0)}r \|_{L^2_t(L^2)}^2\\
&\lesssim\sum_{k=1}^\infty
\|\vv u_k\|_{L^4_t(L^4_\h(L^2_\v))}^{\f32}
\|\pa_z\vv u_k\|_{L^4_t(L^4_\h(L^2_\v))}^{\f12}
\Bigl(\|\pa_r \vv u_0\|_{L^2_t(L^2)}\\
&\qquad\qquad\qquad
+\bigl\|\f{\pa_z^m (\ur_0,\ut_0)}r\bigr\|_{L^2_t(L^2)}^{\f1m}
\bigl\|\f{(\ur_0,\ut_0)}r\bigr\|_{L^2_t(L^2)}^{1-\f1m}
+\bigl\|\f{(\ur_0,\ut_0)}r\bigr\|_{L^2_t(L^2)}\Bigr),
\end{align*}
from which, Lemma \ref{lembddD} and the fact that
\beq \label{6.4} \Theta_k\leq k^{-\sigma m}N^\eta,\andf
\wt\Theta_k\leq k^{-\sigma(m-1)}N^\eta,
\quad\forall\ k\in\N^+,\eeq
we deduce that
\begin{align*}
&\|\vv u_0\|_{L^\infty_t(L^2)}^2
+2\|\pa_r \vv u_0\|_{L^2_t(L^2)}^2
+2\bigl\|\f{(\ur_0,\ut_0)}r \bigr\|_{L^2_t(L^2)}^2\\
&\lesssim\sum_{k=1}^{\infty}(kN)^{-\f12}
\Theta_k^{\f32}\wt\Theta_k^{\f12}
D_0^\f34 D_1^\f14
\Bigl(N^{\eta-\f14}D_0^{\f12}
+\bigl\|\f{\pa_z^m (\ur_0,\ut_0)}r\bigr\|_{L^2_t(L^2)}^{\f1m}
N^{\left(1-\f1m\right)\left(\eta-\f14\right)}D_0^{\f12\left(1-\f1{m}\right)}\Bigr)\\
&\lesssim \sum_{k=1}^{\infty}k^{-\sigma\left(2m-\f12\right)-\f12}
N^{3\left(\eta-\f14\right)}D_0^\f54 D_1^\f14\Bigl(1
+\bigl\|\f{\pa_z^m (\ur_0,\ut_0)}r\bigr\|_{L^2_t(L^2)}^{\f1m}
N^{-\f1m\left(\eta-\f14\right)}D_0^{-\f1{2m}}\Bigr)\\
&\lesssim N^{3\left(\eta-\f14\right)}D_0^\f54 D_1^\f14\Bigl(1
+\bigl\|\f{\pa_z^m (\ur_0,\ut_0)}r\bigr\|_{L^2_t(L^2)}^{\f1m}
N^{-\f1m\left(\eta-\f14\right)}D_0^{-\f1{2m}}\Bigr),
\end{align*}
where in the last step, we have used the choice of $\sigma\in\bigl]\f1{2m-3},\f12\bigr[$ so that $\sigma(2m-\f12)+\f12>2\sigma(m-\f32)+\f12>\f32$,
which ensures that the summation $\sum_{k=1}^{\infty}k^{-\sigma\left(2m-\f12\right)-\f12}$ is finite.

As a result, it comes out
\begin{equation}\begin{split}\label{6.5}
N^{2\left(\eta-\f14\right)}\Bigl(\|\vv u_0\|_{L^\infty_t(L^2)}^2
&+2\|\pa_r \vv u_0\|_{L^2_t(L^2)}^2
+2\bigl\|\f{(\ur_0,\ut_0)}r \bigr\|_{L^2_t(L^2)}^2\Bigr)\\
&\lesssim N^{\eta-\f14}D_0^\f54 D_1^\f14\Bigl(1
+\bigl\|\f{\pa_z^m (\ur_0,\ut_0)}r\bigr\|_{L^2_t(L^2)}^{\f1m}
N^{-\f1m\left(\eta-\f14\right)}D_0^{-\f1{2m}}\Bigr).
\end{split}\end{equation}

Whereas in view of \eqref{eqtu0} for $\nu=0,$ we get, by a similar derivation of \eqref{S4eq13}, that
\begin{equation}\label{6.6}
\f12\|\p_z \vv u_0\|_{L^\infty_t(L^2)}^2
+\|\pa_r \p_z\vv u_0\|_{L^2_t(L^2)}^2
+\bigl\|\f{(\p_z\ur_0,\p_z\ut_0)}r
\bigr\|_{L^2_t(L^2)}^2\leq\sum_{j=1}^3I_j(t),
\end{equation}
where $I_j, j=1,2,3,$ is given by \eqref{defI}.

By virtue of  \eqref{defI},  we deduce from ~\eqref{6.2} and Lemma \ref{lembddD} that
\begin{align*}
I_1(t)\lesssim&\|\vv u_0\|_{L^4_t(L^4_{\rm h}(L^\oo_{\rm v}))}\|\p_z\vv u_0\|_{L^4_t(L^4_{\rm h}(L^2_{\rm v}))}\\
&\times\Bigl(\|\pa_r\pa_z\vv u_0\|_{L^2_t(L^2)}
+\bigl\|\f{\pa_z^m (\ur_0,\ut_0)}r\bigr\|
_{L^2_t(L^2)}^{\f1{m-1}}
\bigl\|\f{\pa_z(\ur_0,\ut_0)}r\bigr\|
_{L^2_t(L^2)}^{1-\f1{m-1}}
+\bigl\|\f{\pa_z\ur_0}r\bigr\|_{L^2_t(L^2)}\Bigr)\\
\lesssim& N^{3\left(\eta-\f14\right)}D_0^\f14 D_1^\f54
\Bigl(1+\bigl\|\f{\pa_z^m (\ur_0,\ut_0)}r\bigr\|_{L^2_t(L^2)}^{\f1{m-1}}
N^{-\f1{m-1}\left(\eta-\f14\right)}D_1^{-\f1{2(m-1)}}\Bigr),
\end{align*}
and
\begin{align*}
I_2(t)&\lesssim\|\vv u_0\|_{L^4_t(L^4_{\rm h}(L^\oo_{\rm v}))}\|\p_z\vv u_0\|_{L^4_t(L^4_{\rm h}(L^2_{\rm v}))} \bigl\| \f{(\p_z \ur_0,\p_z \ut_0)}r\bigr\|_{L^2_t(L^2)}\\
&\lesssim N^{3(\eta-\f14)}D_0^\f14 D_1^\f54,
\end{align*}
and
\begin{align*}
I_3(t)\lesssim&\sum_{k=1}^\infty
\|\vv u_k\|_{L^4_t(L^4_\h(L^\infty_\v))}
\|\p_z\vv u_k\|_{L^4_t(L^4_\h(L^2_\v))}\\
&\times\Bigl(\|\pa_r\pa_z\vv u_0\|_{L^2_t(L^2)}
+\bigl\|\f{\pa_z^m (\ur_0,\ut_0)}r\bigr\|_{L^2_t(L^2)}^{\f1{m-1}}
\bigl\|\f{\pa_z(\ur_0,\ut_0)}r\bigr\|
_{L^2_t(L^2)}^{1-\f1{m-1}}
+\bigl\|\f{(\p_z\ur_0,\p_z\ut_0)}r\bigr\|_{L^2_t(L^2)}\Bigr)\\
\lesssim&\sum_{k=1}^{\infty}(kN)^{-\f12}
\Theta_k^{\f12}\wt\Theta_k^{\f32}
D_0^\f14 D_1^\f34
\Bigl(N^{\eta-\f14}D_1^{\f12}
+\bigl\|\f{\pa_z^m (\ur_0,\ut_0)}r\bigr\|_{L^2_t(L^2)}^{\f1{m-1}}
N^{(1-\f1{m-1})(\eta-\f14)}
D_1^{\f12(1-\f1{m-1})}\Bigr)\\
\lesssim& N^{3(\eta-\f14)}D_0^\f14 D_1^\f54
\Bigl(1+\bigl\|\f{\pa_z^m (\ur_0,\ut_0)}r\bigr\|_{L^2_t(L^2)}^{\f1{m-1}}
N^{-\f1{m-1}(\eta-\f14)}D_1^{-\f1{2(m-1)}}\Bigr),
\end{align*}
where we used \eqref{6.4} and again the fact that $\sigma\in\bigl]\f1{2m-3},\f12\bigr[$ so that  the summation $\sum_{k=1}^{\infty}k^{-\sigma\left(2m-\f32\right)-\f12}$ is finite.

By substituting the above three estimates into \eqref{6.6}, we achieve
\begin{equation}\begin{split}\label{6.7}
N^{2\left(\eta-\f14\right)}\Bigl(\|\p_z \vv u_0&\|_{L^\infty_t(L^2)}^2
+2\|\pa_r \p_z\vv u_0\|_{L^2_t(L^2)}^2
+2\bigl\|\f{(\p_z\ur_0,\p_z\ut_0)}r
\bigr\|_{L^2_t(L^2)}^2\Bigr)\\
&\lesssim N^{\eta-\f14}D_0^\f14 D_1^\f54
\Bigl(1+\bigl\|\f{\pa_z^m (\ur_0,\ut_0)}r\bigr\|_{L^2_t(L^2)}^{\f1{m-1}}
N^{-\f1{m-1}\left(\eta-\f14\right)}D_1^{-\f1{2(m-1)}}\Bigr).
\end{split}\end{equation}

\no{\bf Step 2.} The {\it a priori} estimates for $\vv u_k$.

\begin{itemize}
\item The energy estimate of $\vv u_k.$
\end{itemize}

In view of \eqref{eqtuk} and \eqref{eqtvk} for $\nu=0,$ we get, by a similar derivation of
 \eqref{S4eq23} that
\begin{equation}\label{6.8}
\|\vv u_k\|_{L^\infty_t(L^2)}^2
+2\|\pa_r \vv u_k\|_{L^2_t(L^2)}^2
+\f{k^2N^2}2\|\f{\vv u_k}r \|_{L^2_t(L^2)}^2
\leq \|\vv u_k(0)\|_{L^2}^2
+CII_1(t)+CII_2(t),
\end{equation}
where $II_j, j=1,2,$ is given by \eqref{defII}.

 Thanks to \eqref{defII}, we get, by using \eqref{6.2} and Lemma \ref{6.1}, that
\begin{equation}\begin{split}\label{6.9}
II_1(t)&\eqdef\|\vv u_k\|_{L^4_t(L^4_\h(L^2_\v))}\|\vv u_k\|_{L^4_t(L^4_\h(L^\infty_\v))}
\Bigl(\|\wt\nabla \vv u_0\|
_{L^2_t(L^2)}
+\bigl\|\f{(\ur_0,\ut_0)}r\bigr\|
_{L^2_t(L^2)}\Bigr),\\
&\lesssim(kN)^{-\f12}\Theta_k^{\f32}
\wt\Theta_k^{\f12}D_0^\f54 D_1^\f14 N^{\eta-\f14}\Bigl(1+\bigl\|\f{\pa_z^m (\ur_0,\ut_0)}r\bigr\|_{L^2_t(L^2)}^{\f1m}
N^{-\f1m(\eta-\f14)}D_0^{-\f1{2m}}\Bigr)\\
&\lesssim k^{-\f14}\Theta_k^{2}
D_0^\f54 D_1^\f14 N^{\f12(\eta-\delta-1)}\Bigl(1+\bigl\|\f{\pa_z^m (\ur_0,\ut_0)}r\bigr\|_{L^2_t(L^2)}^{\f1m}
N^{-\f1m(\eta-\f14)}D_0^{-\f1{2m}}\Bigr).
\end{split}\end{equation}
where in the last step we have used the facts: $\sigma\in\bigl]\f1{2m-3},\f12\bigr[$ and $\eta\in \left[0,\f12-\delta\right[,$ so
that
 $$k^{-\f12}N^{-\f12+\eta+\delta}\wt\Theta_k
\leq k^{-\sigma}N^{-\f12+\eta+\delta}\wt\Theta_k
\leq\Theta_k.$$

 The estimate of $II_2(t)$ is more sophisticated. We first observe from \eqref{3.7} and Lemma \ref{6.1} that
 \beq\label{6.9a}
 \begin{split}
 \|\vv u_{k_1}\|_{L^{\f2{1-2\ve}}_t(L^{\f1\ve}_\h(L^2_\v))}\lesssim&\|\vv u_{k_1}\|_{L^\infty_t(L^2)}^{2\ve}
\bigl(\|\p_r\vv u_{k_1}\|_{L^2_t(L^2)}^{\f12-\ve}+\|\f{\vv u_{k_1}}{r}\|_{L^2_t(L^2)}^{\f12-\ve}\bigr)\|\f{\vv u_{k_1}}{r}\|_{L^2_t(L^2)}^{\f12-\ve}\\
\lesssim &(k_1N)^{-\left(\f12-\ve\right)}\Theta_{k_1}D_0^{\f12},\andf\\
\|\vv u_{k_2}\|_{L^{\f1\ve}_t(L^{\f2{1-2\ve}}_\h(L^2_\v))}\lesssim&\|\vv u_{k_2}\|_{L^\infty_t(L^2)}^{1-2\ve}
\bigl(\|\p_r\vv u_{k_2}\|_{L^2_t(L^2)}^{\ve}+\|\f{\vv u_{k_2}}{r}\|_{L^2_t(L^2)}^{\ve}\bigr)\|\f{\vv u_{k_2}}{r}\|_{L^2_t(L^2)}^{\ve}\\
\lesssim &(k_2N)^{-\ve}\Theta_{k_2}D_0^{\f12}.
\end{split} \eeq
Similarly, we get, by using \eqref{S3eq2}, that
\beq\label{6.9b}
\begin{split}
\|\p_z\vv u_{k_2}\|_{L^{\f1\ve}_t(L^{\f2{1-2\ve}}_\h(L^2_\v))}
\lesssim& (k_2N)^{-\ve}\wt\Theta_{k_2}D_1^{\f12},\\
\|\vv u_{k_2}\|_{L^{\f1\ve}_t(L^{\f2{1-2\ve}}_\h(L^\infty_\v))}
 \lesssim & (k_2N)^{-\ve}\Theta_{k_2}^{\frac12}\wt\Theta_{k_2}^{\frac12} D_0^{\f14}D_1^{\f14}.
 \end{split}
\eeq

Notice that due to symmetry of the indices, $k_1,k_2,$ in \eqref{4.16}, we only consider the part of  summation where $k_1\geq k_2.$
 As a consequence, we get, by using \eqref{4.16}, ~\eqref{6.3} and Lemma \ref{6.1}, that for $\varepsilon\eqdef\f12-\sigma>0$
\begin{align*}
II_2(t)&\lesssim\sum_{\substack{
k_1+k_2=k,~k_1\geq k_2\\
\text{ or }k_1-k_2=k}}
\|\vv u_{k_1}\|_{L^{\f2{1-2\ve}}_t(L^{\f1\ve}_\h(L^2_\v))}
\|\vv u_{k_2}\|_{L^{\f1\ve}_t(L^{\f2{1-2\ve}}_\h(L^\infty_\v))} \bigl(\|\wt\nabla \vv u_k\|_{L^2_t(L^2)}
+kN\bigl\|\f{\vv u_k}r\bigr\|_{L^2_t(L^2)}\bigr)\\
&\lesssim D_0^\f54 D_1^\f14\Theta_k
\Bigl(1+\Theta^{-\f1m}_k
\bigl\|\f{\pa_z^m\vv u_k}r\bigr\|
_{L^2_t(L^2)}^{\f1m}D_0^{-\f1{2m}}\Bigr)
\sum_{\substack{
k_1+k_2=k,~k_1\geq k_2\\
\text{ or }k_1-k_2=k}}
k_1^{-\f12+\ve}k_2^{-\ve}N^{-\f12}
\Theta_{k_1}\Theta_{k_2}^{\f12}\wt\Theta_{k_2}^{\f12}.
\end{align*}

Let us first consider the case when $k_1\leq\bigl[\f{(\f12-\eta-\delta)m}\eta\bigr]+2$. In this case, $\Theta_{k_1}=k_1^{-\sigma m} N^{-\eta(k_1-2)}$
and $\Theta_{k_2}=k_2^{-\sigma m} N^{-\eta(k_2-2)}.$ Observing that both $k_1+k_2=k$ and $k_1\geq k_2$ implies $k_1\geq k/2$, so that we have
\begin{align*}
\sum_{\substack{k_1+k_2=k\\k_1\geq k_2}}
k_1^{-\f12+\ve}k_2^{-\ve}N^{-\f12}
\Theta_{k_1}\Theta_{k_2}^{\f12}\wt\Theta_{k_2}^{\f12}
&\lesssim N^{-\f12}\sum_{\substack{k_1+k_2=k\\k_1\geq k_2}}
k_1^{-\sigma m-\f12+\ve}
k_2^{-\sigma\left(m-\f12\right)-\ve}N^{\cB}\\
&\lesssim k^{-\sigma m-\f12+\ve}N^{-\f12}
\sum_{\substack{k_1+k_2=k\\k_1\geq k_2}}
k_2^{-\sigma\left(m-\f12\right)-\ve}N^{\cB},
\end{align*}
where
$$\cB\eqdef-\eta(k_1-2)-\f\eta2(k_2-2)
-\f12\min\bigl\{\eta(k_2-2), \bigl(\f12-\eta-\delta\bigr)(m-1)\bigr\}.$$
It is easy to observe that when $k_2\leq\bigl[\f{(\f12-\eta-\delta)(m-1)}\eta\bigr]+2$, there holds
$$\cB=-\eta(k_1-2)-\eta(k_2-2)=2\eta-\eta(k-2).$$
While when $k_2>\bigl[\f{(\f12-\eta-\delta)(m-1)}\eta\bigr]+2$, recalling that $k_1\geq k_2$ and $m\geq3$, we have
\begin{align*}
\cB&=-\eta(k_1-2)-\f\eta2(k_2-2)
-\f12\bigl(\f12-\eta-\delta\bigr)(m-1)\\
&=2\eta-\f\eta2k_1-\f\eta2(k-2)
-\f12\bigl(\f12-\eta-\delta\bigr)(m-1)\\
&<2\eta-\f\eta2
\Bigl(\f{(\f12-\eta-\delta)(m-1)}\eta+2\Bigr)
-\f\eta2(k-2)-\f12\bigl(\f12-\eta-\delta\bigr)(m-1)\\
&=2\eta-\f\eta2(k-2)-\f{m}2\bigl(\f12-\eta-\delta\bigr)
-\f12{(\f12-\eta-\delta)(m-2)}-\eta\\
&<2\eta-\min\bigl\{\eta(k-2), m\bigl(\f12-\eta-\delta\bigr)\bigr\}.
\end{align*}
As a consequence, we  deduce that
\begin{align*}
&\sum_{\substack{k_1+k_2=k\\~k_1\geq k_2}}
k_1^{-\f12+\ve}k_2^{-\ve}N^{-\f12}
\Theta_{k_1}\Theta_{k_2}^{\f12}\wt\Theta_{k_2}^{\f12}\\
&\lesssim k^{-\f12+\ve}N^{2(\eta-\f14)}
k^{-\sigma m}
N^{-\min\left\{\eta(k-2), m\left(\f12-\eta-\delta\right)\right\}}\sum_{k_2\in\N^+}
k_2^{-\sigma\left(m-\f12\right)-\ve}\\
&\lesssim k^{-\f12+\ve}N^{2(\eta-\f14)}
\Theta_k,
\end{align*}
where in the last step we  used the choice of $\sigma\in\bigl]\f1{2m-3},\f12\bigr[$ and $\ve=\f12-\sigma$ so that
\begin{equation}\label{6.10}
\sigma\bigl(m-\f12\bigr)+\ve=\sigma\bigl(m-\f32\bigr)+\f12>1.
\end{equation}

On the other hand, when $k_1>\bigl[\f{(\f12-\eta-\delta)m}\eta\bigr]+2$, we have $\Theta_{k_1}=k_1^{-\sigma m} N^{-\left(\f12-\eta-\delta\right)m}$, and thus
\begin{align*}
&\sum_{\substack{k_1+k_2=k\\k_1\geq k_2}}
k_1^{-\f12+\ve}k_2^{-\ve}N^{-\f12}
\Theta_{k_1}\Theta_{k_2}^{\f12}\wt\Theta_{k_2}^{\f12}\\
&\lesssim N^{-\f12}N^{-\left(\f12-\eta-\delta\right)m}
k^{-\sigma m-\f12+\ve}
\sum_{k_2\in\N^+}
k_2^{-\sigma(m-\f12)-\ve}
N^{-\min\bigl\{\eta(k_2-2), \left(\f12-\eta-\delta\right)(m-1)\bigr\}}\\
&\lesssim k^{-\f12+\ve}N^{\eta-\f12}\Theta_k.
\end{align*}

When $k_1-k_2=k$, one has $k_1\geq k+1$, and thus
$$k_1^{-\f12+\ve}\Theta_{k_1}
<k^{-\f12+\ve}\Theta_{k},$$
from which and \eqref{6.10}, we deduce that
\begin{align*}
&\sum_{k_1-k_2=k}k_1^{-\f12+\ve}k_2^{-\ve}N^{-\f12}
\Theta_{k_1}\Theta_{k_2}^{\f12}\wt\Theta_{k_2}^{\f12}\\
&\leq N^{-\f12}k^{-\f12+\ve}\Theta_{k}
\sum_{k_2\in\N^+}k_2^{-\sigma(m-\f12)-\ve}
N^{-\min\bigl\{\eta(k_2-2),\left(\f12-\eta-\delta\right)(m-1)\bigr\}}\\
&\lesssim k^{-\f12+\ve}N^{\eta-\f12}\Theta_k.
\end{align*}

By summarizing the above
 above estimates, we conclude that
\begin{equation}\label{6.11}
II_2(t)\lesssim k^{-\f12+\ve}N^{2(\eta-\f14)}
D_0^\f54 D_1^\f14\Bigl(1+\Theta^{-\f1m}_k
\bigl\|\f{\pa_z^m\vv u_k}r\bigr\|
_{L^2_t(L^2)}^{\f1m}D_0^{-\f1{2m}}\Bigr)
\Theta_k^2.
\end{equation}

By substituting the estimates \eqref{6.9} and \eqref{6.11} into \eqref{6.8}, and recalling that $\varepsilon=\f12-\sigma$, we achieve
\begin{equation}\begin{split}\label{6.12}
\Theta_k^{-2}&\Bigl(\|\vv u_k\|_{L^\infty_t(L^2)}^2
+2\|\pa_r \vv u_k\|_{L^2_t(L^2)}^2
+\f{k^2N^2}2\|\f{\vv u_k}r \|_{L^2_t(L^2)}^2\Bigr)\\
\lesssim  &\Theta_k^{-2}\|\vv u_k(0)\|_{L^2}^2+k^{-\sigma}N^{2(\eta-\f14)}
D_0^\f54 D_1^\f14\Bigl(1+\Theta^{-\f1m}_k
\bigl\|\f{\pa_z^m\vv u_k}r\bigr\|
_{L^2_t(L^2)}^{\f1m}D_0^{-\f1{2m}}\Bigr)\\
&+ k^{-\f14}D_0^\f54 D_1^\f14 N^{\f12(\eta-\delta-1)}\Bigl(1+\bigl\|\f{\pa_z^m (\ur_0,\ut_0)}r\bigr\|_{L^2_t(L^2)}^{\f1m}
N^{-\f1m(\eta-\f14)}D_0^{-\f1{2m}}\Bigr).
\end{split}\end{equation}

\begin{itemize}
\item The energy estimate of $\p_z\vv u_k.$
\end{itemize}

In view of  \eqref{eqtuk}
and \eqref{eqtvk} for $\nu=0,$
we get, by a similar derivation of \eqref{4.22}, that
\begin{equation}\label{6.13}
\|\p_z\vv u_k\|_{L^\infty_t(L^2)}^2
+2\|\pa_r \p_z\vv u_k\|_{L^2_t(L^2)}^2
+\f{k^2N^2}2\|\f{\p_z\vv u_k}r \|_{L^2_t(L^2)}^2
\leq \|\pa_z\vv u_k(0)\|_{L^2}^2
+C\sum_{j=1}^3 \wt{III}_j(t),
\end{equation}
where $\wt{III}_j, j=1,2,3,$ is similar to the $III_j$'s in \eqref{4.22}, yet which needs to be handled more subtly. Indeed we first get, by using Lemma \ref{lembddD} and the fact that $\pa_zu_0^z=-\pa_ru^r_0-\f{u^r_0}r$, that
\begin{equation}\begin{split}\label{6.14}
\wt{III}_1(t)&\eqdef\|\p_z\vv u_k\|_{L^4_t(L^4_\h(L^2_\v))}
\Bigl(kN\|\p_z\vv u_0\|_{L^4_t(L^4_\h(L^2_\v))}
\bigl\|\f{ \vv u_k}r\bigr\|_{L^2_t(L^2_{\rm h}(L^\infty_{\rm v}))}\\
&\quad+\|\p_z u_0^r\|_{L^4_t(L^4_\h(L^2_\v))}
\|\pa_r \vv u_k\|_{L^2_t(L^2_\h(L^\infty_\v))}
+\|\p_z u_0^z\|_{L^2_t(L^2_\h(L^\infty_\v))}
\|\pa_z \vv u_k\|_{L^4_t(L^4_\h(L^2_\v))}\Bigr)\\
&\lesssim(kN)^{-\f14}\wt\Theta_k D_1^{\f12} \Bigl(N^{\eta-\f14}\Theta_k^{\f12}\wt\Theta_k^{\f12} D_0^{\f14}D_1^{\f34}
+N^{\eta-\f14}(kN)^{-\f14}\wt\Theta_k D_0^{\f14}D_1^{\f34}\Bigr)\\
&\lesssim N^{\eta-\f12}k^{-\f14-\f12\sigma}
D_0^{\f14}D_1^{\f54}\wt\Theta_k^2,
\end{split}\end{equation}
where we have used the fact that $\Theta_k\leq k^{-\sigma}\wt\Theta_k$ in the last step.

Whereas it follows from \eqref{6.2} that
\begin{equation}\begin{split}\label{6.15}
\wt{III}_2(t)&\eqdef\|\p_z\vv u_k\|_{L^4_t(L^4_\h(L^2_\v))}
\|\vv u_k\|_{L^4_t(L^4_\h(L^\infty_\v))}
\bigl(\|\wt\nabla \p_z\vv u_0\|_{L^2_t(L^2)}
+\bigl\|\f{(\p_z\ur_0,\p_z\ut_0)}r
\bigr\|_{L^2_t(L^2)}\bigr)\\
&\quad+\|\p_z\vv u_k\|_{L^4_t(L^4_\h(L^2_\v))} \Bigl(\|\p_z\vv u_k\|_{L^4_t(L^4_\h(L^2_\v))}
\bigl\|\f{(\ur_0,\ut_0)}r\bigr\|
_{L^2_t(L^2_\h(L^\infty_\v))}\\
&\qquad\qquad\qquad\qquad\qquad
+\|\p_z(u_k^r,v^r_k)\|_{L^4_t(L^4_\h(L^2_\v))}
\|\pa_r\vv u_0\|_{L^2_t(L^2_\h(L^\infty_\v))}\\
&\qquad\qquad\qquad\qquad\qquad
+\|\p_z(u_k^z,v^z_k)\|_{L^2_t(L^2_\h(L^\infty_\v))}
\|\pa_z\vv u_0\|_{L^4_t(L^4_\h(L^2_\v))}\Bigr)\\
&\lesssim N^{\eta-\f14}(kN)^{-\f12}D_0^\f14 D_1^\f54 \Bigl(1+\bigl\|\f{\pa_z^m (\ur_0,\ut_0)}r\bigr\|_{L^2_t(L^2)}^{\f1{m-1}}
N^{-\f1{m-1}(\eta-\f14)}D_1^{-\f1{2(m-1)}}\Bigr)
\wt\Theta^2_k\\
&\quad+ N^{\eta-\f12}k^{-\f14-\f12\sigma}
D_0^{\f14}D_1^{\f54}\wt\Theta_k^2.
\end{split}\end{equation}

Finally in view of \eqref{4.24}, we deduce from Lemma \ref{lembddD}, \eqref{6.9a}  and \eqref{6.9b} that
\begin{align*}
&\wt{III}_3(t)\eqdef\int_0^t \Bigl|\bigl(\pa_z(F_k,G_k)
\big|\pa_z\vv u_k\bigr)\Bigr|\,dt'\\
&\lesssim\bigl(\|\wt\nabla \p_z \vv u_k\|_{L^2_t(L^2)}+kN\bigl\|\f{\p_z\vv u_k}r\bigr\|_{L^2_t(L^2)}\bigr)\\
&\qquad\times\Bigl(\sum_{\substack{
k_1+k_2=k,~k_1\geq k_2\\
\text{ or }k_1-k_2=k}}
\|\p_z \vv u_{k_1}\|
_{L^{\f2{1-2\ve}}_t(L^{\f1\ve}_\h(L^2_\v))}
\|\vv u_{k_2}\|
_{L^{\frac1\ve}_t(L^{\f2{1-2\ve}}_\h(L^\infty_\v))}\\
&\qquad\qquad+\sum_{\substack{
k_1+k_2=k,~k_1< k_2\\
\text{ or }k_2-k_1=k}}
\|\p_z \vv u_{k_1}\|
_{L^{\frac1\ve}_t(L^{\f2{1-2\ve}}_\h(L^2_\v))}
\|\vv u_{k_2}\|
_{L^{\f2{1-2\ve}}_t(L^{\f1\ve}_\h(L^\infty_\v))}\Bigr)\\
&\lesssim D_0^\f14 D_1^\f54\wt\Theta_k
\Bigl(1+\wt\Theta^{-\f1{m-1}}_k
\bigl\|\f{\pa_z^m\vv u_k}r\bigr\|
_{L^2_t(L^2)}^{\f1{m-1}}D_1^{-\f1{2(m-1)}}\Bigr)\cG,
\end{align*}
where
$$\cG\eqdef\sum_{\substack{
k_1+k_2=k,~k_1\geq k_2\\
\text{ or }k_1-k_2=k}}
k_1^{-\f12+\ve}k_2^{-\ve}N^{-\f12}
\wt\Theta_{k_1}\Theta_{k_2}^{\f12}\wt\Theta_{k_2}^{\f12}
+\sum_{\substack{
k_1+k_2=k,~k_1< k_2\\
\text{ or }k_2-k_1=k}}
k_1^{-\ve}k_2^{-\f12+\ve}N^{-\f12}
\wt\Theta_{k_1}
\Theta_{k_2}^{\f12}\wt\Theta_{k_2}^{\f12}.$$
Observing that $\Theta_k\leq k^{-\sigma}\wt\Theta_k$ again, we infer
\begin{align*}
\cG
&\leq\sum_{\substack{
k_1+k_2=k,~k_1\geq k_2\\
\text{ or }k_1-k_2=k}}
k_1^{-\f12+\ve}k_2^{-\ve-\f\sigma2}N^{-\f12}
\wt\Theta_{k_1}\wt\Theta_{k_2}
+\sum_{\substack{
k_1+k_2=k,~k_1< k_2\\
\text{ or }k_2-k_1=k}}
(k_1^{-\ve}k_2^{-\f\sigma2})k_2^{-\f12+\ve}N^{-\f12}
\wt\Theta_{k_1}\wt\Theta_{k_2}\\
&\leq2\sum_{\substack{
k_1+k_2=k,~k_1\geq k_2\\
\text{ or }k_1-k_2=k}}
k_1^{-\f12+\ve}k_2^{-\ve-\f\sigma2}N^{-\f12}
\wt\Theta_{k_1}\wt\Theta_{k_2}.
\end{align*}
Noticing that $k_1+k_2=k$ and $k_1\geq k_2$ implies $k_1\geq k/2,$ and if $k_1\leq\bigl[\f{(\f12-\eta-\delta)(m-1)}\eta\bigr]+2$, we have $\wt\Theta_{k_1}=k_1^{-\sigma(m-1)} N^{-\eta(k_1-2)}$ and $\wt\Theta_{k_2}=k_2^{-\sigma(m-1)} N^{-\eta(k_2-2)}$, so that there holds
\begin{align*}
\sum_{\substack{k_1+k_2=k\\~k_1\geq k_2}}
k_1^{-\f12+\ve}k_2^{-\ve-\f\sigma2}N^{-\f12}
\wt\Theta_{k_1}\wt\Theta_{k_2}
&\lesssim N^{-\f12}N^{\eta(2-k)+2\eta}
k^{-\sigma(m-1)-\f12+\ve}\sum_{k_2\in\N^+}
k_2^{-\sigma(m-\f12)-\ve}\\
&\lesssim k^{-\f12+\ve}N^{2(\eta-\f14)}\wt\Theta_k,
\end{align*}
where in the last step we have used \eqref{6.10} to guarantee the summation for $k_2$ is finite.

While for the case when $k_1>\bigl[\f{(\f12-\eta-\delta)(m-1)}\eta\bigr]+2$, we have $\wt\Theta_{k_1}=k_1^{-\sigma(m-1)} N^{-\left(\f12-\eta-\delta\right)(m-1)}$, and thus
\begin{align*}
&\sum_{\substack{k_1+k_2=k\\~k_1\geq k_2}}
k_1^{-\f12+\ve}k_2^{-\ve-\f\sigma2}N^{-\f12}
\wt\Theta_{k_1}\wt\Theta_{k_2}\\
&\lesssim N^{-\f12}N^{-\left(\f12-\eta-\delta\right)(m-1)}
k^{-\sigma(m-1)-\f12+\ve}
\sum_{k_2\in\N^+}
k_2^{-\sigma(m-\f12)-\ve}
N^{-\min\bigl\{\eta(k_2-2),\left(\f12-\eta-\delta\right)(m-1)\bigr\}}\\
&\lesssim k^{-\f12+\ve}N^{\eta-\f12}\wt\Theta_k.
\end{align*}

On the other hand,  when $k_1-k_2=k$, one has $k_1\geq k+1$, so that
$$k_1^{-\f12+\ve}\wt\Theta_{k_1}
<k^{-\f12+\ve}\wt\Theta_{k}.$$
As a result, we deduce that
\begin{align*}
&\sum_{k_1-k_2=k}
k_1^{-\f12+\ve}k_2^{-\ve-\f\sigma2}N^{-\f12}
\wt\Theta_{k_1}\wt\Theta_{k_2}\\
&\leq N^{-\f12}k^{-\f12+\ve}\wt\Theta_{k}
\sum_{k_2\in\N^+}
k_2^{-\sigma(m-\f12)-\ve}
N^{-\min\{\eta(k_2-2),(\f12-\eta-\delta)(m-1)\}}\\
&\lesssim k^{-\f12+\ve}N^{\eta-\f12}\wt\Theta_k.
\end{align*}

By summarizing the above  estimates, we arrive at
$$\cG\lesssim k^{-\f12+\ve}N^{2(\eta-\f14)}\wt\Theta_k,$$
and thus
\begin{equation}\begin{split}\label{6.16}
\wt{III}_3(t)&\lesssim N^{2(\eta-\f14)}k^{-\f12+\ve}D_0^\f14 D_1^\f54
\Bigl(1+\wt\Theta^{-\f1{m-1}}_k
\bigl\|\f{\pa_z^m\vv u_k}r\bigr\|
_{L^2_t(L^2)}^{\f1{m-1}}D_1^{-\f1{2(m-1)}}\Bigr)
\wt\Theta_k^2.
\end{split}\end{equation}

By substituting the estimates \eqref{6.14}-\eqref{6.16} into \eqref{6.13}, and recalling that $\varepsilon=\f12-\sigma$ and $\sigma<\f12$ so that $N^{\eta-\f12}k^{-\f14-\f\sigma2}
<N^{2(\eta-\f14)}k^{-\sigma}$, we achieve
\begin{equation}\begin{split}\label{6.17}
\wt\Theta_k^{-2}&\Bigl(\|\pa_z\vv u_k\|_{L^2}^2
+2\|\pa_r \p_z\vv u_k\|_{L^2_t(L^2)}^2
+\f{k^2N^2}2\|\f{\p_z\vv u_k}r \|_{L^2_t(L^2)}^2\Bigr)
\\
\lesssim &\wt\Theta_k^{-2}\|\pa_z\vv u_k(0)\|_{L^2}^2+N^{2(\eta-\f14)}k^{-\sigma}
D_0^\f14 D_1^\f54
\Bigl(1+\wt\Theta^{-\f1{m-1}}_k
\bigl\|\f{\pa_z^m\vv u_k}r\bigr\|
_{L^2_t(L^2)}^{\f1{m-1}}D_1^{-\f1{2(m-1)}}\Bigr)\\
&+N^{\eta-\f14}(kN)^{-\f12}
D_0^\f14 D_1^\f54
\Bigl(1+\bigl\|\f{\pa_z^m (\ur_0,\ut_0)}r\bigr\|_{L^2_t(L^2)}^{\f1{m-1}}
N^{-\f1{m-1}(\eta-\f14)}D_1^{-\f1{2(m-1)}}\Bigr).
\end{split}\end{equation}

\no{\bf Step 3.} Conclusion of the proof.

By taking supremum of $k\in\N^+$ for \eqref{6.12} and then summing up the resulting inequality with \eqref{6.5}, we find
\begin{align*}
D_0(t)\leq & D_0(0)+N^{\eta-\f14}D_0^\f54(t) D_1^\f14(t)\Bigl(1
+\bigl\|\f{\pa_z^m (\ur_0,\ut_0)}r\bigr\|_{L^2_t(L^2)}^{\f1m}
N^{-\f1m(\eta-\f14)}D_0^{-\f1{2m}}(t)\Bigr)\\
&+N^{\f12(\eta-\delta-1)}D_0^\f54(t) D_1^\f14(t) \Bigl(1+\bigl\|\f{\pa_z^m (\ur_0,\ut_0)}r\bigr\|_{L^2_t(L^2)}^{\f1m}
N^{-\f1m(\eta-\f14)}D_0^{-\f1{2m}}(t)\Bigr)\\
&+\sup_{k\in\N^+}k^{-\sigma}N^{2(\eta-\f14)}
D_0^\f54(t) D_1^\f14(t)\Bigl(1+\Theta^{-\f1m}_k
\bigl\|\f{\pa_z^m\vv u_k}r\bigr\|
_{L^2_t(L^2)}^{\f1m}D_0^{-\f1{2m}}(t)\Bigr),
\end{align*}
which together with the facts: $\Theta_k^{-\f1m}= k^{\sigma} N^{\min\{\f{\eta(k-2)}m,\f12-\eta-\delta\}}
\leq k^{\sigma} N^{\f12-\eta-\delta}$ and  $N^{\f12(\eta-\delta-1)}<N^{\eta-\f14},$ ensures  \eqref{ANScylE1}.

Along the same line, we get, by taking supremum of $k\in\N^+$ for  \eqref{6.17} and then summing up the resulting inequality with \eqref{6.7}, that
\begin{align*}
D_1(t) \leq & D_1(0)+
N^{\eta-\f14}D_0^\f14(t) D_1^\f54(t)
\Bigl(1+\bigl\|\f{\pa_z^m (\ur_0,\ut_0)}r\bigr\|_{L^2_t(L^2)}^{\f1{m-1}}
N^{-\f1{m-1}(\eta-\f14)}D_1^{-\f1{2(m-1)}}(t)\Bigr)\\
&+\sup_{k\in\N^+}k^{-\sigma}N^{2(\eta-\f14)}
D_0^\f14(t) D_1^\f54(t)
\Bigl(1+\wt\Theta^{-\f1{m-1}}_k
\bigl\|\f{\pa_z^m\vv u_k}r\bigr\|
_{L^2_t(L^2)}^{\f1{m-1}}D_1^{-\f1{2(m-1)}}(t)\Bigr).
\end{align*}
which together with the fact: $\wt\Theta_k^{-\f1{m-1}}= k^{\sigma} N^{\min\{\f{\eta(k-2)}{m-1},\f12-\eta-\delta\}}
\leq k^{\sigma} N^{\f12-\eta-\delta},$ implies \eqref{ANScylE2}.
We thus complete the proof of Proposition \ref{propANScylE}.
\end{proof}

\appendix

\section{Proof of \eqref{4.16} and \eqref{4.24}} \label{appendixB}

The goal of this subsection is to present the proof of \eqref{4.16} and \eqref{4.24}. Indeed
in view of the fourth equations of \eqref{eqtuk} and \eqref{eqtvk}, we get, by using integration by parts, that
\begin{itemize}
\item{When $k_1+k_2=k$}
\end{itemize}
\begin{align*}
\int_{\wt\Omega}(\wt u_{k_1}\cdot \wt \nabla +\f{k_2N}r \vt_{k_1})f \cdot g \,r drdz
&=\int_{\wt\Omega} \bigl(-(\wt u_{k_1}\cdot \wt \nabla g) \cdot f +\f{(k_1+k_2)N}r \vt_{k_1}f \cdot g\bigr) \,r drdz\\
&=\int_{\wt\Omega} \bigl(-(\wt u_{k_1}\cdot \wt \nabla g) \cdot f +\f{kN}r \vt_{k_1}f \cdot g\bigr) \,r drdz,
\end{align*}
and
\begin{align*}
\int_{\wt\Omega} (\wt v_{k_1}\cdot \wt \nabla -\f{k_2N}r \ut_{k_1})f \cdot g \,r drdz
&=-\int_{\wt\Omega}\bigl((\wt v_{k_1}\cdot \wt \nabla g) \cdot f +\f{(k_1+k_2)N}r \ut_{k_1}f \cdot g\bigr) \,r drdz\\
&=-\int_{\wt\Omega} \bigl((\wt v_{k_1}\cdot \wt \nabla g) \cdot f +\f{kN}r \ut_{k_1}f \cdot g\bigr) \,r drdz.
\end{align*}

\begin{itemize}
\item{When $k_1-k_2=\pm k$}
\end{itemize}
\begin{align*}
\int_{\wt\Omega} (\wt u_{k_1}\cdot \wt \nabla -\f{k_2N}r \vt_{k_1})f \cdot g \,r drdz
&=\int_{\wt\Omega} \bigl(-(\wt u_{k_1}\cdot \wt \nabla g) \cdot f +\f{(k_1-k_2)N}r \vt_{k_1}f \cdot g\bigr) \,r drdz\\
&=\int_{\wt\Omega} \bigl(-(\wt u_{k_1}\cdot \wt \nabla g) \cdot f \pm\f{kN}r \vt_{k_1}f \cdot g\bigr) \,r drdz,
\end{align*}
and
\begin{align*}
\int_{\wt\Omega} (\wt v_{k_1}\cdot \wt \nabla +\f{k_2N}r \ut_{k_1})f \cdot g \,r drdz
&=-\int_{\wt\Omega} \bigl((\wt v_{k_1}\cdot \wt \nabla g) \cdot f +\f{(k_1-k_2)N}r \ut_{k_1}f \cdot g\bigr) \,r drdz\\
&=-\int_{\wt\Omega} \bigl((\wt v_{k_1}\cdot \wt \nabla g) \cdot f \pm\f{kN}r \ut_{k_1}f \cdot g\bigr) \,r drdz.
\end{align*}

 It is obvious to observe from \eqref{eqtukq} and \eqref{eqtvkq} that we can write $\bigl( (F_k,G_k) \big|\vv u_k\bigr)$ (resp. $\bigl( (F_k,G_k) \big|\pa_z^2\vv u_k\bigr)$) as  a linear combination of the above terms  with $f=\vv u_{k_2}$ and $g=\vv u_k$ (resp. $g=\pa_z^2\vv u_k$), and some lower order terms of the form:
$$\int_{\wt\Omega}\f{\vv u_{k_1}\vv u_{k_2}}r\vv u_k\,rdrdz\quad\Bigl(\text{resp. }\int_{\wt\Omega}\f{\vv u_{k_1}\vv u_{k_2}}r\vv \pa_z^2u_k\,rdrdz\Bigr).$$
Then
\eqref{4.16} follows immediately. The proof of \eqref{4.24} needs an additional integration by parts to move one vertical derivative
 from $\pa_z^2\vv u_k$ to such terms as  $\vv u_{k_1}$ or $\vv u_{k_2}$. We skip the details here.

\section{Estimates of anisotropic Stokes system}\label{appendixA}

In this section, we consider the following anisotropic Stokes system:
\beq\label{STS}
\left\{
\begin{aligned}
&\p_t w-(\Delta_\h+\nu^2\pa_3^2)w+\nabla\mathcal{ P}= f,\quad (t,x)\in R^+\times\Omega,\\
&\dive w=0,\\
&w|_{\p\Omega}=0,
\quad w|_{t=0} =w_{\rm in},
\end{aligned}
\right. \eeq
where $\nu\geq0$ is a constant, the source term $f$ in cylindrical coordinates reads $f=f^r \vv e_r +f^\th \vv e_\th +f^z \vv e_z,$ which can be expanded in the
following Fourier series:
$$
f^\lozenge(t,x)=f^\lozenge_{0}(t,r,z)
+\sum_{k=1}^\infty\bigl(f^\lozenge_{k}(t,r,z)\cos k\th
+g^\lozenge_{k}(t,r,z)\sin k\th\bigr),$$
where $\lozenge$ can be either $r,~\th$ or $z$.
We also write the solution of \eqref{STS} $w=w^r\vv e_r+w^\theta\vv e_\th+\wz\vv e_z$  with $w^\lozenge$ being
 expanded  as
$$w^\lozenge(t,x)=w^\lozenge_{0}(t,r,z)
+\sum_{k=1}^\infty\bigl(
w^\lozenge_{k}(t,r,z)\cos k\th+\varpi^\lozenge_{k}(t,r,z)\sin k\th\bigr).$$
Correspondingly, we also write the scalar pressure function $\mathcal{P}$ as
$$\mathcal{P}(t,x)=\mathcal{P}_{0}(t,r,z)
+\sum_{k=1}^\infty\bigl(\mathcal{P}_{k}(t,r,z)\cos k\th+\mathcal{Q}_{k}(t,r,z)\sin k\th\bigr).$$

By substituting the above equalities into \eqref{STS} and comparing the Fourier coefficients of the resulting equations, we find
\begin{itemize}
\item
$(w^r_0,~\wwt_0,~\wz_0,~\mathcal{P}_0)$ satisfies
\end{itemize}
\begin{equation}\label{eqA.3}
\left\{
\begin{split}
&\p_t\wr_{0}
-\bigl(\p_r^2+\f{\p_r}r+\nu^2\p_z^2
-\f{1}{r^2}\bigr)\wr_{0}+\pa_r \mathcal{P}_{0}
=\fr_0,\\
&\p_t\wwt_{0}
-\bigl(\p_r^2+\f{\p_r}r+\nu^2\p_z^2
-\frac{1}{r^2}\bigr)\wwt_{0}
=\ft_0,\\
&\p_t\wz_{0}
-\bigl(\p_r^2+\f{\p_r}r+\nu^2\p_z^2\bigr)\wz_{0}
+\pa_z \mathcal{P}_{0}
=\fz_0,\\
& \pa_r\wr_{0}+\frac{\wr_{0}}r+\pa_z\wz_{0}=0,\quad (t,r,z)\in\R^+\times\wt\Omega,\\
&(\wr_0,\wwt_0,\wz_0)|_{r=1}=0,
\quad(\wr_0,\wwt_0,\wz_0)|_{t=0}
=(\wr_{{\rm in},0},\wwt_{{\rm in},0},\wz_{{\rm in},0}).
\end{split}
\right.
\end{equation}

\begin{itemize}
\item
 $\left(\wr_{k},~\vpt_{k},~\wz_{k},~\mathcal{P}_k\right)$ for $k\in\N^+$ satisfies
 \end{itemize}
\begin{equation}\label{eqA.4}
\left\{
\begin{split}
&\p_t\wr_{k}
-\bigl(\p_r^2+\f{\p_r}r+\nu^2\p_z^2
-\frac{1+k^2}{r^2}\bigr)\wr_{k}
+\f{2k}{r^2}\vpt_{k}+\pa_r \mathcal{P}_{k} =\fr_k,\\
&\p_t\vpt_{k}
-\bigl(\p_r^2+\f{\p_r}r+\nu^2\p_z^2
-\frac{1+k^2}{r^2}\bigr)\vpt_{k}
+\f{2k}{r^2}\wr_{k}-\f{k}r \mathcal{P}_{k} =\gt_k,\\
&\p_t\wz_{k}
-\bigl(\p_r^2+\f{\p_r}r+\nu^2\p_z^2
-\frac{k^2}{r^2}\bigr)\wz_{k}
+\pa_z \mathcal{P}_{k} =\fz_k,\\
& \pa_r\wr_{k}+\frac{\wr_k}r+\pa_z\wz_{k}
+k\f{\vpt_{k}}r=0,\quad (t,r,z)\in\R^+\times\wt\Omega,\\
&(\wr_k,\vpt_k,\wz_k)|_{r=1}=0,
\quad(\wr_k,\vpt_k,\wz_k)|_{t=0}
=(\wr_{{\rm in},k},\vpt_{{\rm in},k},\wz_{{\rm in},k}).
\end{split}
\right.
\end{equation}

\begin{itemize}
\item
 $\left(\vpr_{k},~\wwt_{k},~\vpz_{k},~\mathcal{Q}_k\right)$ for $k\in\N^+$ satisfies
 \end{itemize}
\begin{equation}\label{eqA.5}
\left\{
\begin{split}
&\p_t\vpr_{k}
-\bigl(\p_r^2+\f{\p_r}r+\nu^2\p_z^2
-\frac{1+k^2}{r^2}\bigr)\vpr_{k}
-\f{2k}{r^2}\wwt_{k}+\pa_r \mathcal{Q}_{k} =\gr_k,\\
&\p_t\wwt_{k}
-\bigl(\p_r^2+\f{\p_r}r+\nu^2\p_z^2
-\frac{1+k^2}{r^2}\bigr)\wwt_{k}
-\f{2k}{r^2}\vpr_{k}+\f{k}r \mathcal{Q}_{k} =\ft_k,\\
&\p_t\vpz_{k}
-\bigl(\p_r^2+\f{\p_r}r+\nu^2\p_z^2
-\frac{k^2}{r^2}\bigr)\vpz_{k}
+\pa_z \mathcal{Q}_{k} =\gz_k,\\
& \pa_r\vpr_{k}+\frac{\vpr_k}r+\pa_z\vpz_{k}
-k\f{\wwt_{k}}r=0,\quad (t,r,z)\in\R^+\times\wt\Omega,\\
&(\vpr_k,\wwt_k,\vpz_k)|_{r=1}=0,
\quad(\vpr_k,\wwt_k,\vpz_k)|_{t=0}
=(\vpr_{{\rm in},k},\wwt_{{\rm in},k},\vpz_{{\rm in},k}).
\end{split}
\right.
\end{equation}

Let us denote
\begin{align*}
&\vv w_0\eqdef(\wr_0,\wwt_0,\wz_0),
\quad\vv w_k\eqdef(\wr_k,\vpt_k,\wz_k,\vpr_k,\wwt_k,\vpz_k),\\
&\vv f_0\eqdef (\fr_0,\ft_0,\fz_0),
\quad\vv f_k\eqdef (\fr_k,\gt_k,\fz_k,\gr_k,\ft_k,\gz_k).
\end{align*}

We shall prove the following
estimates concerning the solutions of \eqref{eqA.3}-\eqref{eqA.5}:

\begin{prop}\label{propA}
{\sl
Let $w_{\rm in}\in L^2$ and $f\in L^1(\R^+;L^2)$. Then for any $t>0$, we have
\begin{equation}\begin{split}\label{eqA.6}
\|\vv w_0 \|_{L^\infty_t(L^2)}^2
+\|(\pa_r,\nu\pa_z) \vv w_0\|_{L^2_t(L^2)}^2
+\bigl\|\f{(\wr_0,\wwt_0)}r\bigr\|_{L^2_t(L^2)}^2
\leq \|\vv w_{{\rm in},0}\|_{L^2}^2
+2\bigl(\vv f_0|\vv w_0\bigr)_{L^2_t(L^2)},
\end{split}\end{equation}
and for $k\geq 1$
\begin{equation}\label{eqA.7}
\|\vv w_k \|_{L^\infty_t(L^2)}^2
+\|(\pa_r,\nu\pa_z)\vv w_k\|_{L^2_t(L^2)}^2
+(k-1)^2\bigl\|\f{\vv w_k}r\bigr\|_{L^2_t(L^2)}^2 \leq \|\vv w_{{\rm in},k}\|_{L^2}^2
+2\bigl(\vv f_k|\vv w_k\bigr)_{L^2_t(L^2)}.
\end{equation}
}\end{prop}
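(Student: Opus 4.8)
The plan is to exploit that the Stokes system \eqref{STS} is \emph{linear}, so each Fourier block \eqref{eqA.3}, \eqref{eqA.4}, \eqref{eqA.5} can be handled independently by a standard $L^2$ energy estimate taken in the measure $r\,dr\,dz$ on $\wt\Omega$. Throughout, integration by parts in $r$ will be justified by the Dirichlet condition $\vv w|_{r=1}=0$, which kills the boundary term at $r=1$, together with the regularity of smooth Fourier coefficients, which kills the contribution at $r=0$.

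First I would prove \eqref{eqA.6}. Taking the $L^2$ inner product of the three evolution equations in \eqref{eqA.3} with $\wr_0,\wwt_0,\wz_0$ respectively and summing, the pressure contribution $\int_{\wt\Omega}(\pa_r\mathcal{P}_0\,\wr_0+\pa_z\mathcal{P}_0\,\wz_0)\,r\,dr\,dz$ is annihilated after integration by parts by the divergence-free relation $\pa_r\wr_0+\wr_0/r+\pa_z\wz_0=0$. For the dissipation I use the identity $\pa_r^2+\pa_r/r=r^{-1}\pa_r(r\pa_r\,\cdot)$, so that $-\int_{\wt\Omega}(\pa_r^2+\pa_r/r)f\,f\,r\,dr\,dz=\|\pa_r f\|_{L^2}^2$, while $-\nu^2\pa_z^2$ contributes $\nu^2\|\pa_z f\|_{L^2}^2$ and the zeroth-order coefficient $-(-1/r^2)$ present in the $\wr_0,\wwt_0$ equations yields $\|\wr_0/r\|_{L^2}^2+\|\wwt_0/r\|_{L^2}^2$. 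This produces the energy identity with $\|(\pa_r,\nu\pa_z)\vv w_0\|^2$ and $\|(\wr_0,\wwt_0)/r\|^2$ on the left; integrating in time while keeping $(\vv f_0|\vv w_0)$ on the right gives \eqref{eqA.6}.

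Next, for \eqref{eqA.7} with $k\geq1$, the cosine block $(\wr_k,\vpt_k,\wz_k)$ governed by \eqref{eqA.4} and the sine block $(\vpr_k,\wwt_k,\vpz_k)$ governed by \eqref{eqA.5} decouple, and I treat them symmetrically. Testing \eqref{eqA.4} with $(\wr_k,\vpt_k,\wz_k)$, the pressure term $\int_{\wt\Omega}(\pa_r\mathcal{P}_k\,\wr_k-\frac{k}{r}\mathcal{P}_k\,\vpt_k+\pa_z\mathcal{P}_k\,\wz_k)\,r\,dr\,dz$ cancels against the divergence constraint $\pa_r\wr_k+\wr_k/r+\pa_z\wz_k+k\vpt_k/r=0$, and the diffusion again yields $\|(\pa_r,\nu\pa_z)\vv w_k\|^2$ together with the weighted terms $(1+k^2)(\|\wr_k/r\|^2+\|\vpt_k/r\|^2)+k^2\|\wz_k/r\|^2$. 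The off-diagonal coupling $\frac{2k}{r^2}\vpt_k$ and $\frac{2k}{r^2}\wr_k$ contributes $4k(\wr_k/r\,|\,\vpt_k/r)$.

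The crux, and the only genuine obstacle, is to absorb this coupling so as to recover exactly the weight $(k-1)^2$. By Cauchy--Schwarz, $4k\,|(\wr_k/r\,|\,\vpt_k/r)|\leq 2k(\|\wr_k/r\|^2+\|\vpt_k/r\|^2)$, whence $(1+k^2)(\|\wr_k/r\|^2+\|\vpt_k/r\|^2)+4k(\wr_k/r\,|\,\vpt_k/r)\geq(k-1)^2(\|\wr_k/r\|^2+\|\vpt_k/r\|^2)$; combined with $k^2\|\wz_k/r\|^2\geq(k-1)^2\|\wz_k/r\|^2$ this furnishes the weight $(k-1)^2\|\cdot/r\|^2$ for the cosine block. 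The sine block is identical: there the coupling carries the opposite sign, producing $-4k(\vpr_k/r\,|\,\wwt_k/r)$, to which the same Cauchy--Schwarz bound applies, and the pressure $\mathcal{Q}_k$ is annihilated by the sine divergence relation $\pa_r\vpr_k+\vpr_k/r+\pa_z\vpz_k-k\wwt_k/r=0$. Summing the two blocks and integrating over $[0,t]$, again retaining the source term $(\vv f_k|\vv w_k)$ on the right, yields \eqref{eqA.7}. Everything outside the $1/r^2$-coupling step is routine.
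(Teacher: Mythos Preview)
Your proof is correct and follows essentially the same route as the paper: testing each Fourier block against itself in $L^2(r\,dr\,dz)$, killing the pressure via the block-wise divergence constraints, and absorbing the $4k(\cdot/r\,|\,\cdot/r)$ cross term by Cauchy--Schwarz to land on the coefficient $(1+k^2)-2k=(k-1)^2$. The only cosmetic difference is that you treat the cosine and sine blocks separately before summing, while the paper sums them first in \eqref{eqA.9} and then bounds the combined coupling; the arithmetic is identical.
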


\begin{proof} For simplicity, we just present the {\it a priori} estimates for smooth enough solutions of \eqref{eqA.3}-\eqref{eqA.5}.
Due to the homogeneous Dirichlet boundary conditions for $\vv w_0, \vv w_k,$  and the fourth equation of \eqref{eqA.3}-\eqref{eqA.5}, we get, by using integration by parts, that
\begin{align*}
&\int_{\wt\Omega}\bigl( \wr_0 \p_r \mathcal{P}_0
+\wz_0 \p_z \mathcal{P}_0\bigr)\,rdrdz
=-\int_{\wt\Omega}\bigl( \pa_r\wr_0+\frac{\wr_0}r
+\pa_z\wz_0\bigr)\mathcal{P}_0\,rdrdz=0,\\
&
\int_{\wt\Omega}\bigl(\wr_k\p_r
\mathcal{P}_k-\f{k}r\vpt_k \mathcal{P}_k
+\wz_k \p_z \mathcal{P}_k\bigr)\,rdrdz
=-\int_{\wt\Omega}\bigl( \pa_r\wr_{k}+\frac{\wr_k}r+\f{k}r \vpt_k+\pa_z\wz_{k}\bigr)\mathcal{P}_k\,rdrdz=0,\\
&
\int_{\wt\Omega}\bigl( \vpr_k \p_r \mathcal{Q}_k+\f{k}r \wwt_k \mathcal{Q}_k+\vpz_k \p_z\mathcal{Q}_k\bigr)\,rdrdz=-\int_{\wt\Omega}\bigl( \pa_r\vpr_{k}+\frac{\vpr_k}r-\f{k}r \wwt_k+\pa_z\vpz_{k}\bigr)\mathcal{Q}_k\,rdrdz=0.
\end{align*}
So that by taking $L^2$ inner product of \eqref{eqA.3} with $\vv w_0$ and using integration by parts, we find
\begin{equation}\label{eqA.8}
\f12\f{d}{dt}\|\vv w_0 \|_{L^2}^2
+\|\pa_r \vv w_0\|_{L^2}^2
+\nu^2\|\pa_z \vv w_0\|_{L^2}^2
+\bigl\|\f{(\wr_0,\wwt_0)}r\bigr\|_{L^2}^2 =\bigl(\vv f_0 |\vv w_0\bigr)_{L^2}.
\end{equation}
Whereas by taking $L^2$ inner product of \eqref{eqA.4} with $(\wr_k,\vpt_k,\wz_k)$   and \eqref{eqA.5} with $(\vpr_k,\wwt_k,\vpz_k)$, we obtain
\begin{equation}\label{eqA.9}
\begin{aligned}
\f12\f{d}{dt}\|\vv w_k\|_{L^2}^2
+\|\pa_r \vv w_k\|_{L^2}^2
&+\nu^2\|\pa_z \vv w_k\|_{L^2}^2
+k^2\bigl\|\f{\vv w_k}r\bigr\|_{L^2}^2
+\bigl\|\f{(\wr_k,\wwt_k,\vpt_k,\vpr_k)}r\bigr\|_{L^2}^2\\
&\qquad\qquad=4k\bigl(\f{\vpr_k}r |\f{\wwt_k}r\bigr)_{L^2}
-4k\bigl(\f{\wr_k}r |\f{\vpt_k}r\bigr)_{L^2}
+\bigl(\vv f_k |\vv w_k\bigr)_{L^2}.
\end{aligned}\end{equation}
Observing that
$$
4k\bigl|\bigl(\f{\vpr_k}r |\f{\wwt_k}r\bigr)_{L^2}
-\bigl(\f{\wr_k}r |\f{\vpt_k}r\bigr)_{L^2}\bigr|
\leq 2k\bigl\|\f{(\wr_k,\wwt_k,\vpt_k,\vpr_k)}r
\bigr\|_{L^2}^2.
$$
By inserting the above inequality into \eqref{eqA.9}, we arrive at
\begin{equation}\label{eqA.10}
\f12\f{d}{dt}\|\vv w_k\|_{L^2}^2
+\|\pa_r\vv w_k\|_{L^2}^2
+\nu^2\|\pa_z\vv w_k\|_{L^2}^2
+(k-1)^2\bigl\|\f{\vv w_k}r\bigr\|_{L^2}^2
\leq\bigl(\vv f_k |\vv w_k\bigr)_{L^2}.
\end{equation}

By integrating \eqref{eqA.8} and \eqref{eqA.10} over $[0,t],$ we achieve \eqref{eqA.6} and \eqref{eqA.7}. This completes the proof of  Proposition
\ref{propA}.
\end{proof}

As a direct consequence of Proposition \ref{propA}, one has
\begin{cor}\label{corA}
{\sl
For any given $k_0\in\N$, if $\vv f_k=\vv w_{{\rm in},k}=0$ for all $k\neq k_0$, then the solution $w$ to \eqref{STS}  contains only $k_0-$th Fourier mode in the $\th$ variable, i.e. $\vv w_k=0$ for all $k\neq k_0$.
}
\end{cor}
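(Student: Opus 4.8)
The plan is to exploit the fact that the Stokes system \eqref{STS} is \emph{linear}, so that after projecting onto Fourier modes in $\th$ the resulting systems \eqref{eqA.3}--\eqref{eqA.5} decouple completely across different values of $k$. First I would observe that the $k$-th Fourier coefficient $\vv w_k$ of the solution $w$ solves exactly the system \eqref{eqA.3} (for $k=0$), or \eqref{eqA.4}--\eqref{eqA.5} (for $k\geq 1$), whose source term is the $k$-th Fourier coefficient $\vv f_k$ of $f$ and whose initial datum is $\vv w_{{\rm in},k}$; crucially, none of these systems involves the coefficients of any other mode $k'\neq k$. This is the whole structural point: the quadratic interaction that in the nonlinear setting \eqref{eqtukq}--\eqref{eqtvkq} couples the modes is absent here, so each mode evolves on its own.

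With this decoupling in hand, I would fix any $k\neq k_0$. By hypothesis $\vv f_k=0$ and $\vv w_{{\rm in},k}=0$, so the right-hand side of the corresponding estimate in Proposition \ref{propA} vanishes identically: \eqref{eqA.6} yields $\|\vv w_0\|_{L^\infty_t(L^2)}^2\leq 0$ in the case $k_0\neq 0$, while \eqref{eqA.7} yields $\|\vv w_k\|_{L^\infty_t(L^2)}^2\leq 0$ for every $k\geq 1$ with $k\neq k_0$. In either case this forces $\vv w_k\equiv 0$ for all $t>0$, which is precisely the assertion $\vv w_k=0$ for $k\neq k_0$. Summing over $k$ then shows $w$ carries only its $k_0$-th Fourier mode in $\th$.

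There is essentially no hard analytic step here; the statement is a clean corollary of the a priori estimate, and I expect the only points needing a word of care to be two minor ones. The first is the degeneracy of the coefficient $(k-1)^2$ at $k=1$ in \eqref{eqA.7}: although the weighted term $\|r^{-1}\vv w_k\|_{L^2_t(L^2)}^2$ drops out when $k=1$, the left-hand side still retains the full $\|\vv w_k\|_{L^\infty_t(L^2)}^2$, which by itself forces $\vv w_1=0$ once the data vanish, so the conclusion is unaffected. The second is implicit: one needs the given solution to be regular enough for the integrations by parts underlying Proposition \ref{propA} to be legitimate and for its Fourier coefficients to genuinely satisfy the mode-wise systems. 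This is guaranteed by the standard well-posedness theory for the anisotropic Stokes system already invoked in the text, so I would cite it rather than reprove it.
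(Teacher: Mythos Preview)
Your proposal is correct and is exactly the argument the paper intends: the corollary is stated as ``a direct consequence of Proposition \ref{propA}'' with no further proof, and you have spelled out that direct consequence by applying \eqref{eqA.6}--\eqref{eqA.7} to each decoupled mode $k\neq k_0$ with vanishing data and source. Your remarks on the $k=1$ degeneracy and the regularity needed for the integrations by parts are appropriate caveats but do not change the substance.
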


\noindent\textbf{Acknowledgments}
Y. Liu is supported by NSF of China under grant 12101053.
Ping Zhang is supported by National Key R$\&$D Program of China under grant
  2021YFA1000800 and K. C. Wong Education Foundation.
 He is also partially supported by National Natural Science Foundation of China under Grants  12288201 and 12031006.\medskip

\noindent\textbf{Declarations}\medskip

\noindent\textbf{Conflict of interest} The authors declared that they have no conflicts of interest to this work.\medskip

\noindent\textbf{Publisher’s Note} Springer Nature remains neutral with regard to jurisdictional claims
in published maps and institutional affiliations.
\medskip

\noindent Springer Nature or its licensor (e.g. a society or other partner) holds exclusive rights
to this article under a publishing agreement with the author(s) or other rightsholder(s);
author self-archiving of the accepted manuscript version of this article is solely governed
by the terms of such publishing agreement and applicable law.

\end{document}